\theoremstyle{plain}
 \newtheorem{thm}{Theorem}[section]
 \newtheorem{cor}{Corollary}[section]
 \newtheorem{lem}{Lemma}[section]
 \newtheorem{prop}{Proposition}[section]
 \newtheorem{rem}{Remark}[section]
 \newtheorem{ex}{Example}[section]
 \newtheorem*{con*}{Conjecture}
  \numberwithin{equation}{section}
\DeclareMathOperator{\diag}{diag}
\DeclareMathOperator{\spa}{span}
\newcommand{\Hom}{\mathrm{Hom}}
\newcommand{\End}{\mathrm{End}}
\newcommand{\ses}[3]{0\rightarrow #1\rightarrow #2\rightarrow#3\rightarrow 0}
\newcommand{\Fu}{\mathfrak{F}}
\newcommand{\Fp}{\mathrm{F}^+}
\newcommand{\Fm}{\mathrm{F}^-}
\newcommand{\Ft}{\mathfrak{U}}
\newcommand\Po{\mathcal{S}}
\newcommand\Pop{{\mathcal{S}^{op}}}
\newcommand\Poe{\widehat{\Po}}
\newcommand\Q[1]{Q({#1})}
\newcommand\Gr{\mathrm{Gr}}
\newcommand\Mi[1]{C_{#1}}
\newcommand\Mir[1]{{C^{\circ}_{#1}}}
\newcommand\Ti[1]{\widehat{C_{#1}}}
\newcommand\Mp[1]{\mathbb M_{#1}}
\newcommand\Ma{\mathbb M_n}
\newcommand\dimv{\mathbf{dim}\,}
\newcommand\cdn{\mathbf{cdn}\,}
\newcommand\V{\mathbf{V}}
\newcommand\W{\mathbf{W}}
\newcommand\U{\mathbf{U}}
\newcommand\Cs{S}
\newcommand\Ct{T}
\newcommand\alv{\mathbf{\boldsymbol \alpha}}
\newcommand\bev{\mathbf{\boldsymbol \beta}}
\newcommand\chv{\mathbf{\boldsymbol \chi}}
\newcommand\thv{\mathbf{\boldsymbol \theta}}
\newcommand\Df[1]{d_{#1}}
\newcommand\Bf[1]{b_{#1}}
\newcommand\cats[1]{\mathfrak{sp}_{#1}}
\newcommand\core[1]{\mathrm{core}(#1)}
\newcommand\ucats[2]{\mathfrak{usp}_{{#1},{#2}}}
\newcommand\R[2]{\mathbf{R}_{#2,#1}}
\newcommand\M[2]{\mathcal{M}_{#1,#2}}
\newcommand\Su[1]{K_{#1}}
\newcommand\GL{\mathbf{Gl}}
\newcommand\SL{\mathbf{Sl}}
\newcommand\Ima{\mathrm{Im}\,}
\newcommand\supp{\mathbf{supp}\,}
\newcommand\Zn{\mathbb{Z}}
\newcommand\Cn{\mathbb{C}}
\newcommand\F{\mathbb{F}}
\newcommand\Cox[1]{\mathrm{Cox}_{#1}}
\newcommand\Coxs[1]{\widehat{\mathrm{Cox}}_{#1}}
\newcommand\Refo[1]{r^{0}_{#1}}
\newcommand\Refom[1]{\widehat{r^{0}_{#1}}}
\newcommand\Refl[1]{r^{*}_{#1}}
\newcommand\Reflm[1]{\widehat{r^{*}_{#1}}}
\newcommand\Refu[1]{r_{#1}}
\newcommand\Split[1]{\mathcal L(#1)}
\newcommand\SSum[2]{\Delta_{#1}(#2)}
\newcommand\Gale[1]{\Gamma(#1)}
\begin{document}

\title{Stable representations of posets}

\author{Vyacheslav Futorny}
\address[Vyacheslav Futorny]{Departamento de Matem\'{a}tica Univ. de S\~ao Paulo, Caixa Postal 66281, S\~ao Paulo, SP 05315-970 -- Brazil}
\email{futorny@ime.usp.br}

\author{Kostiantyn Iusenko}
\address[Kostiantyn Iusenko]{Departamento de Matem\'{a}tica Univ. de S\~ao Paulo, Caixa Postal 66281, S\~ao Paulo, SP 05315-970 -- Brazil}
\email{iusenko@ime.usp.br}

\begin{abstract} 
The purpose of this paper is to study stable representations of partially ordered sets (posets) and compare it to the well known theory for quivers. In particular, we prove that every indecomposable representation of a poset of finite representation type is stable with respect to some weight and construct that weight explicitly in terms of the dimension vector. We show that if a poset is primitive then Coxeter transformations preserve stable representations. When the base field is the field of complex numbers we establish the connection between the polystable representations and the unitary $\chv$-representations of posets. This connection explains the similarity of the results obtained in the series of papers.
\end{abstract}

\maketitle

\section*{Introduction}

Representation theory of finite dimensional algebras turned into a vast field of study in the last 40-50 years. It was observed that the subject can be approached combinatorially via representations of posets (due to  L.A.~Nazarova and A.V.~Roiter) and representations of quivers (due to P. Gabriel).
Despite of certain similarities representations of quivers and posets have significant differences. For instance: the category of representation of given quiver is abelian, while the category of representations of given poset is additive; the global dimension of the category of representations of a given quiver is at most one while it can be arbitrary for the posets; the variety of representations of a fixed dimension of a quiver   is affine  while it is projective in the case of posets; etc.

The problem of classifying  representations of ``most'' algebras is wild in a sense that it  is as difficult as the problem of classifying  representations of free algebras, or of any wild quiver (or poset). Nevertheless, one can use geometrical approach (following the ideas of D.~Mumford, e.g. \cite{mfk}) by considering the spaces whose points correspond naturally to isomorphism classes of representations. This is how A.~King in \cite{King} defined the moduli spaces of finite dimensional algebras and quivers (we refer to \cite{reineke} for exhaustive survey of this subject). 

The concept of stability for an arbitrary abelian category was developed by Rudakov in \cite{Rud}, where the author formalized the stability conditions for representations theory of quivers (due to Schofield \cite{SC91} and King \cite{King}). In particular, the existence of  Harder-Narasimhan and Jordan-H\"{o}lder filtrations was proved in a  general setting and different definitions of stability were compared. The result were applied in the case of algebraic coherent sheaves on a projective variety. Later on Futorny, Jardim and Moura introduced in \cite {FJM08} the concept of a moduli space of (semi)stable objects in an abelian category which has the structure of a projective algebraic variety, and applied this new concept to several important abelian categories in representation theory, like highest weight categories. Motivated by Dirichlet branes in string theory, Bridgeland defined in \cite{Bridgeland07} stability for an arbitrary triangulated category. This concept turned out to be of great importance for various applications (e.g., \cite{MacriShmidt} and references therein). Bridgeland stability conditions are now well established in algebraic geometry, while the computations with stability conditions, particularly in the geometric setting, are quite difficult. We would like to mention Macri's approach to stability conditions on curves in \cite{Macri07} and Bayer and Macri's study of the space of stability conditions for the local projective plane in \cite{BM11}. The approach in these papers has led some authors, for example Dimitrov, Haiden, Katzarkov and Kontsevich, to look at examples in the representation theory of finite dimensional algebras \cite{DHKK}. In this context much less has been done on spaces of Bridgeland stability conditions, although we would like to mention the work of Broomhead, Pauksztello and Ploog \cite{BPP16}, where the authors showed that for the family of finite dimensional algebras (that have discrete derived category) the stability manifold is contractible. In \cite{KS14} Kontsevich and Soibelman studied the wall crossing in the context of Donaldson-Thomas invariants in integrable systems and mirror symmetry. 
The language of scattering diagram turned out to be very useful. Later on in \cite{BST17}, joining the concept of scattering diagrams and their wall-and-chamber structure  described by Bridgeland in \cite{BR17}, the authors give a description of the wall and chamber structure of any finite dimensional algebra (using the theory of $\tau$-tilting). 

  
The purpose of this paper is to study stable objects in the category of subspace representations of a given poset. We would like to mention that in \cite{WY13} the authors already tried to define moduli spaces of posets via moduli spaces of corresponding bound quivers. This rose certain technical problems as, for instance, the category of bound representations of corresponding quiver is ``bigger'' then the category of representation of underlying poset.
One of the goals of the current paper is to develop a general framework to define and study the moduli spaces of posets intrinsically.  To be more precise, let $\F$ be a field and $\Po=\{s_1,\dots,s_t\}$ be a finite poset. A subspace representation of  $\Po$ is a tuple $\V=(V_0;V_s)_{s\in \Po}$, in which $V_0$ is a finite dimensional vector space over $\F$ and $V_i$ are its subspaces such that $V_s\subseteq V_t$ if $s\prec t$ (that is, each representation is a homomorphism from $\Po$ to the poset of all subspaces of $V_0$). All subspace representations of $\Po$ form an additive category denoted by $\cats{\Po}$ (see Section 1 for more details). Considering  (semi)-stable representations  in $\cats{\Po}$ we adopt the definitions and properties from \cite{Rud}, where (semi)-stable objects in an arbitrary abelian category were studied. Following the ideas of \cite{King} for quivers we approach  the classification of representations of posets geometrically. We show that (semi)-stable orbits are connected to the algebraic definition of (semi)-stability in $\cats{\Po}$  and relate unitary $\chv$-representations of posets (see \cite{KruglyakNazarovaRoiter,KruglyakRoiter,SamYus}) to polystable representations of $\Po$. 

Note that in our case the application of ideas and technique from the cited papers requires a special care in our case, since the category $\cats{\Po}$ is not abelian and the variety of all representations of $\Po$ of fixed dimension is projective.

The paper is organized as follows. In Section \ref{secPrem}  we establish the notation and terminology and prove some preliminary statements. In Section \ref{secDefn} we define an algebraic stability (and costability) in $\cats{\Po}$, prove the existence of Harder-Narasimhan and Jordan-H\"{o}lder filtrations (in $\cats{\Po}$) and relate stability with costability (under certain assumptions). Section \ref{secCoxeter} is devoted to the reflection transformations of posets (introduced in \cite{Drozd}). We prove that the corresponding Coxeter transformations preserve stability in the case of primitive posets. Section \ref{secFinite} is devoted to the posets of finite type. We prove (Theorem \ref{TheoremSection3}) that $\Po$ has a finite type if and only if any indecomposable representation of $\Po$ is positively costable, equivalently if and only if any indecomposable representation of $\Po$ is positively stable. This theorem is a consequence of Propositions \ref{thmDScho} and \ref{propPosit} which are analogues of the Schofield's characterization of Schurian roots for quivers (see \cite[Theorem 6.1]{sch}).  In Section \ref{secGeometric} we relate the introduced concept of stability   with the geometric notion and define moduli space of polystable representations of $\Po$ with fixed admissible dimension vector. Namely, we consider the embeddings of the projective variety $\R{\alv}{\Po}$ of all representations of $\Po$ having the admissible dimension $\alv$ into a projective space and prove that the set of (semi)-stable points of the $\SL(\alpha_0)$-action  coincides with the set of (semi)-stable representations in the sense of Section 2. 
In Section \ref{secMoment} we study the moment map of the $\SL(\alpha_0)$-action on $\R{\alv}{\Po}$ when $\F=\mathbb C$. As a consequence of the theorem of Kempf-Ness  we obtain that the symplectic quotient of  $\R{\alv}{\Po}$ can be identified with the moduli space defined in Section \ref{secGeometric}. Also we show the that preimage of $0$ of the moment map is the set of $\chv$-representations (defined in \cite{KruglyakNazarovaRoiter, KruglyakRoiter, SamYus}). In \ref{appA} we prove some additional statements. 
For the convinience of reading the details of proofs of Proposition \ref{thmDScho} and Proposition \ref{propPosit}) are left in Appendix B and C in ArXiv version of this manuscript.   

\section*{Acknowledgements} This work was initiated in February of 2015 during the visit of the second author to Centre de Recerca Matematica (Spain, Barcelona) to participate in Research program IRTATCA: ``Interactions between Representation Theory, Algebraic Topology and Commutative Algebra'', the hospitality and working atmosphere is gratefully acknowledged. The second author would like to thank Mark Kleiner and Vladimir Sergeichuk for very helpful discussions and comments.   V.F. is
supported in part by  CNPq grant (304467/2017-0) and by 
Fapesp grant (2014/09310-5). K.I. is
supported in part by Fapesp grants 2014/09310-5, 2015/00116-4 and by CNPq grant 456698/2014-0.

The authors are grateful to the referee whose suggestions were very helpful in improving the presentation.

\section{Notations and Terminology} \label{secPrem}

We fix a field $\F$ which is assumed algebraically closed in Section 4 and $\F=\mathbb C$ in Section 5.
A finite poset $\Po\equiv(\Po,\preceq)$ is given by the set of elements $\{s_1,\ldots,s_n\}$ and a partial order $\preceq$. We assume that elements $s_1,\dots,s_n$ of $\Po$ are enumerated so that $s_i \prec s_j$ implies $i<j$. A poset $\Po$ is said to be \textit{primitive} if it is a disjoint union of finite number of linearly ordered posets. Denote by $\Pop$ the dual poset $\Pop\equiv(\Pop,\preceq^\circ)$, in which $a\preceq^\circ b$ if and only if $b\succeq a$ in $\Po$. The relation $\preceq$ is uniquely defined by the \textit{incidence matrix} $C_{\Po}$ of $\Po$; that is, the integral square $n\times n$ matrix 
$$
	\Mi{\Po} = [c_{st}]_{s,t \in \Po}\in \Mp{\Po}(\Zn)=\Ma(\Zn), \quad \mbox{with}\ c_{st}=\left \{ \begin{array}{c}
	   1,\quad \mbox{for}\ s\preceq t,\\
	   0,\quad \mbox{for}\ s\npreceq t.\\ 
	\end{array} \right.
$$
It is easy to see that $\Mi{\Po}$ is invertible, $\Mi{\Po}^{-1} \in \Ma(\Zn)$ and that $\Mi{\Pop}=\Mi{\Po}^{tr}$ (the transpose of $\Mi{\Po}$). Given a poset $\Po$, by $\Poe$ we understand its enlargement by unique maximal element $0$; that is, $\Poe\equiv(\Poe,\preceq^0)$ with $\Poe \setminus \{0\}=\Po$ and the order $\preceq^0$ is  obvious. The \textit{Tits matrix} $\Ti{\Po}$ and the \textit{reduced incidence matrix} $\Mir{\Po}$ of $\Poe$ are defined as the following bipartite matrices (we use the notation and terminology from \cite{simson}):
$$
	\Ti{\Po}=\left[
\begin{array}{c|c}
  1 & 0 \\ \hline
  -E & \Mi{\Po}^{tr}
\end{array}
\right]\in \Mp{\Poe}(\Zn), \qquad
	\Mir{\Po}=\left[
\begin{array}{c|c}
  1 & 0 \\ \hline
  0 & \Mi{\Po}
\end{array}
\right]\in \Mp{\Poe}(\Zn),
$$
in which $E$ is a $1\times n$ unit matrix.

A \textit{subspace representation} of $\Po$ is a system $\V=(V_0;V_s)_{s\in \Po}$ of subspaces $V_s$ of a finite dimensional vector space $V_0$ such that $V_s\subset V_t$ if $s\prec t$. The vector space $V_0$ will be called the \textit{ambient} space of $\V$. A \textit{morphism} between two representations $\V$ and $\V'$ is a $\F$-linear map $g:V_0\rightarrow V_0'$ such that $g(V_s)\subset V_s'$ for all $s$. Denote by $\cats{\Po}$ the corresponding additive category of all subspace representations of $\Po$.  Interested reader is refereed to \cite{SimsonB} where the systematic (up-to-date) exposition of the representation theory of finite posets is given.

The \textit{dimension vector} of $\V$ is a $\mathbb Z$-function on $\Poe$ given by $\dimv\, \V(s)=\dim V_s$, that is the dimension vector of $\V$ is an element of $\Zn^{\Poe}$. 
We say that $\alv=(\alpha_0;\alpha_s)_{s\in \Po}\in \Zn^{\Poe}$ is \textit{admissible}
$\alv \cdot \Mir{\Po}^{-1}$ is non-negative vector and $\alpha_0 \geq \alpha_s$ for all $s \in \Po$. Clearly, if $\alv$ is an admissible dimension vector then $\alv$ is a dimension vector of some representation of $\Po$. Fixing an admissible dimension vector $\alv=(\alpha_0;\alpha_s)_{s\in \Po} \in \Zn^{\Poe}$ we consider the following projective variety (see Proposition \ref{posvar}),
$$
	\R{\alv}{\Po}=\Big\{ (V_s)_{s\in \Po} \in \prod_{s\in \Po} \Gr(\alpha_s,\alpha_0) \ \Big |\ V_s \subset V_t\, \mbox{ if } \, s\prec t \ \Big\}.
$$
The group $\GL(\alpha_0)$ acts on $\R{\alv}{\Po}$ (diagonally) via the base change so that the orbits of this action are in a bijection with the isomorphisms classes of subspace representations of $\Po$ with the dimension 
$\alv$. 

The \textit{coordinate vector} of $\V$ is a function on $\Poe$, given by 
$$
	\cdn\, V(s)=
\left\{
	\begin{array}{l}
		\dim (V_s/\sum_{t\prec s} V_t),\quad s\neq 0, \\
		\dim V_0, \quad s=0.
	\end{array} \right.	
$$

Two elements $s_1,s_2\in \Po$ form an \textit{arrow} (denoted by $s_1\rightarrow s_2$) if $s_1\prec s_2$ and there is no $t\in \Po$ such that $s_1 \prec t \prec s_2$. We say that a representation $\V$ is \textit{coordinate} if 
$\dimv \V= \cdn \V \cdot \Mir{\Po}$. There are various important examples of coordinate representation. Mention that if for any point $s\in \Po$ the sum $\sum_{t\to s} V_t$ is direct then  
$$
	\cdn\, V(s)=\dim\Big(V_s/\sum_{t\prec s} V_t\Big)=\dim V_s - \sum_{t\rightarrow s} \dim V_t, 
$$  
and hence $\dimv \V= \cdn \V \cdot \Mir{\Po}$. One easily checks that any representation of a primitive poset is coordinate and that any subrepresentation of a coordinate representation is coordinate.

Given $\alv \in \Zn^{\Po}$, we define the support of $\alv$, $\supp\alv$, to be the full subposet of $\Po$ of the elements $\{s: \alv(s)\neq 0\}$. An indecomposable representation $\V$ is called \textit{sincere} (resp. \textit{exact}) if $\supp\dimv \V=\Poe$ (resp. if $\supp\cdn \V=\Poe$).
 
The following two bilinear forms play a fundamental role in studying the category 
$\cats{\Po}$ (cf. \cite{simson})
$$\Df{\Po},\Bf{\Po}:\Zn^{\Poe}\times \Zn^{\Poe} \rightarrow \Zn,$$ 
\begin{align*}
	\Df{\Po}(\alv,\bev)&=\alv \cdot \Ti{\Po}\cdot \bev^{tr}=\sum_{s\in \Po} \alpha_s\beta_s+\sum_{t\prec s \in \Po}\alpha_s\beta_t-\alpha_0\sum_{s\in \Po} \beta_s, \\
	\Bf{\Po}(\alv,\bev)&=\alv \cdot \Mi{\Poe}^{-1} \cdot \bev^{tr}=\sum_{s\in \Po} \alpha_s\beta_s+\sum_{t\prec s \in \Po}c^{-}_{st}\alpha_s\beta_t,	
\end{align*}
where $c^{-}_{st}$ is the $(s,t)$ entry of the matrix $\Mi{\Poe}^{-1}\in \Mp{\Poe}(\Zn)$ inverse to $\Mi{\Poe}$. Both forms have certain homological  (see \cite{simson}, for the details) and geometric interpretations (see \cite{CI,SimsonB}). For instance, $\Bf{\Po}(\alv,\alv)$ satisfies the following Tits-type equality
(see \ref{appA} for details)
\begin{equation}\label{dimPosetVar}
	\Bf{\Po}(\alv,\alv)=\dim \GL(\alpha_0)-\dim \R{\alv}{\Po},
\end{equation}
for any admissible dimension vector $\alv\in \mathbb Z^{\Poe}$. 

\begin{prop} \label{corEqualityofForms}
	If $\V,\W\in \cats{\Po}$ are two coordinate representations hence
	$$
		\Bf{\Po}(\dimv \V,\dimv \W)=\Df{\Po}(\cdn \V,\cdn \W).
	$$
\end{prop}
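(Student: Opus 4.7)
The plan is to reduce the identity to a single matrix calculation. Substituting the defining relation $\dimv\V = \cdn\V\cdot\Mir{\Po}$ of coordinate representations (and likewise for $\W$) into the definition $\Bf{\Po}(\dimv\V,\dimv\W) = \dimv\V\cdot\Mi{\Poe}^{-1}\cdot(\dimv\W)^{tr}$ yields, by bilinearity,
$$\Bf{\Po}(\dimv\V,\dimv\W) \;=\; \cdn\V \cdot \bigl(\Mir{\Po}\cdot\Mi{\Poe}^{-1}\cdot\Mir{\Po}^{tr}\bigr)\cdot(\cdn\W)^{tr}.$$
Comparing with $\Df{\Po}(\cdn\V,\cdn\W) = \cdn\V\cdot\Ti{\Po}\cdot(\cdn\W)^{tr}$, the proposition is equivalent to the matrix identity
$$\Mir{\Po}\cdot\Mi{\Poe}^{-1}\cdot\Mir{\Po}^{tr} \;=\; \Ti{\Po}. \qquad(\ast)$$

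To verify $(\ast)$ I would work with the block decomposition $\Poe = \{0\}\sqcup\Po$ in which $0$ is listed first. Since $0$ is the unique maximal element of $\Poe$, one has $s \preceq 0$ for every $s\in\Poe$ while $0 \npreceq s$ for $s\in\Po$, so the incidence matrix and its inverse split as
$$\Mi{\Poe} \;=\; \begin{pmatrix} 1 & 0 \\ E & \Mi{\Po}\end{pmatrix},\qquad \Mi{\Poe}^{-1} \;=\; \begin{pmatrix} 1 & 0 \\ -\Mi{\Po}^{-1}E & \Mi{\Po}^{-1}\end{pmatrix},$$
with $E$ the column of ones. Plugging this and the block form of $\Mir{\Po}$ into the left-hand side of $(\ast)$, a direct multiplication (using only $\Mi{\Po}\cdot\Mi{\Po}^{-1} = I$) collapses the product to
$$\begin{pmatrix} 1 & 0 \\ 0 & \Mi{\Po}\end{pmatrix} \begin{pmatrix} 1 & 0 \\ -\Mi{\Po}^{-1}E & \Mi{\Po}^{-1}\end{pmatrix} \begin{pmatrix} 1 & 0 \\ 0 & \Mi{\Po}^{tr}\end{pmatrix} \;=\; \begin{pmatrix} 1 & 0 \\ -E & \Mi{\Po}^{tr}\end{pmatrix} \;=\; \Ti{\Po},$$
which is exactly $(\ast)$.

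There is no real obstacle; the argument is a short block-matrix computation. The only care needed is in the order-theoretic input -- that $0$ is maximal in $\Poe$, so that $\Mi{\Poe}$ has the stated lower-triangular block form -- because this is what forces the block $-\Mi{\Po}^{-1}E$ to appear in $\Mi{\Poe}^{-1}$ and, after being squeezed between $\Mi{\Po}$ on the left and $\Mi{\Po}^{tr}$ on the right, produces the entry $-E$ that defines the Tits matrix $\Ti{\Po}$.
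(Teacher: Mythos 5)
Your proof is correct and follows the same route as the paper: substitute $\dimv\V=\cdn\V\cdot\Mir{\Po}$ into the definition of $\Bf{\Po}$ and invoke the $\Zn$-congruence $\Mir{\Po}\cdot\Mi{\Poe}^{-1}\cdot\Mir{\Po}^{tr}=\Ti{\Po}$, which the paper states as "easy to check" with a reference to Simson and which you verify explicitly by the block computation. The only point worth noting is that the paper declares $E$ to be a $1\times n$ matrix while using it as a column in the lower-left blocks; your reading of $E$ as the column of ones is the consistent one and your computation is fine.
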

\begin{proof}
As $\V$ and $\W$ are coordinate representations we have 
	$\dimv \V= \cdn \V \cdot \Mir{\Po}$ and $\dimv \W= \cdn \W \cdot \Mir{\Po}$. Also it is easy to check (see also \cite[Proposition 3.13(a)]{SimsonB}) the matrices $\Mi{\Poe}^{-1}$ and $\Ti{\Po}$ are $\Zn$-congruent via
$$
\Mir{\Po}\cdot\Mi{\Poe}^{-1}\cdot\Mir{\Po}^{tr}=\Ti{\Po}.
$$
Thus, 
\begin{align*}
	\Bf{\Po}(\dimv \V,\dimv \W)&=\dimv \V\cdot \Mi{\Poe}^{-1}\cdot (\dimv \W)^{tr}\\
	&=(\cdn \V \cdot \Mir{\Po})\cdot \Mi{\Poe}^{-1}\cdot (\cdn \W \cdot \Mir{\Po})^{tr}\\
	&=\cdn \V\cdot (\Mir{\Po}\cdot \Mi{\Poe}^{-1}\cdot \Mir{\Po}^{tr})\cdot (\cdn \W)^{tr}\\
	&=\cdn \V\cdot\Ti{\Po}\cdot (\cdn \W)^{tr}\\
	&=\Df{\Po}(\cdn \V,\cdn \W).
\end{align*}\end{proof}

Recall (see \cite[Section 2.1]{ladkani} and \cite[Section 2]{simson}) that a matrix $A\in \Ma(\Zn)$ is called \textit{$\mathbb Z$-regular} if $\det A
\neq 0$ and its Coxeter matrix $\Cox{A}$ defined as $\Cox{A}=-A^{-1}\cdot A^{tr}$ lies in $\Ma(\Zn)$.  
Given a poset $\Po$ with $n$ elements, define the following reflection matrices
\begin{equation*} 
	\Refo{\Po}=\left[
\begin{array}{c|c}
  -1 & 0 \\ \hline
  E & I_n
\end{array}
\right],
\qquad 
	\Refl{\Po}=\left[
\begin{array}{c|c}
  1 & E^{tr} \\ \hline
  0 & -I_n
\end{array}
\right], \qquad
	\Refu{\Po}=\left[
\begin{array}{c|c}
  -1 & 0 \\ \hline
  0 & I_n
\end{array}
\right],
\end{equation*}
and
$$
	\Refom{\Po}=\Refu{\Po}\cdot \Refo{\Po}\cdot \Refu{\Po}, \qquad \Reflm{\Po}=\Refu{\Po}\cdot \Refl{\Po}\cdot \Refu{\Po}.
$$
By $\Cox{\Po}$ and $\Coxs{\Po}$ we denote the Coxeter matrix of $\mathbb Z$-regular matrices $\Mi{\Poe}$ and $(\Mir{\Po^{op}}\cdot \Reflm{\Po})^{-1}$ respectively. One checks that  
$$
	\Cox{\Po}=\left[
\begin{array}{c|c}
  -1 & -E^{tr} \\ \hline
  \Mi{\Po}^{-1}\cdot E & \Mi{\Po}^{-1}\cdot (E\cdot E^{tr}-\Mi{\Po}^{tr})
\end{array}
\right],
$$
$$
	\Coxs{\Po}=\left[
\begin{array}{c|c}
  -1 + E^{tr}\cdot \Mi{\Po}^{-1}\cdot E & -E^{tr}\cdot \Mi{\Po}^{-1}\cdot E \\ \hline
  -\Mi{\Po}^{tr}\cdot \Mi{\Po}^{-1}\cdot E & -\Mi{\Po}^{tr}\cdot \Mi{\Po}^{-1}
\end{array}
\right],
$$
and that these matrices have the following factorization:
\begin{equation}\label{Coxeterfactorization}
\begin{split}
	\Cox{\Po}&=\Mir{\Po}^{-1}\cdot \Refo{\Po}\cdot \Mir{\Po^{op}}\cdot \Refl{\Po},\\
	\Cox{\Po}^{-1}&=\Refl{\Po}\cdot (\Mir{\Po^{op}})^{-1}\cdot \Refo{\Po}\cdot \Mir{\Po},\\
	\Coxs{\Po}&=\Mir{\Po^{op}}\cdot \Reflm{\Po}\cdot \Mir{\Po}^{-1}\cdot \Refom{\Po},\\	
	\Coxs{\Po}^{-1}&=\Refom{\Po}\cdot \Mir{\Po}\cdot \Reflm{\Po}\cdot (\Mir{\Po^{op}})^{-1}.\\	
\end{split}
\end{equation}

\section{Stable representations of posets.} \label{secDefn}

\subsection{Definitions and properties}

The notion of \textit{stability} in an abelian category was defined in \cite{Rud}. 
Given an abelian category $\mathfrak A$ and a function $\theta:K_0(\mathfrak A)\rightarrow \Zn$, an object $X\in \mathfrak A$ is called \textit{stable} if $\theta(X)=0$ and $\theta(Y)<0$ for any proper subobject $Y$ of $X$. Our first aim is to define stable objects in $\cats{\Po}$. We adopt the definition above as well as the proof of some results from \cite{Rud} to additive case.

First we define \textit{proper} subobjects in $\cats{\Po}$. 
A morphism $g: \U \to \V$ in $\cats{\Po}$ is said to be \textit{proper} if, 
for all $s\in \Po$, $g(U_s)=V_s\cap g(U_0)$. Given a representation $\V=(V_0;V_s)_{s\in \Po}$ and a subspace $K\subset V_0$, one checks that 
$\V_K=(K;V_s\cap K)_{s\in \Po}$ is the unique subrepresentation of $\V$ with the ambient space $K$ for which the inclusion $\V_K \hookrightarrow \V$ gives a proper monomorphism $\V_K \to \V$. In what follows by \textit{proper subrepresentation} of $\V$ we mean a representation of the form $\V_K$ where $K$ is a proper subspace of the ambient space of $\V$.

\begin{rem}
Generally, given a representation $\V$ and its subrepresentation $\W$ in $\cats{\Po}$, the quotient $\V/\W$ does not need to belong to $\cats{\Po}$. Nevertheless, in the case when $\W=\V_K$ is a proper subrepresentation we have  $\V/\V_K \in \cats{\Po}$ (see Proposition \ref{quotientincats}). 
\end{rem}

The map $\V \longmapsto \dimv \V$ gives rise to an isomorphism between the Grothendieck group $K_0(\cats{\Po})$ and $\Zn^{\Poe}$. Fixing a form $\thv \in \Hom(\Zn^{\Poe},\Zn)$ we say that $\V\in \cats{\Po}$ is \textit{$\thv$-stable} (resp. \textit{$\thv$-semistable}) if $\thv(\dimv(\V))=0$ and 
$$\thv(\dimv(\W))<0\qquad (\mbox{resp.}\ \leq),$$ for any proper subrepresentation 
$\W$ of $\V$. 

This definition is equivalent to the following. Fixing a basis in $\Hom(\Zn^{\Poe},\Zn)$, we will regard $\thv$ as a vector $\thv=(\theta_0;\theta_s)_{s\in \Po}\in \Zn^{\Poe}$, so that $\thv(\dimv \V)$ simply means $\thv\cdot \dimv \V^{tr}$. Define the 
\textit{$\mu_\thv$-slope} of $\V\in \cats{\Po}$ as 
$$
	\mu_\thv(\V)=\dfrac{1}{\dim V_0}\sum_{s\in \Po} \theta_s \dim V_s.
$$
We say that $\V\in \cats{\Po}$ is \textit{$\mu_\thv$-stable} (resp. \textit{$\mu_\thv$-semistable}) if 
$$
	\mu_\thv(\W)<\mu_\thv(\V)\qquad (\mbox{resp.}\ \leq)
$$
for any proper subrepresentation $\W$ of $\V$. Below we establish several standard properties of semistable objects adopting the proofs of some results from \cite{Hille,Rud} to the case of additive category $\cats{\Po}$.

\begin{prop} \label{seesaw}
Let $\ses{\W}{\V}{\U}$ be an exact sequence of representations and let $\thv$ be a weight. Then the following conditions are equivalent:
\begin{enumerate}
	\item[(1)]	$\mu_\thv(\W)\leq \mu_\thv(\V)$,
	\item[(2)]  $\mu_\thv(\W)\leq \mu_\thv(\U)$,
	\item[(3)]  $\mu_\thv(\V)\leq \mu_\thv(\U)$.
\end{enumerate}
\end{prop}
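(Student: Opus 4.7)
The plan is to reduce all three inequalities to a single inequality relating $\mu_\thv(\W)$ and $\mu_\thv(\U)$ by writing $\mu_\thv(\V)$ as a weighted average (mediant) of the other two slopes. This is the standard see-saw argument in the quiver/abelian setting, and the only thing one has to check in the present additive setup is that dimension vectors are additive along the exact sequence under consideration.

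First I would observe that since $0\to \W\to\V\to\U\to 0$ is exact in $\cats{\Po}$, one has $\dimv\V=\dimv\W+\dimv\U$; in particular $\dim V_0=\dim W_0+\dim U_0$ and $\dim V_s=\dim W_s+\dim U_s$ for every $s\in\Po$. Setting
$$
\deg_\thv(\V):=\sum_{s\in\Po}\theta_s\dim V_s,
$$
so that $\mu_\thv(\V)=\deg_\thv(\V)/\dim V_0$, additivity of dimensions immediately gives $\deg_\thv(\V)=\deg_\thv(\W)+\deg_\thv(\U)$.

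The key step is then the algebraic identity
$$
\mu_\thv(\V)=\frac{\deg_\thv(\W)+\deg_\thv(\U)}{\dim W_0+\dim U_0},
$$
which exhibits $\mu_\thv(\V)$ as a mediant of $\mu_\thv(\W)$ and $\mu_\thv(\U)$ with positive weights $\dim W_0$ and $\dim U_0$. By the elementary mediant inequality, $\mu_\thv(\V)$ lies between $\mu_\thv(\W)$ and $\mu_\thv(\U)$, and in fact each of the three conditions (1), (2), (3) is equivalent to the single inequality
$$
\deg_\thv(\W)\cdot\dim U_0 \;\le\; \deg_\thv(\U)\cdot\dim W_0,
$$
obtained by clearing denominators. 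Spelling out the chain (1)$\Leftrightarrow$(2)$\Leftrightarrow$(3) via this common form concludes the proof.

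The only mild obstacle is conceptual rather than computational: in $\cats{\Po}$ the ambient quotient $\V/\W$ need not lie in the category in general, so one must justify that the exact sequence in the hypothesis is of the (proper) kind for which $\dimv$ is additive. This is precisely the content of the remark preceding the proposition together with Proposition \ref{quotientincats}, which guarantees that whenever $\W=\V_K$ is a proper subrepresentation the quotient lies in $\cats{\Po}$ with the expected dimension vector. Once this is invoked, the rest of the argument is the weighted-average manipulation above.
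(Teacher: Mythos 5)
Your proof is correct and is exactly the standard see-saw (mediant) argument that the paper itself invokes by citing \cite[Lemma 2.1]{Hille} and \cite[Lemma 2.6]{Hu} rather than writing it out; the reduction of all three inequalities to $\deg_\thv(\W)\dim U_0\le\deg_\thv(\U)\dim W_0$ via additivity of $\dimv$ along the exact sequence is precisely what those references do. Your remark about verifying that the exact sequences arising in $\cats{\Po}$ are spot-wise exact (so that dimension vectors add) is the right point to flag in this additive, non-abelian setting, and it is covered by Proposition \ref{quotientincats}.
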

\begin{proof}
	The proof is similar to the proof of \cite[Lemma 2.1]{Hille} and \cite[Lemma 2.6]{Hu}.
\end{proof}

\begin{prop} \label{maximal}
Let $\thv$ be any weight. Each representation $\V$ has a unique subspace $K\in V_0$ such that the subrepresentation $\W=\V_K$ satisfies:
\begin{itemize}
	\item[(1)] the value of $\mu_\thv(\W)$ is maximal among all proper subrepresentations of $\V$, and
	\item[(2)] $\W$ is maximal among all proper subrepresentations which have the maximal value $\mu_\thv(\W)$.
\end{itemize} 
\end{prop}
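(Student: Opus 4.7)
The plan is to adapt the standard argument for abelian categories (as in the references cited for Proposition \ref{seesaw}) to the additive setting $\cats{\Po}$. First, note that any proper subrepresentation $\V_K$ of $\V$ satisfies $\dim(V_s \cap K) \leq \dim V_s$ at each $s$, so only finitely many dimension vectors---and hence finitely many slopes---are realised by proper subrepresentations of $\V$. Consequently, $\mu^* := \max_{0 \neq K \subsetneq V_0} \mu_\thv(\V_K)$ is well-defined and attained, and the family $\mathcal{F}^* := \{K : \V_K \text{ is a proper subrepresentation with } \mu_\thv(\V_K) = \mu^*\}$ is nonempty.

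The central step is to show that $\mathcal{F}^*$ is closed under subspace sums, whenever the sum remains a proper subspace of $V_0$: if $K_1, K_2 \in \mathcal{F}^*$ with $K_1 + K_2 \neq V_0$, then $K_1 + K_2 \in \mathcal{F}^*$. I would chain Proposition \ref{seesaw} across the two short exact sequences
\begin{equation*}
0 \to \V_{K_1 \cap K_2} \to \V_{K_1} \to \V_{K_1}/\V_{K_1 \cap K_2} \to 0, \qquad 0 \to \V_{K_2} \to \V_{K_1+K_2} \to \V_{K_1+K_2}/\V_{K_2} \to 0
\end{equation*}
in $\cats{\Po}$, together with the morphism $\V_{K_1}/\V_{K_1\cap K_2} \to \V_{K_1+K_2}/\V_{K_2}$ induced by the second isomorphism theorem at the level of ambient spaces. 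Since $\V_{K_1 \cap K_2}$ is a proper subrep of $\V$, its slope is at most $\mu^* = \mu_\thv(\V_{K_1})$, and Proposition \ref{seesaw} forces $\mu_\thv(\V_{K_1}/\V_{K_1\cap K_2}) \geq \mu^*$. Transporting this lower bound to the quotient in the second sequence and reapplying Proposition \ref{seesaw} then yields $\mu_\thv(\V_{K_1+K_2}) \geq \mu^*$; the reverse inequality is automatic from the definition of $\mu^*$.

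With closure under sums in place, set $K^* := \sum_{K \in \mathcal{F}^*} K$. A finite iteration of the closure property (together with a separate check that, if the running sum ever fills $V_0$, then the value $\mu^*$ is already inherited by $\V$ and one takes $\W = \V$) gives $K^* \in \mathcal{F}^*$, and every $K \in \mathcal{F}^*$ is contained in $K^*$ by construction. Taking $\W := \V_{K^*}$ then satisfies (1) and (2), with uniqueness immediate from the maximality of $K^*$.

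The main obstacle will be the closure-under-sums step. In an abelian category one simply applies Proposition \ref{seesaw} to the short exact sequence $0 \to A \cap B \to A \oplus B \to A + B \to 0$; in $\cats{\Po}$ this is not available because the internal sum $\V_{K_1} + \V_{K_2}$ has $s$-component $V_s \cap K_1 + V_s \cap K_2$, which is generally strictly smaller than $V_s \cap (K_1+K_2)$, the $s$-component of $\V_{K_1+K_2}$. Equivalently, the map $\V_{K_1}/\V_{K_1\cap K_2} \to \V_{K_1+K_2}/\V_{K_2}$ is bijective on ambient spaces but need not be a proper monomorphism in $\cats{\Po}$. Threading the slope comparison through this non-standard morphism---that is, compensating componentwise for the failure of the second isomorphism theorem in the additive category---is the technical heart of the argument.
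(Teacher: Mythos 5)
Your overall strategy coincides with the paper's: existence of the maximizer from finiteness/noetherianity, and the sum--intersection comparison via Proposition \ref{seesaw} to show that two maximal maximizers must coincide. The difference is in execution. The paper applies the seesaw property once, to the single componentwise-exact sequence
\[
0\to\V_{K_1}\cap\V_{K_2}\to\V_{K_1}\oplus\V_{K_2}\to\V_{K_1}+\V_{K_2}\to 0,
\]
where the last term is the internal sum inside $\V$. The observation that makes this work, and which your two-sequence detour obscures, is that the intersection is automatically a proper subrepresentation: $(V_s\cap K_1)\cap(V_s\cap K_2)=V_s\cap(K_1\cap K_2)$, so $\V_{K_1}\cap\V_{K_2}=\V_{K_1\cap K_2}$ and condition (1) applies to it. This gives $\mu_\thv(\V_{K_1\cap K_2})\le\mu_\thv(\V_{K_1}\oplus\V_{K_2})$, and the seesaw then yields $\mu_\thv(\V_{K_1}+\V_{K_2})\ge\mu_\thv(\V_{K_1})$ in one stroke, with no need to pass through the quotients $\V_{K_1}/\V_{K_1\cap K_2}$ and $\V_{K_1+K_2}/\V_{K_2}$ or the non-proper morphism between them.

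The obstacle you single out is genuine, but your sketch does not close it, and it in fact occurs twice on your route where it occurs only once on the paper's. The internal sum $\V_{K_1}+\V_{K_2}$ has $s$-component $(V_s\cap K_1)+(V_s\cap K_2)$, which may be strictly contained in $V_s\cap(K_1+K_2)$; to pass from the lower bound on $\mu_\thv(\V_{K_1}+\V_{K_2})$ to a statement about the proper subrepresentation $\V_{K_1+K_2}$ (equivalently, to transport your bound across $\V_{K_1}/\V_{K_1\cap K_2}\to\V_{K_1+K_2}/\V_{K_2}$), one needs the componentwise inclusion to be slope-nondecreasing, which holds when all $\theta_s\ge 0$ but not for an arbitrary weight. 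The paper's proof leaves exactly this point implicit as well, so you have correctly located the technical heart; but your write-up should either restrict to nonnegative weights or supply the missing comparison rather than defer it. Two smaller points: your fallback ``take $\W=\V$ if the running sum fills $V_0$'' does not satisfy the statement as written, since (1) and (2) range over proper subrepresentations; and once closure under sums is known, uniqueness and condition (2) are immediate from it, so the separate construction of $K^*$ as the sum over all of $\mathcal F^*$ is redundant.
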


\begin{proof} 
	Since $\cats{\Po}$ is noetherian, the existence of a representation $\W$ with $(1)$ and $(2)$ follows. We prove the uniqueness.  
	Let $\W_1$ and $\W_2$ be two non-isomorphic representations satisfying $(1)$ and $(2)$. Consider the following short exact sequence:
$$
	\ses{\W_1\cap \W_2}{\W_1\oplus \W_2}{\W_1+\W_2}.
$$
By  $(1)$ we get $\mu_\thv(\W_1\cap \W_2)\leq \mu_\thv(\W_1)=\mu_\thv(\W_2)$ and $\mu_\thv(\W_1+\W_2)=\mu_\thv(\W_1)=\mu_\thv(\W_2)$. Therefore $\W_1=\W_1+\W_2=\W_2$ by $(2)$.
\end{proof}
Obviously the unique subrepresentation $\W$ from Proposition \ref{maximal} is $\mu_\thv$-semistable. 

The existence of Harder-Narasimhan and Jordan-H\"{o}lder filtrations with (semi)stable factors were proved in \cite[Theorem 2,3]{Rud} for quite general abelian category and in \cite[Theorem 2.5]{Hille} for representations of quivers. Adopting the ideas from cited papers, we provide similar statements in our case. 

\begin{prop}[Harder-Narasimhan filtration] For any $\V=(V_0;V_s)_{s \in \Po}\in \cats{\Po}$ there is a unique filtration (of vector subspaces)
$$
	0=K^0\subset K^1\subset \dots \subset K^{h}=V_0,	
$$
which induces a filtration of $\V$
\begin{equation*} 
	0=\V^{0}\subset \V^{1}\subset \dots \subset \V^{h}=\V,	
\end{equation*}
in which $\V^{i}=\V_{K^i}=(K^i;V_s\cap K^i)_{s\in \Po}$, such that:

\begin{itemize}
\item[(1)] $\V^{i}/\V^{i-1}$ are $\mu_\thv$-semistable, and
\item[(2)] $\mu_\thv(\V^{i}/\V^{i-1})>\mu_\thv(\V^{i+1}/\V^{i})$ for all $i=1,\dots,h-1$.
\end{itemize} 
\end{prop}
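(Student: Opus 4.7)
The plan is to proceed by induction on $\dim V_0$, using Proposition \ref{maximal} to peel off a maximal destabilizing subrepresentation at each stage and then recursing into the quotient. The base case $V_0=0$ is vacuous. For the inductive step, let $\V^1=\V_{K^1}$ be the unique subrepresentation provided by Proposition \ref{maximal}; by properties (1) and (2) of that proposition, $\V^1$ attains the maximal value of $\mu_\thv$ among proper subrepresentations and is inclusion-maximal among those attaining it, and (as noted just after Proposition \ref{maximal}) it is automatically $\mu_\thv$-semistable. Since $\V^1$ is a proper subrepresentation, $\V/\V^1$ lies in $\cats{\Po}$ by the remark preceding the statement, and its ambient dimension is strictly smaller than $\dim V_0$ (or $\V=\V^1$ is already semistable, giving $h=1$).

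Apply the induction hypothesis to $\V/\V^1$ to obtain a filtration $0=\overline\V{}^{0}\subset \overline\V{}^{1}\subset\dots\subset \overline\V{}^{h-1}=\V/\V^1$ with semistable quotients of strictly decreasing $\mu_\thv$-slopes, each $\overline\V{}^{i}$ being of the form $(\V/\V^1)_{\overline K{}^{i}}$. Pulling back each $\overline K{}^{i}$ along the quotient map $V_0\twoheadrightarrow V_0/K^1$ yields an ascending chain $K^1\subset K^2\subset\dots\subset K^h=V_0$, and one checks directly from the definition of $\V_K$ that $\V_{K^{i+1}}/\V_{K^i}\cong \overline\V{}^{i}/\overline\V{}^{i-1}$. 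This shows that the quotients $\V^{i+1}/\V^{i}$ are semistable for $i\geq 1$ with strictly decreasing slopes.

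To close the loop between $\V^1$ and $\V^2/\V^1$, suppose for contradiction that $\mu_\thv(\V^1)\leq \mu_\thv(\V^2/\V^1)$. Proposition \ref{seesaw} applied to the short exact sequence $\ses{\V^1}{\V^2}{\V^2/\V^1}$ then gives $\mu_\thv(\V^1)\leq\mu_\thv(\V^2)$; the strict case contradicts maximality (1) of $\V^1$, while equality forces $\V^1\subsetneq\V^2$ to also attain the maximum, contradicting maximality (2). Hence $\mu_\thv(\V^1)>\mu_\thv(\V^2/\V^1)$, as required.

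For uniqueness, let $0=\W^{0}\subset\W^{1}\subset\dots\subset\W^{\ell}=\V$ be any filtration satisfying (1) and (2). Iterating Proposition \ref{seesaw} along the sequences $\ses{\W^{i-1}}{\W^{i}}{\W^{i}/\W^{i-1}}$ together with the strictly decreasing slope condition shows that $\W^{1}$ has the maximal $\mu_\thv$-slope among all proper subrepresentations and is inclusion-maximal among those achieving it; by the uniqueness clause of Proposition \ref{maximal} it must equal $\V^{1}$. Induction on the quotient $\V/\V^{1}$ then identifies the two filtrations. The main obstacle is the coupling between the first and second steps: ensuring that the slope \emph{strictly} drops when passing from $\V^1$ to the first quotient of the induced filtration on $\V/\V^1$, which is precisely where the inclusion-maximality clause (2) of Proposition \ref{maximal} is essential and where the seesaw equivalences of Proposition \ref{seesaw} have to be used in both directions.
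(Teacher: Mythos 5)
Your proof is correct and takes essentially the same route as the paper's: it peels off the unique maximal destabilizing subrepresentation supplied by Proposition \ref{maximal}, recurses into the quotient (which stays in $\cats{\Po}$ because the subrepresentation is proper), and uses the seesaw equivalences of Proposition \ref{seesaw} to force the strict drop in slope. The only difference is that you make the uniqueness argument explicit, whereas the paper dismisses it as clear from the construction.
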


\begin{proof}
 By Proposition \ref{maximal} there exists a unique $\V^{1}\subset \V$ with maximal slope $\mu_\thv$. If $\V^{1}=\V$ then $\V$ is $\mu_\thv$-semistable and we are done. Otherwise we get the first step of the filtration
 $$
 	0\subset \V^1\subset \V.
 $$
Now consider $\V/\V^{1}$. If it is $\mu_\thv$-semistable then Proposition \ref{seesaw} implies  $\mu_\thv(\V^1)>\mu_\thv(\V/{\V^1})$. If $\V/\V^{1}$ is not $\mu_\thv$-semistable then we apply the above procedure to produce a unique linear subspace $K^2$ ($K^{1}\subset K^{2}\subset V_0$) with $\V^2/\V^1$  $\mu_\thv$-semistable. As $\mu_\thv(\V^1)>\mu_\thv(\V^2)$,  Proposition \ref{seesaw} implies that $\mu_\thv(\V^1)>\mu_\thv(\V^2/{\V^1})$. Then by induction we get the desired filtration. The uniqueness of the filtration is clear from the proof.
\end{proof}

\begin{prop}[Jordan-H\"{o}lder filtration] 
	For any $\mu_\thv$-semistable $\V=(V_0;V_s)_{s \in \Po}\in \cats{\Po}$ there is a  filtration (of vector subspaces)
$$
	0=K^0\subset K^1\subset \dots \subset K^{h}=V_0,	
$$
which induces a filtration of $\V$
\begin{equation*} 
	0=\V^{0}\subset \V^{1}\subset \dots \subset \V^{h}=\V,	
\end{equation*}
in which $\V^{i}=\V_{K^i}=(K^i;V_s\cap K^i)_{s\in \Po}$, such that:
\begin{itemize}
\item[(1)] $\V^{i}/\V^{i-1}$ are $\mu_\thv$-stable, and
\item[(2)] $\mu_\thv(\V^{i}/\V^{i-1})=\mu_\thv(\V^{i+1}/\V^{i})$ for all $i=1,\dots,h-1$.
\end{itemize} 
\end{prop}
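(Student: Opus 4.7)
The plan is to prove this by induction on $\dim V_0$, mimicking the classical Jordan--H\"older argument in an abelian category with Proposition \ref{seesaw} substituting for the usual slope inequalities. If $\V$ is already $\mu_\thv$-stable the trivial chain $0 \subset \V$ works. Otherwise, since $\V$ is semistable but not stable, there exists at least one proper subrepresentation $\V_K$ with $\mu_\thv(\V_K) = \mu_\thv(\V)$. I would take $\V^1 = \V_{K^1}$ to be such a subrepresentation of minimal ambient dimension $\dim K^1$. I claim $\V^1$ is $\mu_\thv$-stable: any proper subrepresentation of $\V^1$ is of the form $(\V^1)_L$ for some $L \subsetneq K^1$, and since $(V_s \cap K^1) \cap L = V_s \cap L$ it coincides as a tuple with $\V_L$, hence is also proper in $\V$; semistability of $\V$ gives $\mu_\thv(\V_L) \leq \mu_\thv(\V) = \mu_\thv(\V^1)$, and equality is excluded by the minimality of $\dim K^1$.

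Next I would consider the quotient $\V/\V^1$, which lies in $\cats{\Po}$ by the remark preceding Proposition \ref{maximal}. Proposition \ref{seesaw} applied to $\ses{\V^1}{\V}{\V/\V^1}$ gives $\mu_\thv(\V/\V^1) = \mu_\thv(\V)$. To verify that $\V/\V^1$ is again $\mu_\thv$-semistable, I would take any proper subrepresentation $(\V/\V^1)_{\overline{L}}$, lift the subspace $\overline{L} \subsetneq V_0/K^1$ via the projection $\pi \colon V_0 \to V_0/K^1$ to $M = \pi^{-1}(\overline{L}) \subsetneq V_0$, check by the modular law ($(V_s + K^1) \cap M = V_s \cap M + K^1$ since $K^1 \subset M$) that $\V_M/\V^1 = (\V/\V^1)_{\overline{L}}$, and apply Proposition \ref{seesaw} to $\ses{\V^1}{\V_M}{\V_M/\V^1}$ together with $\mu_\thv(\V_M) \leq \mu_\thv(\V)$ to conclude $\mu_\thv((\V/\V^1)_{\overline{L}}) \leq \mu_\thv(\V/\V^1)$.

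The induction hypothesis applied to $\V/\V^1$ then yields a filtration of $\V/\V^1$ by subspaces $0 = \overline{K}^0 \subset \overline{K}^1 \subset \dots \subset \overline{K}^{h-1} = V_0/K^1$ with $\mu_\thv$-stable factors of common slope. Setting $K^0 = 0$ and $K^i = \pi^{-1}(\overline{K}^{i-1})$ for $i \geq 1$ produces the required chain $K^1 \subset K^2 \subset \dots \subset K^h = V_0$, and by the lift-and-project compatibility above, the quotients $\V^i/\V^{i-1}$ for $i \geq 2$ inherit $\mu_\thv$-stability from the inductive filtration of $\V/\V^1$, while $\V^1/\V^0 = \V^1$ is stable by the first step. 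All factors share the slope $\mu_\thv(\V)$, yielding condition (2).

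The main technical obstacle I anticipate is precisely this lift-and-project bookkeeping: confirming that proper subrepresentations of $\V/\V^1$ in $\cats{\Po}$ correspond bijectively to subrepresentations of $\V$ of the form $\V_M$ with $K^1 \subset M \subsetneq V_0$, and that the pointwise intersections $V_s \cap M$ commute with passage to $V_0/K^1$ under $\pi$. Once this correspondence is nailed down, the rest is essentially a verbatim transcription of the abelian-category Jordan--H\"older argument, with uniqueness of the filtration being optional (the statement asks only for existence).
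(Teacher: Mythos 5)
Your proof is correct, and it is the standard Jordan--H\"{o}lder construction driven by the seesaw property (Proposition \ref{seesaw}), just as in the paper --- but you build the filtration from the opposite end. The paper chooses $W$ \emph{maximal} among proper subspaces with $\mu_\thv(\V_W)=\mu_\thv(\V)$, so that the top factor $\V/\V_W$ is $\mu_\thv$-stable, and then recurses on the subrepresentation $\V_W$ (which is $\mu_\thv$-semistable of the same slope); you instead choose $K^1$ \emph{minimal} with $\mu_\thv(\V_{K^1})=\mu_\thv(\V)$, so that the bottom piece $\V^1$ is stable, and recurse on the quotient $\V/\V^1$. The trade-off is where the quotient bookkeeping lands: your recursion lives on quotient objects, so you must verify that $\V/\V^1\in\cats{\Po}$ (Proposition \ref{quotientincats}), that proper subrepresentations of $\V/\V^1$ are exactly the $\V_M/\V^1$ with $K^1\subset M\subsetneq V_0$ (your modular-law computation $(V_s+K^1)\cap M=(V_s\cap M)+K^1$), and that the factors of the lifted filtration agree with those of the filtration of $\V/\V^1$; the paper's recursion stays inside the lattice of subrepresentations of $\V$, which keeps the intermediate objects concrete, though its one-line claim that $\V/\V_W$ is stable implicitly uses the same correspondence you spell out. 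In particular your version makes explicit a verification the paper leaves to the reader, at the cost of carrying the quotient identification through the induction; both arguments are sound, and your observation that uniqueness is not required here is accurate.
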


\begin{proof}

If $\V$ is stable we are done. Otherwise, let $\W$ be a maximal subspace such that $\mu_\thv(\V_{W})=\mu_\thv(\V)$. Then $\V_{W}$ is $\mu_\thv$-semistable. Using Proposition \ref{seesaw} we have that $\V/\V_{W}$ is $\mu_\thv$-stable and $\mu_\thv(\V/\V_{W})=\mu_\thv(\V)$. Repeating the same procedure for $\V_W$ we get a desired filtration.
\end{proof}

\begin{prop} \label{PropSchur}
Suppose that both $\V,\V' \in \cats{\Po}$ are $\mu_\thv$-semistable
and $g:\V\rightarrow \V'$ is a non zero morphism. Then $\mu_\thv(\V)\leq \mu_\thv(\V')$. 
\end{prop}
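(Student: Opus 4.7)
The plan is to adapt the classical Schur-style argument for semistable objects in an abelian category, as used in the proofs of \cite[Lemma~2.1]{Hille} and \cite[Lemma~2.6]{Hu}, by passing through the ``kernel'' and ``image'' of $g$ realised in $\cats{\Po}$ as a proper subrepresentation and a quotient.

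Set $L=\Ker g\subsetneq V_0$ (which is proper since $g\ne 0$) and $M=\Ima g\subseteq V'_0$. The proper subrepresentation $\V_L=(L;V_s\cap L)_{s\in\Po}$ of $\V$ satisfies $\mu_\thv(\V_L)\le \mu_\thv(\V)$ by semistability; since $\V/\V_L$ lies in $\cats{\Po}$ (by the remark preceding Proposition~\ref{seesaw}), applying Proposition~\ref{seesaw} to the exact sequence $\ses{\V_L}{\V}{\V/\V_L}$ yields $\mu_\thv(\V)\le \mu_\thv(\V/\V_L)$. Under the identification $V_0/L\cong M$ coming from $g$, the quotient $\V/\V_L$ is isomorphic to the image representation $\mathbf{I}=(M;g(V_s))_{s\in\Po}$, which in turn sits inside the proper subrepresentation $\V'_M=(M;V'_s\cap M)_{s\in\Po}$ of $\V'$ (equal to $\V'$ when $M=V'_0$), since $g(V_s)\subseteq V'_s\cap M$. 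Semistability of $\V'$ then gives $\mu_\thv(\V'_M)\le \mu_\thv(\V')$. Combining these with an intermediate comparison $\mu_\thv(\mathbf{I})\le \mu_\thv(\V'_M)$ would close the chain $\mu_\thv(\V)\le \mu_\thv(\mathbf{I})\le \mu_\thv(\V'_M)\le \mu_\thv(\V')$.

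The main obstacle is precisely this intermediate step. When $g$ is a proper morphism in the sense of Section~\ref{secPrem}, one has $g(V_s)=V'_s\cap M$ for every $s$, so $\mathbf{I}=\V'_M$ and the intermediate comparison is automatic. In general, however, $\mathbf{I}\hookrightarrow \V'_M$ is only a (non-proper) monomorphism with the same ambient space, and since the weights $\theta_s$ can change sign, no direct comparison of slopes is available. I would resolve this by induction on $\dim V_0+\dim V'_0$: the induced monomorphism $\mathbf{I}\hookrightarrow \V'_M$ sits between representations of strictly smaller combined ambient dimension in the non-proper case, and although $\mathbf{I}$ and $\V'_M$ need not themselves be semistable, one passes to their Harder--Narasimhan filtrations and applies the inductive hypothesis to the morphism between appropriate semistable subquotients, whose slopes remain controlled by $\mu_\thv(\V)$ from below and by $\mu_\thv(\V')$ from above.
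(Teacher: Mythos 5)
Your first two paragraphs follow the same route as the paper's own proof, which also passes to $\ker g$ and $\Ima g$, applies Proposition \ref{seesaw} to the exact sequence $0\to\ker g\to\V\to\Ima g\to 0$, and then invokes semistability of $\V'$ against the image; you have in fact located precisely the step the paper elides, namely that $\mathbf{I}=(M;g(V_s))_{s\in\Po}$ is only a subobject of $\V'$, not a proper subrepresentation, so semistability of $\V'$ does not directly bound its slope. The problem is that your proposed repair in the third paragraph does not work. The comparison you need is between $\mathbf{I}$ and $\V'_M=(M;V'_s\cap M)_{s\in\Po}$, which have the \emph{same} ambient space $M$; when $g$ is bijective on ambient spaces (so $\ker g=0$ and $M=V'_0$) the combined ambient dimension does not drop at all, so the induction has no base to reduce to. Moreover you never exhibit a morphism between Harder--Narasimhan subquotients of $\mathbf{I}$ and of $\V'_M$ to which an inductive hypothesis could be applied, and the asserted slope control is not established: the lower HN factors of $\mathbf{I}$ have slope below $\mu_\thv(\mathbf{I})$ and need not be bounded below by $\mu_\thv(\V)$.

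More decisively, the intermediate inequality $\mu_\thv(\mathbf{I})\le\mu_\thv(\V'_M)$ that your induction is meant to supply is false for weights with a negative component, so no argument can produce it at this level of generality. Take $\Po=\{s\}$, $\theta_s=-1$, $\V=(\F;0)$ and $\V'=(\F;\F)$: both have one-dimensional ambient space, hence no proper subrepresentations, and are vacuously $\mu_\thv$-(semi)stable; the identity map is a nonzero morphism $\V\to\V'$, yet $\mu_\thv(\V)=0>-1=\mu_\thv(\V')$. Here $\mathbf{I}=(\F;0)\hookrightarrow(\F;\F)=\V'_M$ is exactly the non-proper monomorphism your induction would have to handle. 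The step you flagged (and hence the proposition as used) is valid only under the additional hypothesis $\theta_s\ge 0$ for all $s\in\Po$, which is the standing assumption in Section \ref{secGeometric}; under that hypothesis the inequality is immediate termwise from $g(V_s)\subseteq V'_s\cap M$, and no induction is needed. So your diagnosis of the gap is correct and valuable, but your proposed resolution is not a proof, and the statement needs the sign restriction on $\thv$ rather than a more elaborate argument.
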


\begin{proof}
Consider the proper induced morphism $g':\V\rightarrow \Ima g$. The kernel $\ker g'$ is a subrepresentation of $\V$ and we have the following short exact sequence 
	$$
		\ses{\ker g'}{\V}{\Ima g}.
	$$
 As $\Ima g$ is a subrepresentation of $\V'$ we have $\mu_\thv(\Ima g)\leq \mu_\thv(\V')$ (since $\V'$ is semistable). Assuming $\mu_\thv(\V)>\mu_\thv(\V')$ we also have that $\mu_\thv(\Ima g)< \mu_\thv(\V)$. Therefore, by Proposition \ref{seesaw},   $\mu_\thv(\ker g')>\mu_\thv(\V)$. But this contradicts the $\mu_\thv$-semistability of $\V$.
\end{proof}

\begin{cor} \label{CorSchur}
If $\V$ is $\mu_\thv$-stable then $\End(\V)$ is a division algebra over $\F$. In particular any stable object is indecomposable. Also if  $\F$ is algebraically closed then $\End(\V)\simeq \F$ and any stable object is Schurian.
\end{cor}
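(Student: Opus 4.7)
The plan is to apply the technique of Proposition \ref{PropSchur} with source and target both equal to $\V$. Given a nonzero endomorphism $g \colon \V \to \V$, I would form the induced short exact sequence $0 \to \ker g' \to \V \to \Ima g \to 0$, where $g' \colon \V \to \Ima g$ is the proper morphism with $\Ima g = \V_{g(V_0)}$ and $\ker g' = \V_{\ker g}$, and the goal is to show that $g$ is an isomorphism in $\cats{\Po}$.

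The core of the argument is a two-sided slope estimate. First, if $\Ima g \subsetneq \V$, stability applied to the proper subrepresentation $\Ima g$ gives $\mu_\thv(\Ima g) < \mu_\thv(\V)$, and Proposition \ref{seesaw} then forces $\mu_\thv(\ker g') > \mu_\thv(\V)$; but $\ker g'$ is itself a proper subrepresentation of $\V$, contradicting stability. Hence $\Ima g = \V$. A symmetric argument rules out $\ker g' \ne 0$: stability would give $\mu_\thv(\ker g') < \mu_\thv(\V)$, and Proposition \ref{seesaw} would then produce $\mu_\thv(\Ima g) > \mu_\thv(\V)$, contradicting $\Ima g = \V$. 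Thus $g$ is both injective and surjective on the ambient space, i.e.\ an isomorphism in $\cats{\Po}$, and $\End(\V)$ is a division $\F$-algebra.

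Indecomposability of $\V$ then follows immediately, since a nontrivial decomposition $\V = \V_1 \oplus \V_2$ would yield an idempotent endomorphism different from $0$ and $\mathrm{id}_\V$, which cannot occur in a division algebra. When $\F$ is algebraically closed, $\End(\V)$ is a finite-dimensional division algebra over $\F$, so the usual Schur's-lemma argument (any $g \in \End(\V)$ has an eigenvalue $\lambda \in \F$, making $g - \lambda \cdot \mathrm{id}_\V$ a non-isomorphism and hence zero) gives $\End(\V) \simeq \F$. The main subtlety I anticipate is essentially the same one already present in Proposition \ref{PropSchur}: checking that $\ker g'$ and $\Ima g$ genuinely correspond to proper subrepresentations of the form $\V_K$ required by the stability definition, which reflects the non-abelian nature of $\cats{\Po}$. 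Once that identification is in place, the slope-based contradictions reduce to the classical abelian argument.
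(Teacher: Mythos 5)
Your argument is exactly the intended derivation: the paper states this corollary without a separate proof, as a direct consequence of Proposition \ref{PropSchur}, and your two-sided slope estimate is precisely that proposition's argument run with $\V'=\V$ and strict inequalities (note that the second half is redundant, since once $\Ima g=\V$ the endomorphism $g$ is a surjective, hence bijective, map of the finite-dimensional ambient space and is therefore already an isomorphism). The subtlety you flag --- that the sequence $0\to\V_{\ker g}\to\V\to\V_{g(V_0)}\to 0$ is exact only when $g(V_s)=V_s\cap g(V_0)$ for all $s$, which is what lets Proposition \ref{seesaw} and the stability inequality for the proper subrepresentation $\V_{g(V_0)}$ be combined --- is genuinely present, but it is inherited verbatim from the paper's own proof of Proposition \ref{PropSchur}, so your proof is on the same footing as the original.
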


\subsection{Costability}
In what follows we relate $\thv$-stability with the following notion. 
Let $\thv \in \Zn^{\Poe}$. We say that $\V\in \cats{\Po}$ is \textit{$\thv$-costable} (resp. \textit{$\thv$-cosemistable}) if $\thv(\cdn(\V))=0$ and 
$$\thv(\cdn(\W))<0\qquad (\mbox{resp.}\ \leq)$$ 
for any proper subrepresentation $\W$ of $\V$. 

Note that in general the function $\cdn$ is not additive (unless $\Po$ is primitive), therefore the notion of costability does not posses the properties proven in Section 2.1. Nevertheless, if $\V$ is coordinate then costability is related to stability as the following proposition shows.

\begin{prop} \label{coordStable}
Let $\V\in \cats{\Po}$ be a coordinate representation. Then $\V$ is $\thv$-stable if and only if $\V$ is $\thv'$-costable with $\thv'=\thv\cdot \Mir{\Po^{op}}$.
\end{prop}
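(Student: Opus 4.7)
The plan is to reduce both the normalization condition and the inequality for proper subrepresentations to a single linear identity coming from the coordinate hypothesis. The key structural observation is that $\Mir{\Po^{op}}=\Mir{\Po}^{tr}$: indeed, by the construction of the reduced incidence matrix as the block-bordered matrix
\[
\Mir{\Po}=\left[\begin{array}{c|c} 1 & 0 \\ \hline 0 & \Mi{\Po}\end{array}\right],
\]
together with the already-recorded identity $\Mi{\Po^{op}}=\Mi{\Po}^{tr}$, we get exactly $\Mir{\Po^{op}}=\Mir{\Po}^{tr}$. This lets me convert $\thv'=\thv\cdot\Mir{\Po^{op}}$ into $\thv'=\thv\cdot\Mir{\Po}^{tr}$.

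Next I would exploit the coordinate hypothesis $\dimv\V=\cdn\V\cdot\Mir{\Po}$. Regarding $\thv$ and $\thv'$ as row vectors and dimension/coordinate data as row vectors, a single calculation yields
\[
\thv\bigl(\dimv\V\bigr)=\thv\cdot\bigl(\cdn\V\cdot\Mir{\Po}\bigr)^{tr}=\thv\cdot\Mir{\Po}^{tr}\cdot(\cdn\V)^{tr}=\thv'\cdot(\cdn\V)^{tr}=\thv'\bigl(\cdn\V\bigr).
\]
In particular the normalization $\thv(\dimv\V)=0$ coincides with $\thv'(\cdn\V)=0$.

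For the stability inequalities I would invoke the fact, already noted in Section \ref{secPrem}, that any subrepresentation of a coordinate representation is coordinate. Hence for any proper subrepresentation $\W=\V_K$ of $\V$ one again has $\dimv\W=\cdn\W\cdot\Mir{\Po}$, and the same computation gives
\[
\thv\bigl(\dimv\W\bigr)=\thv'\bigl(\cdn\W\bigr).
\]
Therefore the condition $\thv(\dimv\W)<0$ for all proper $\W$ is literally the same as $\thv'(\cdn\W)<0$ for all such $\W$, proving the equivalence of $\thv$-stability and $\thv'$-costability.

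No real obstacle is expected: the whole argument is two lines of linear algebra, and the only non-obvious ingredient is the matrix identity $\Mir{\Po^{op}}=\Mir{\Po}^{tr}$ combined with the closure of coordinate representations under taking proper subrepresentations. The same proof obviously also yields the semistable/cosemistable variant if one replaces the strict inequality with $\leq$.
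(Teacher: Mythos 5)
Your proof is correct and follows essentially the same route as the paper's: both arguments rest on the identity $\Mir{\Po^{op}}=\Mir{\Po}^{tr}$, the coordinate relation $\dimv\W=\cdn\W\cdot\Mir{\Po}$ for subrepresentations (using that subrepresentations of coordinate representations are coordinate), and the resulting equality $\thv(\dimv\W)=\thv'(\cdn\W)$. Nothing further is needed.
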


\begin{proof}
	If $\V$ is coordinate then for any subrepresentation $\W$ of $\V$ (not necessarily proper) we have 
	  $$\dimv \W=\cdn \W \cdot \Mir{\Po},$$ and hence 
\begin{align*}
		\thv(\dimv \W)&=\thv\cdot (\dimv \W)^{tr}=\thv\cdot (\cdn \W \cdot \Mir{\Po})^{tr}\\
					   &=\thv\cdot \Mir{\Po}^{tr} \cdot (\cdn \W)^{tr}=\thv\cdot 			\Mir{\Po^{op}}\cdot (\cdn \W)^{tr}\\ 
					   &=\thv'\cdot (\cdn \W)^{tr}=\thv'(\cdn \W).
\end{align*}
Therefore 
$$\thv(\dimv \V)=0 \quad \mbox{if and only if} \quad \thv'(\cdn \V)=0,$$ 
and for any proper subrepresentation $W$
$$\thv(\dimv \W)<0 \quad \mbox{if and only if} \quad \thv'(\cdn \W)<0.$$ 
The claim follows.
\end{proof}

\begin{cor}\label{NonSchurCostab} Let  $\V\in \cats{\Po}$ be an indecomposable coordinate representation whose endomorphism ring is not a division algebra. Then $\V$ can not be costable with respect to  some form. 
\end{cor}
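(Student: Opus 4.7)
The plan is to deduce this from the two immediately preceding results: Proposition \ref{coordStable} (which transports costability of a coordinate representation to stability via the invertible matrix $\Mir{\Po^{op}}$) and Corollary \ref{CorSchur} (which says stable implies endomorphism ring is a division algebra). So the strategy is a short contrapositive argument.

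First, I would suppose for contradiction that $\V$ is $\thv'$-costable for some weight $\thv' \in \Zn^{\Poe}$. The next step is to produce an integer weight $\thv$ for stability from $\thv'$. The matrix $\Mir{\Po^{op}}$ defined in Section \ref{secPrem} satisfies $\Mir{\Po^{op}} \in \Mp{\Poe}(\Zn)$ and is invertible over $\mathbb Q$ (it is upper/lower unitriangular up to the $1\times 1$ block), so there is a unique rational solution $\thv \in \mathbb Q^{\Poe}$ to the equation $\thv \cdot \Mir{\Po^{op}} = \thv'$. Clearing denominators by multiplying $\thv$ and $\thv'$ by a common positive integer $N$, and observing that both $\mu_\thv$-stability and $\thv$-costability are invariant under scaling of the weight by a positive integer, I can arrange $\thv \in \Zn^{\Poe}$ while preserving the (co)stability hypothesis.

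Now I would apply Proposition \ref{coordStable}: since $\V$ is coordinate and $\thv' = \thv \cdot \Mir{\Po^{op}}$, the $\thv'$-costability of $\V$ is equivalent to $\thv$-stability of $\V$. Hence $\V$ is $\thv$-stable. But then Corollary \ref{CorSchur} forces $\End(\V)$ to be a division algebra, contradicting the hypothesis. This yields the desired conclusion that no costability weight exists.

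There is no genuine obstacle; the only point requiring a sentence of care is the passage from a rational solution of $\thv\cdot \Mir{\Po^{op}} = \thv'$ to an integer one, which is handled by scaling and by the fact that the defining inequalities $\thv(\dimv\W) < 0$ and equality $\thv(\dimv\V) = 0$ are homogeneous in $\thv$. Everything else is a bookkeeping application of the two cited results.
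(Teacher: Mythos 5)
Your proof is correct and takes essentially the same route as the paper, which likewise argues by contradiction: costability of a coordinate representation yields stability via Proposition \ref{coordStable}, and then Corollary \ref{CorSchur} forces $\End(\V)$ to be a division algebra. The only difference is your extra care in solving $\thv\cdot\Mir{\Po^{op}}=\thv'$ over $\mathbb Q$ and clearing denominators; this is harmless but unnecessary, since $\Mi{\Po}$ is unitriangular (so $\Mir{\Po^{op}}$ is already invertible over $\Zn$) and the integral weight $\thv=\thv'\cdot(\Mir{\Po^{op}})^{-1}$ exists directly.
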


\begin{proof}
	If $\V$ is costable then $\V$ is stable by Proposition \ref{coordStable} and, therefore its endomorphism ring is a division algebra by Corollary \ref{CorSchur}. This is a contradiction.
\end{proof}

\subsection{Positive stability} 
We say that representation $\V$ is \textit{positively stable} (respectively \textit{costable}) if there exists a form $\thv\in \Zn^{\Poe}$ with $\theta_s>0, s\in \Po$ such that $\V$ is $\thv$-stable (respectively costable). 

Note that if a representation is $\thv$-stable this does not imply in general that $\theta_s>0, s\in \Po$. For instance, if $\V$ is a general representation of a poset $\Po$ with $4$ incomparable elements in dimension $\alv=(2;1,1,1,1)$, then $\V$ is $(-5;4,4,4,-2)$-stable but the form is not positive.

We need the following extension of stability. Given a $\thv$-stable representation $\V$ of a poset $\Po$ and any representation $\widetilde \V \in \cats{\widetilde \Po}$ such that $\Po$ is a subposet $\widetilde \Po$ and $\widetilde \V\big\vert_{\Po}=\V$, define
 $\widetilde \theta_0=\theta_0$, $\widetilde \theta_s=\theta_s$ if $s\in \Po$ and $\widetilde \theta_s=0$ if $s\notin \Po$. Obviously $\widetilde \V$ is $\widetilde \thv$-stable. We prove even a stronger connection.

\begin{prop} \label{stLift}
	Let $\V\in \cats{\Po}$ be a positively stable representation with form $\thv$.
	Any representation $\widetilde \V \in \cats{\widetilde \Po}$, such that $\Po$ is a subposet $\widetilde \Po$ and $\widetilde \V\big\vert_{\Po}=\V$ is positively stable with some form $\widetilde \thv$.
\end{prop}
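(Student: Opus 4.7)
The naive extension $\widetilde\theta_s=\theta_s$ for $s\in\Po$ and $\widetilde\theta_s=0$ for $s\in\widetilde\Po\setminus\Po$ obviously preserves stability but fails positivity on the new vertices. My plan is to add small positive integer weights on the new vertices, compensate in the ambient weight $\widetilde\theta_0$ so that the balance $\widetilde\thv(\dimv\widetilde\V)=0$ still holds, and scale the original weight by a large integer so that the added perturbation cannot destroy the strict inequality on proper subrepresentations.

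Concretely, I would fix an integer $N\ge 1$ to be specified later and set $\widetilde\theta_s=N\alpha_0\theta_s$ for $s\in\Po$, $\widetilde\theta_s=\alpha_0$ for $s\in\widetilde\Po\setminus\Po$, and
\[
\widetilde\theta_0=N\alpha_0\theta_0-\sum_{s\in\widetilde\Po\setminus\Po}\widetilde\alpha_s,
\]
where $\alpha_0=\dim V_0$ and $\widetilde\alpha_s=\dim\widetilde V_s$. These entries are integers, and positivity on $\widetilde\Po$ is built in since $\theta_s>0$ for $s\in\Po$ and $\alpha_0\ge 1$. A direct calculation using the hypothesis $\thv(\dimv\V)=0$ shows $\widetilde\thv(\dimv\widetilde\V)=0$, so only the stability inequality on proper subrepresentations remains.

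For a proper subspace $K\subset V_0$ write $\W=\V_K\in\cats{\Po}$ and $\widetilde\W=\widetilde\V_K\in\cats{\widetilde\Po}$; since both have ambient space $K$, one is a proper subrepresentation exactly when the other is, and $\W$ equals the restriction of $\widetilde\W$ to $\Po$. Expanding directly gives
\[
\widetilde\thv(\dimv\widetilde\W)=N\alpha_0\cdot\thv(\dimv\W)+\sum_{s\in\widetilde\Po\setminus\Po}\bigl(\alpha_0\dim\widetilde W_s-\dim K\cdot\widetilde\alpha_s\bigr).
\]
Because $\thv\in\Zn^{\Poe}$ and $\V$ is $\thv$-stable, the integer $\thv(\dimv\W)$ is strictly negative, hence $\le -1$, whereas each summand on the right is bounded in absolute value by $\alpha_0\widetilde\alpha_s$. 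Therefore the whole expression is at most $-N\alpha_0+\alpha_0\sum_{s\in\widetilde\Po\setminus\Po}\widetilde\alpha_s$, which is strictly negative as soon as $N>\sum_{s\in\widetilde\Po\setminus\Po}\widetilde\alpha_s$. Fixing any such $N$ completes the construction.

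The main obstacle is the simultaneous tension between three constraints: integrality of $\widetilde\thv$, strict positivity of its coordinates on $\widetilde\Po$, and the zero-sum normalization $\widetilde\thv(\dimv\widetilde\V)=0$. Scaling the new weights by $\alpha_0$ forces $\widetilde\theta_0$ to stay integral, while the extra factor $N$ on the original weights creates enough slack for the bounded perturbation coming from the new vertices to be absorbed by the strictly negative integer value of $\thv$ on any proper subrepresentation of $\V$. Note that no structural hypothesis on how $\Po$ embeds into $\widetilde\Po$ is used, so the argument works for arbitrary poset extensions.
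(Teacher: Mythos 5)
Your proof is correct and follows essentially the same strategy as the paper's: exploit integrality to get $\thv(\dimv\W)\le -1$ on proper subrepresentations, scale the original weight by a large integer, assign the new vertices a positive weight of order $\dim V_0$, and compensate in $\widetilde\theta_0$ so the normalization $\widetilde\thv(\dimv\widetilde\V)=0$ survives. The only cosmetic difference is that you handle all vertices of $\widetilde\Po\setminus\Po$ in one step, whereas the paper adds one vertex at a time and inducts.
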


\begin{proof}
	We prove the statement for the case when 
	$\Po=\widetilde \Po \setminus  \{\tilde s\}$ (the remaining part follows by induction). If $\V=(V_0;V_s)_{s\in \Po}$, we view the representation $\widetilde \V$ as $\widetilde \V=(V_0;V_{\tilde s},V_s)_{s\in \Po}$. Let $\U$ be a proper subrepresentation of $\V$ such that $\thv(\dimv \U)$ is maximal. As $\thv(\dimv \U)<0$ we have that $\thv(\dimv \U)\leq -1$. Hence defining $\thv'=(\dim V_{\tilde s}\dim V_0+1)\cdot \thv$, we have that $\V$ is $\thv'$-stable and for any proper subrepresentation $\W$, 
	$$\thv'(\dimv \W)=(\dim V_{\tilde s}\dim V_0+1)\cdot \thv(\dimv \W)\leq-\dim V_{\tilde s} \dim V_0-1.$$
Set $\nu=-\dim V_{\tilde s} \dim V_0-1$ and define a form $\widetilde \theta$ as  follows: 
$$
  \widetilde \theta_0=\nu \theta_0-\dim V_s, \qquad
  \widetilde \theta_t=\left\{\begin{array}{c}
  		\theta'_t,\quad  t\neq \tilde s\\ 
  		\dim V_0,\quad  t=\tilde s.
  \end{array}\right.
$$
Then
$$\widetilde \thv(\dimv \widetilde \V)=\nu \thv'(\dimv \V)=0,$$
and for any proper subrepresentation $\widetilde \W=(W_0;W_{\tilde s},W_s)_{s\in \Po}$ we have 
\begin{align*}
	\widetilde \thv(\dimv \widetilde \W)&=\thv'(\dimv \W)+\dim V_0\dim{\widetilde W_{\tilde s}}-\dim V_{\tilde s}\dim{\widetilde W_0}\\
	&\leq -\dim V_{\tilde s}\dim V_0-1+\dim V_0\dim{\widetilde W_{\tilde s}}-\dim V_{\tilde s}\dim{\widetilde W_0}\\
	&\leq -\dim V_{\tilde s}\dim V_0-1+\dim V_0\dim V_{\tilde s}<0.
\end{align*}
As  $\widetilde \thv$ has all positive components except $\widetilde \theta_0$, the claim follows.
\end{proof}

Similarly one proves the following

\begin{prop} \label{cstLift}
Let $\V\in \cats{\Po}$ be a positively costable representation with form $\thv$.
Any representation $\widetilde \V \in \cats{\widetilde \Po}$ such that $\Po$ is a subposet $\widetilde \Po$, $\widetilde \V\big\vert_{\Po}=\V$ and $\cdn \widetilde \V \big\vert_{\Po}=\cdn \V$ is positively costable with some form $\widetilde \thv$.
\end{prop}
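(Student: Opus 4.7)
The plan is to mimic the proof of Proposition~\ref{stLift}, with $\cdn$ replacing $\dimv$ throughout. By induction on $|\widetilde{\Po}\setminus\Po|$ it suffices to treat the case $\widetilde{\Po}=\Po\cup\{\tilde s\}$, in which $\widetilde{\V}=(V_0;V_{\tilde s},V_s)_{s\in\Po}$ and every proper subrepresentation has the form $\widetilde{\W}=(W_0;W_{\tilde s},W_s)_{s\in\Po}$ with $W_0\subsetneq V_0$, $W_s=V_s\cap W_0$, $W_{\tilde s}=V_{\tilde s}\cap W_0$; its restriction $\W:=\widetilde{\W}\big\vert_\Po$ is a proper subrepresentation of $\V$.

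I would then repeat the same rescaling trick. Starting from a form $\thv$ with $\theta_s>0$ for $s\in\Po$ witnessing costability of $\V$, Noetherianity of $\cats{\Po}$ gives $\max_{\U\subsetneq\V}\thv(\cdn\U)\leq-1$, so $\thv':=(\dim V_0\dim V_{\tilde s}+1)\cdot\thv$ satisfies $\thv'(\cdn\U)\leq-(\dim V_0\dim V_{\tilde s}+1)$ on every proper subrepresentation $\U$. Using the hypothesis $\cdn\widetilde{\V}\big\vert_\Po=\cdn\V$, I define $\widetilde{\thv}$ by $\widetilde{\theta}_s=\theta'_s$ for $s\in\Po$, $\widetilde{\theta}_{\tilde s}=\dim V_0$, and $\widetilde{\theta}_0$ as the unique value making $\widetilde{\thv}(\cdn\widetilde{\V})=0$; a direct computation gives $\widetilde{\theta}_0-\theta'_0=-\cdn\widetilde{\V}(\tilde s)\leq 0$. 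All components of $\widetilde{\thv}$ other than $\widetilde{\theta}_0$ are then strictly positive.

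The key input for the estimate is the pointwise inequality $\cdn\widetilde{\W}(s)\leq\cdn\W(s)$ for every $s\in\Po$: for $\tilde s\not\prec s$ in $\widetilde{\Po}$ both sides coincide, and for $\tilde s\prec s$ the denominator $\sum_{t\prec s,\,t\in\Po}W_t$ only grows upon adjoining $W_{\tilde s}$. Positivity of $\widetilde{\theta}_s$ for $s\in\Po$ then yields $\sum_{s\in\Po}\widetilde{\theta}_s\cdn\widetilde{\W}(s)\leq\thv'(\cdn\W)-\theta'_0\dim W_0$. Combining this with $\widetilde{\theta}_{\tilde s}\cdn\widetilde{\W}(\tilde s)\leq\dim V_0\dim V_{\tilde s}$, the rescaling bound $\thv'(\cdn\W)\leq-(\dim V_0\dim V_{\tilde s}+1)$, and the sign $(\widetilde{\theta}_0-\theta'_0)\dim W_0\leq 0$, one arrives at $\widetilde{\thv}(\cdn\widetilde{\W})\leq -1<0$ for every proper $\widetilde{\W}$. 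This establishes positive costability of $\widetilde{\V}$.

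The main obstacle, relative to Proposition~\ref{stLift}, is precisely the pointwise monotonicity of $\cdn$ under enlargement of the poset: because $\cdn$ is not additive one cannot simply read it off from dimensions as in the $\dimv$ case, and the inequality must be argued directly from the definition of the coordinate vector. Once it is in hand, the scaling bookkeeping is identical to the stable case and the argument closes.
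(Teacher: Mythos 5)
Your argument is correct and is precisely the adaptation of the proof of Proposition~\ref{stLift} that the paper leaves implicit (its ``proof'' is only the phrase ``similarly one proves''). You correctly isolate and justify the one genuinely new ingredient --- the pointwise monotonicity $\cdn\widetilde{\W}(s)\le\cdn\bigl(\widetilde{\W}\big\vert_{\Po}\bigr)(s)$ for $s\in\Po$, which requires the positivity of the components $\theta_s$ to be usable --- and the extra hypothesis $\cdn\widetilde{\V}\big\vert_{\Po}=\cdn\V$ enters exactly where you invoke it, in the normalization $\widetilde{\theta}_0=\theta_0'-\cdn\widetilde{\V}(\tilde s)$.
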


\section{Reflections and stability} \label{secCoxeter}

In this section we discuss how the Coxeter transformations for posets defined in \cite{Drozd} act on (semi)stable representations.

\subsection{Reflections and Coxeter transformations} Reflections for posets were defined in \cite{Drozd} using the bimodule language of poset representations. Below we recall this construction.
Given $\V=(V_0;V_s)_{s\in \Po}$ define $\Cs(\V)=(V_0^{*};V_s^{\bot})$, where $V_s^{\bot}=\{\varphi \in V_0^{*}\ | \varphi(V_s)=0 \}.$ Obviously, $\Cs(\V)\in \cats{\Po^{op}}$, $\Cs^2(\V)\cong\V$ and 
$$\dimv \Cs(\V)=\dimv \V \cdot \Refl{\Po}=(\dim V_0;\dim V_0-\dim V_s)_{s\in \Po}.$$

The second reflection is defined as follows. Given a representation $\V=(V_0;V_s)_{s\in \Po} \in \cats{\Po}$ define by 
$\Split{\V}$ the following family of systems of subspaces:
$$
	\Split{\V}=\Big\{(V_0;V_s')_{s\in \Po} \ | \  V_s'\subset V_0,\ V_s'\cong V_s/\sum_{t\prec s} V_t,\ \sum_{t\preceq s} V_t'=V_s  \Big\}.
$$
Note that having any system of subspaces $(V_0;V_s)_{s\in \Po}$ indexed by a poset $\Po$, one can form a representation $\SSum{\Po}{(V_0;V_s)}=(V_0;\tilde V_s)_{s\in \Po}$ of $\Po$ setting $V_s=\sum_{t\preceq s}V_t$. In particular, it  follows that for any  
$(V_0;V_s')_{s\in \Po} \in \Split{\V}$ we recover $\V$ as $\V=\SSum{\Po}{(V_0;V_i')}$.

Now let $(V_0;V_i)_{i\in I}$ be any system of subspaces in $V_0$ indexed by a finite set $I$ (considered as a poset with trivial partial order), such that the map 
\begin{align*}
	\varphi:\bigoplus_{i \in I} V_i& \to V_0\\
	          (v_i)_{i\in I} & \mapsto \sum_{i\in I} v_i, \qquad v_i\in V_i
\end{align*}
is surjective. 
Consider the following short exact sequence
$$
\xymatrix@R2pt@C25pt{
	0 \ar@{->}[r]& \ker \varphi \ar@{->}[r]^>>>>>{\psi} & \bigoplus_{i \in I} V_i \ar[r]^<<<<{\varphi} & V_0 \ar@{->}[r]& 0
	}
$$
 where $\psi(y)=(p_i(y))_{i\in I}$, $p_i:\ker \varphi \to V_i$.  	
Dualizing we get the sequence 
$$
\xymatrix@R2pt@C25pt{
	0 \ar@{->}[r]& V_0^* \ar@{->}[r]^{\varphi^*\quad } & \bigoplus_{i \in I} V_i^* \ar[r]^{\psi^*} &(\ker \varphi)^* \ar@{->}[r]& 0.
	}
$$
 Denote by $\Gale{(V_0;V_i)_{i\in I}}$ the system of subspaces $((\ker \varphi)^*;\textrm{Im}(p_i^*))_{i\in I}$. The action of $\Gamma$ on dimensions is the following: $$\dimv\Gamma(V_0;V_i)_{i\in I}=\dimv(V_0;V_i)_{i\in I}\cdot \Refo{I}=(\sum_{i\in I} \dim V_i - \dim V_0; \dim V_i)_{i\in I}.$$

Denote by $E_0$ a simple representation of the form $(\F;0)_{s\in \Po}$ in $\cats{\Po}$.  Let $\V \in \cats{\Po}$ be any representation. Choosing $(V_0;V_s')_{s\in \Po} \in \Split{\V}$ we define $\Ct(\V)\in \cats{\Po^{op}}$ by
\begin{equation*}
	\Ct(\V)=\SSum{\Po^{op}}{\Gamma{(V_0;V_s')_{s\in \Po}}}.
\end{equation*}
One easily checks that if $\V$ does not contain $E_0$ as a direct summand then the map $\varphi$ above is surjective, and therefore $\Ct(\V)$ is well-define (in particular, it does not depend on the choice of representative in $\Split{\V}$) and we have $\Ct^2(\V)\cong\V$.
 If  $\V$ and $\Ct(\V)$ are coordinate (for instance, this is always the case when $\Po$ is primitive) we have that
$$
	\dimv\Ct(\V)=\dim \V \cdot (\Mir{\Po})^{-1} \cdot \Refo{\Po} \cdot \Mir{\Po^{op}}.
$$
The compositions (when it makes sense) of reflections $\Cs$, $\Ct$ will be denoted by
$$
	\Fp=S \circ T, \qquad \Fm=T \circ S
$$ 
and called \textit{Coxeter transformations}.

\begin{rem}
We refer to \cite[Chapter 11]{SimsonB} and \cite{dpSimson} for the relation between transformation $\Fp$ and Auslander-Reiten translate for poset.
\end{rem}

Using formulas \eqref{Coxeterfactorization} one checks that the transformations $\Fp$ and $\Fm$ act on dimensions of representations (in coordinate cases) as follows:
$$
\Fp(\dimv \V)=\dimv \V\cdot \Cox{\Po},\qquad \Fm(\dimv \V)=\dimv \V\cdot (\Cox{\Po})^{-1}.
$$

\subsection{Stability behaviour of reflections}

First we show that $\Cs$ maps (semi)stable representation into (semi)stable ones.

\begin{lem}
Let $\thv=(\theta_0;\theta_s)_{s\in \Po}$ be a weigth. A representation $\V=(V_0;V_s)_{s\in \Po}$ is $\thv=(\theta_0;\theta_s)_{s\in \Po}$-(semi)stable iff the representation $\Cs(\V)=(V_0^{*};V_s^{\bot})_{s\in \Po}$ is $\Cs(\thv)$-(semi)stable, where
$$
\Cs(\thv):=\thv\cdot \Refom{\Po} =\Big(-\sum_{s\in \Po}\theta_s-\theta_0;\theta_s \Big )_{s\in \Po}. 
$$
\end{lem}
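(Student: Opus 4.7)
The strategy is to exploit that $\Cs$ is a contravariant functor from $\cats{\Po}$ to $\cats{\Pop}$ that interchanges subrepresentations and quotients, so that proper subrepresentations of $\V$ and of $\Cs(\V)$ are placed in a natural bijection via $K\mapsto K^{\bot}$. Concretely, for any subspace $K\subset V_0$ the representation $\Cs(\V)_{K^{\bot}}=(K^{\bot};V_s^{\bot}\cap K^{\bot})_{s\in\Po}$ may be identified with $\Cs(\V/\V_K)$ by means of the canonical isomorphism $(V_0/K)^{*}=K^{\bot}$ and the elementary identity $V_s^{\bot}\cap K^{\bot}=(V_s+K)^{\bot}$. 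Since $K\mapsto K^{\bot}$ is an inclusion-reversing bijection on subspaces, it restricts to a bijection between the nonzero proper subspaces of $V_0$ and those of $V_0^{*}$.

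The heart of the argument is a direct dimension count. Using $\dim K^{\bot}=\dim V_0-\dim K$ and $\dim(V_s^{\bot}\cap K^{\bot})=\dim V_0-\dim V_s-\dim K+\dim(V_s\cap K)$, together with the explicit form $\Cs(\thv)=(-\theta_0-\sum_{t\in\Po}\theta_t;\theta_s)_{s\in\Po}$, I would expand $\Cs(\thv)(\dimv\Cs(\V)_{K^{\bot}})$ and collect terms. The contributions proportional to $\dim V_0\sum_s\theta_s$ and to $\dim K\sum_s\theta_s$ cancel cleanly, leaving the key identity
\[
\Cs(\thv)\bigl(\dimv \Cs(\V)_{K^{\bot}}\bigr)=\thv(\dimv \V_K)-\thv(\dimv \V),
\]
valid for every subspace $K\subset V_0$.

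From this identity the lemma follows by simple specialization. Taking $K=0$ gives $\Cs(\thv)(\dimv\Cs(\V))=-\thv(\dimv\V)$, so the weights of the whole representations vanish simultaneously. Assuming $\thv(\dimv\V)=0$, the identity reduces to $\Cs(\thv)(\dimv\Cs(\V)_{K^{\bot}})=\thv(\dimv\V_K)$ for every proper subspace $K$, which transfers the strict sign condition for stability and the weak sign condition for semistability verbatim. The converse direction is automatic since $\Cs\circ\Cs$ acts as the identity both on representations and on weights (one easily checks $\Refom{\Po}^{2}=I$). I expect no serious obstacle; the only step that demands care is the bookkeeping in the dimension computation that produces the key identity.
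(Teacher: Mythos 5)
Your proof is correct and follows essentially the same route as the paper: the paper likewise uses $\dim(V_s^{\bot}\cap K^{\bot})=\dim V_0-\dim V_s-\dim K+\dim(V_s\cap K)$ to translate a destabilizing subspace $K^{\bot}$ of $\Cs(\V)$ into a destabilizing subspace $K$ of $\V$, which is exactly your key identity $\Cs(\thv)(\dimv\Cs(\V)_{K^{\bot}})=\thv(\dimv\V_K)-\thv(\dimv\V)$ in contrapositive form. Your packaging of the computation as a single identity, together with the explicit remarks on the bijection $K\mapsto K^{\bot}$ and the involutivity of $\Cs$ and $\Refom{\Po}$, is a slightly tidier presentation of the same argument.
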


\begin{proof}
Notice that
\begin{align*}
	\thv(\dimv \V)&=\sum_{s\in \Poe} \theta_s\dim V_s,\\
	\Cs(\thv)(\dimv \Cs(\V))&=(-\sum_{s\in \Po}\theta_s-\theta_0)\dim V_0+\sum_{s\in \Po} \theta_s(\dim V_0-\dim V_s)\\
	&=-\thv(\dim \V).
\end{align*}
Therefore $\thv(\dimv \V)=0$ iff $\Cs(\thv)(\dimv \Cs(\V))=0$.
Now if $\Cs(\V)=(V_0^{*};V_s^{\perp})_{s\in \Po}$ is not $\Cs(\thv)$-stable, then there exists a subspace $K^\perp$ such that 
\begin{equation} \label{perpNonStab}
\sum_{s\in \Po} \theta_s \dim (V^{\perp}_s \cap K^\perp)-(\sum_{s \in \Po} \theta_s + \theta_0) \dim K^\perp \ge 0. 
\end{equation}
As $\dim (V^{\perp}_s \cap K^\perp)=\dim V_0-\dim V_s-\dim K + \dim (V_s\cap K)$, from \eqref{perpNonStab} we have 
\[
	\sum_{s\in \Po} \theta_s \big(\dim V_0-\dim V_s-\dim K + \dim (V_s\cap K)\big)-\Big(\sum_{s \in \Po} \theta_s + \theta_0\Big) (\dim V_0-\dim K) \ge 0. 
\]
Or, equivalently 
\[
	\sum_{s\in \Po} \theta_s \dim (V_s\cap K) + \theta_0 \dim K \geq 0. 
\]
 Hence $(V_0;V_s)_{s\in \Po}$ is not $\thv$-stable which is a contradiction.
\end{proof}

Assuming that $\Po$ is primitive we prove that the reflection $\Ct$ also maps (semi)stable representation into (semi)stable ones.  

\begin{lem}
A representation $\V$ is $\thv$-(semi)stable iff any representation $\W \in \Split{\V}$ is 
$\thv \cdot \Mir{\Po^{op}}$-(semi)stable.
\end{lem}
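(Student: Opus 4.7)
My plan is to reduce the claim to Proposition \ref{coordStable} by means of the identification $\dimv \W = \cdn \V$ for every $\W\in\Split{\V}$. Indeed, the defining condition $V_s'\cong V_s/\sum_{t\prec s}V_t$ in $\Split{\V}$ forces $\dim V_s'=\cdn \V(s)$, and the ambient $V_0$ is common to $\V$ and $\W$, so as vectors in $\Zn^{\Poe}$ we have $\dimv \W=\cdn \V$. Combined with the coordinate identity $\dimv \V=\cdn \V\cdot \Mir{\Po}$ (which is in force here, since $\Split{\V}$ presupposes that the partial sums $\sum_{t\preceq s}V_t'$ recover $V_s$) and the congruence $\Mir{\Po^{op}}=\Mir{\Po}^{tr}$ used already in the proof of Proposition \ref{corEqualityofForms}, I obtain the key identity
\[
\thv(\dimv \V)\;=\;\thv\cdot \Mir{\Po}^{tr}\cdot (\cdn \V)^{tr}\;=\;(\thv\cdot \Mir{\Po^{op}})\cdot (\dimv \W)^{tr}\;=\;\thv'(\dimv \W),
\]
where $\thv'=\thv\cdot \Mir{\Po^{op}}$. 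This takes care of the vanishing part of the (semi)stability condition on both sides simultaneously.

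For the strict inequality on proper subrepresentations, I would apply Proposition \ref{coordStable} to each proper subrepresentation $\V_K$ of $\V$. By the remark earlier in the preliminaries that subrepresentations of coordinate representations are again coordinate, $\V_K$ is coordinate, so Proposition \ref{coordStable} gives $\thv(\dimv \V_K)=\thv'(\cdn \V_K)$. Thus $\thv$-(semi)stability of $\V$ is equivalent to the condition $\thv'(\cdn \V_K)<0$ (resp.\ $\leq 0$) for all proper subrepresentations $\V_K$. The task is then to match $\cdn \V_K$ with the dimension vector of a corresponding proper subrepresentation of $\W$. This is done via the $\SSum{\Po}{\cdot}$ construction, which is inverse to $\Split{\cdot}$: to a choice of subspaces $V_s''\subseteq V_s'$ one associates a subrepresentation of $\V$ with ambient $\sum_s V_s''$ whose $s$-th subspace is $\sum_{t\preceq s}V_t''$; conversely, for each proper $\V_K$ one picks a splitting $\W_K'\in\Split{\V_K}$ which, by construction, sits inside $\W$. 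Under this pairing $\dimv$ on the $\W$-side equals $\cdn$ on the $\V$-side, so Proposition \ref{coordStable} gives the matching stability inequalities.

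The main obstacle will be making this correspondence of subrepresentations genuinely bijective and inclusion-respecting. The naive map $K\mapsto \W_K=(K;V_s'\cap K)_{s\in\Po}$ is not generally adequate because the splitting $(V_s')$ need not restrict to a splitting of $\V_K$ — the dimensions $\dim(V_s'\cap K)$ can fall short of $\cdn \V_K(s)$. I expect to resolve this by exploiting the freedom in the choice of splitting: any $\W_K'\in\Split{\V_K}$ can be embedded as a sub-antichain of some $\W\in\Split{\V}$ by extending the complements $V_s''$ of $V_K$ to complements $V_s'$ of $V$, and this flexibility is precisely what is encoded by the phrase ``any $\W\in\Split{\V}$'' in the statement. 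Once this embedding is set up, the equivalence of (semi)stabilities follows from the identity $\thv(\dimv \V_K)=\thv'(\cdn \V_K)=\thv'(\dimv \W_K')$ established above.
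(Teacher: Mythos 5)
Your route is the same as the paper's: the paper's entire proof is the remark that the argument is ``similar to the proof of Proposition \ref{coordStable}'', i.e., one transports the weight through the congruence $\Mir{\Po}$ relating $\dimv$ and $\cdn$, and your identity $\thv(\dimv\V)=(\thv\cdot\Mir{\Pop})(\dimv\W)$, using $\dimv\W=\cdn\V$ and $\Mir{\Pop}=\Mir{\Po}^{tr}$, is exactly that computation. You also deserve credit for flagging a genuine subtlety that the paper's one-line proof glosses over: for a fixed $\W=(V_0;V'_s)\in\Split{\V}$ and a test subspace $K$, the subobject $(K;V'_s\cap K)$ of $\W$ need not have dimension vector equal to $\cdn\V_K$, so Proposition \ref{coordStable} cannot be applied verbatim to the subobjects. (One small correction: the mere existence of a splitting does not force $\V$ to be coordinate --- it only gives $\dim V_s\le\sum_{t\preceq s}\cdn\V(t)$; coordinateness here comes from the standing primitivity assumption of this subsection, not from the definition of $\Split{\V}$.)

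The remaining problem is that your repair closes only one implication of the ``iff''. Extending a splitting of $\V_K$ to a splitting of $\V$ shows: if $\V$ fails $\thv$-(semi)stability at $K$, then \emph{some} $\W\in\Split{\V}$ fails $(\thv\cdot\Mir{\Pop})$-(semi)stability at the same $K$; contrapositively, if every $\W$ is (semi)stable then so is $\V$. The converse implication --- $\V$ (semi)stable implies every $\W$ is --- is not reached by this device. For a fixed $\W$ and arbitrary $K$ one has $(\thv\cdot\Mir{\Pop})(\dimv\W_K)=\thv\bigl(\dimv\,\SSum{\Po}{\W_K}\bigr)$, where $\SSum{\Po}{\W_K}=(K;\sum_{t\preceq s}(V'_t\cap K))_{s\in\Po}$ is a subrepresentation of $\V$ whose subspaces can be \emph{strictly} smaller than $V_s\cap K$; it is not a proper subrepresentation, and $\thv$-stability of $\V$ says nothing about it directly. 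To deduce $\thv(\dimv\,\SSum{\Po}{\W_K})\le\thv(\dimv\V_K)<0$ you need $\theta_s\ge 0$ for all $s\in\Po$ (or some substitute argument); for an arbitrary weight this step is missing. You should either impose that sign condition --- which is consistent with the implicit positivity the neighbouring lemma on $\Gale{\V}$ already uses --- or give a different argument for this direction.
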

\begin{proof}
The proof is similar to the proof of Proposition \ref{coordStable}.
\end{proof}
Similarly we have that if a system of subspaces $(V_0;V_s)_{s\in \Po}$ indexed by a poset $\Po$ (not necessarily a representation) is $\thv$-stable then  $\SSum{\Po}{(V_0;V_s)}$ is $\thv \cdot (\Mir{\Po^{op}})^{-1}$-stable.

\begin{lem}
Let $\thv=(\theta_0;\theta_s)_{s\in \Po}$ be a weight, 
$\V=(V_0;V_s)_{s\in \Po}$  a system of subspaces which does not contain $E_0$ as a direct summand. Then $\V$ is $\thv=(\theta_0;\theta_s)_{s\in \Po}$-(semi)stable iff the system  $\Gale{\V}$ is $\Gale{\thv}$-(semi)stable, where
$$
\Gale{\thv}:=\thv\cdot \Reflm{\Po}=(\theta_0;-\theta_0-\theta_s)_{s\in \Po}.
$$
\end{lem}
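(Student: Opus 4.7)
The plan is to reduce the (semi)stability equivalence to two ingredients: a numerical identity for the weight evaluated on the full dimension vector, and a compatible correspondence between proper subrepresentations of $\V$ and $\Gale{\V}$ under which the evaluated weights are related by a sign.

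First I would verify the numerical identity. Using the dimension formula $\dimv \Gale{\V} = \bigl(\sum_s \dim V_s - \dim V_0;\, \dim V_s\bigr)_{s \in \Po}$ coming from the construction of $\Gamma$, together with the explicit form $\Gale{\thv} = (\theta_0;\, -\theta_0 - \theta_s)_{s \in \Po}$, a short computation gives
\[
\Gale{\thv}(\dimv \Gale{\V}) = \theta_0\Bigl(\sum_s \dim V_s - \dim V_0\Bigr) + \sum_s (-\theta_0 - \theta_s)\dim V_s = -\thv(\dimv \V),
\]
so $\thv(\dimv \V) = 0$ if and only if $\Gale{\thv}(\dimv \Gale{\V}) = 0$.

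Next I would establish a bijection between proper subspaces $K \subsetneq V_0$ and proper subspaces $L \subsetneq (\ker \varphi)^*$ arising from Gale duality. Given $K$, the restriction of $\varphi$ to the sub-system $\bigoplus_s (V_s \cap K) \to K$ and its analogue for the quotient system $(V_0/K;\, (V_s + K)/K)_{s \in \Po}$ fit into a commutative diagram of short exact sequences. The snake lemma provides the precise relation between $\ker \varphi_K$, $\ker \varphi$, and the kernel of the induced map on the quotient. Dualizing and intersecting with the images $\textrm{Im}(p_s^*)$ inside $(\ker \varphi)^*$ yields a canonical subspace $L_K \subset (\ker \varphi)^*$. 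The involutive property $\Gale{\Gale{\V}} \cong \V$ for systems without $E_0$ summand, together with the parallel involution $\Gale{\Gale{\thv}} = \thv$ on weights, ensures that $K \mapsto L_K$ is a bijection on proper subspaces. A direct calculation in the same spirit as the first step then gives
\[
\Gale{\thv}(\dimv \Gale{\V}_{L_K}) = -\thv(\dimv \V_K),
\]
so the strict (resp.\ non-strict) stability inequalities on proper subrepresentations of $\V$ transform into the corresponding inequalities for $\Gale{\V}$.

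The main obstacle is the bookkeeping in the second step: computing $\dim(\textrm{Im}(p_s^*) \cap L_K)$ in closed form in terms of $\dim(V_s \cap K)$ requires a careful application of the snake lemma to the restriction--quotient diagram, and then a direct expansion to match the linear combinations dictated by $\Gale{\thv}$ and $\thv$. Once this bookkeeping is carried out, the two numerical identities combine to yield the equivalence of $\thv$- and $\Gale{\thv}$-(semi)stability; the consistency checks provided by the involutions $\Gale{\Gale{\V}} \cong \V$ and $\Gale{\Gale{\thv}} = \thv$ make the identifications unambiguous.
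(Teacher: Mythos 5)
Your first step (the identity $\Gale{\thv}(\dimv \Gale{\V})=-\thv(\dimv \V)$) is correct and matches the paper. The gap is in your second step. You claim a \emph{bijection} $K\mapsto L_K$ between proper subspaces of $V_0$ and proper subspaces of $(\ker\varphi)^*$ under which the weights satisfy the exact identity $\Gale{\thv}(\dimv \Gale{\V}_{L_K})=-\thv(\dimv \V_K)$. Neither assertion can hold in general: $\dim V_0$ and $\dim(\ker\varphi)^*=\sum_s\dim V_s-\dim V_0$ are usually different, so the subspace lattices are not in bijection; and the induced subrepresentation of $\Gale{\V}$ on the subspace $M^*$ coming from $K$ has $\dim\bigl(\mathrm{Im}(p_s^*)\cap M^*\bigr)\geq\dim A_s^*$ where $A_s=V_s/(V_s\cap K)$, with strict inequality possible, so the slopes are related by an \emph{inequality}, not an equality. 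An argument built on an exact slope-reversing bijection therefore does not go through.

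The correct route, which is what the paper does, is asymmetric: one proves only that a destabilizing subspace for $\V$ produces a destabilizing subspace for $\Gale{\V}$. Concretely, from a proper $K\subset V_0$ with $\sum_s\theta_s\dim(V_s\cap K)+\theta_0\dim K\geq 0$ one builds the quotient short exact sequence $0\to M\to\bigoplus_s A_s\to V_0/K\to 0$, dualizes to get $M^*\subset(\ker\varphi)^*$, computes $\dim M^*=\sum_s\dim V_s-\dim V_0-\bigl(\sum_s\dim(V_s\cap K)-\dim K\bigr)$ and $\dim A_s^*=\dim V_s-\dim(V_s\cap K)$, and rewrites the hypothesis as $\sum_s(-\theta_0-\theta_s)\dim A_s^*+\theta_0\dim M^*\geq 0$. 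Since $A_s^*\subseteq\mathrm{Im}(p_s^*)\cap M^*$ and the coefficients $-\theta_0-\theta_s$ are positive, replacing $\dim A_s^*$ by $\dim\bigl(\mathrm{Im}(p_s^*)\cap M^*\bigr)$ only increases the left-hand side, so $M^*$ destabilizes $\Gale{\V}$. The reverse implication then follows not from a subspace bijection but from the involutions $\Gale{\Gale{\V}}\cong\V$ and $\Gale{\Gale{\thv}}=\thv$ (valid because $\V$ has no $E_0$ summand). Note also that this argument uses the positivity of $-\theta_0-\theta_s$, a hypothesis your exact-equality scheme would not need but which is essential for the inequality to point the right way.
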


\begin{proof}
It is enough to show that if $\V$ is not $\thv$-stable then $\Gale{\V}$ is not $\Gale{\thv}$-stable. One checks that $\thv(\dimv \V)=0$ iff $\Gale{\thv}(\dimv \Gale{\V})=0$. 
If $\V$ is not $\thv$-stable then there exists a proper subspace $K$ in $V_0$ such that
\begin{equation}\label{nonstabGale}
\sum_{s\in \Po}\theta_s\dim(V_s \cap K)+\theta_0 \dim K\geq 0. 
\end{equation}
Consider the following commutative diagram
\begin{equation*}
\begin{tikzcd}
                &   0                 & 0     & 0  &   \\
    0 \arrow{r} &   M \arrow{u} \arrow{r}               & \bigoplus_{s\in \Po}A_s \arrow{u}\arrow{r}    & B \arrow{u}\arrow{r} & 0 \\
    0 \arrow{r} &   \ker \varphi \arrow{u} \arrow{r}               & \bigoplus_{s\in \Po}V_s \arrow{u}\ar[r,"\varphi"]    & V_0 \arrow{u}\arrow{r} & 0 \\
    0 \arrow{r} &   \ker \psi \arrow{u} \arrow{r}                & \bigoplus_{s\in \Po} V_s\cap K \ar[u,hook]\ar[r,"\psi"]    & K \ar[u,hook]\arrow{r}  & 0 \\
                &   0 \arrow{u}                & 0  \arrow{u}   & 0 \arrow{u} &   \\
\end{tikzcd}
\end{equation*}
Dualizing the diagram we get that $M^*$ is a subspace of $\ker \varphi^{*}$. We will show that
\begin{equation} \label{nonstabGale3}
	\sum_{s\in \Po}(-\theta_0-\theta_s)\dim (V_s^*\cap M^*)+\theta_0 \dim M^*\geq 0.
\end{equation}
We have
\begin{equation*}
\begin{split} 
\dim M^*=\dim M&=\dim \ker \varphi- \dim \ker \psi\\
      &=\sum_{s\in \Po} \dim V_s-\dim V_0-(\sum_{s\in \Po} \dim(V_s \cap K)-\dim K).
\end{split}
\end{equation*}
On the other hand $\dim V_s -\dim(V_s \cap K)=\dim A_s=\dim A_s^*$. Using these identities one shows that the inequality \eqref{nonstabGale} is equivalent to 
\begin{equation}\label{nonstabGale2}
\sum_{s\in \Po}(-\theta_0-\theta_s)\dim A_s+\theta_0 \dim M^*\geq 0. 
\end{equation}
As each $A_s^*$ is a subspace in $V_s^*$ and $M^*$, therefore $\dim(V_s^* \cap M^*)\geq \dim A_s^*$. Thus inequality \eqref{nonstabGale2} implies (as each $-\theta_0-\theta_s$ is positive) inequality \eqref{nonstabGale3}. Hence the system of subspaces $\Gale{\V}$ is not $\Gale{\thv}$-stable.

\end{proof}

\begin{prop}
Assume that a representation $\V$ does not contain $E_0$ as a direct summand.
Then $\V$ is $\thv$-(semi)stable iff a representation $\Ct(\V)$ is $\Ct(\thv)$-(semi)stable,
 where
$$
	\Ct(\thv)=\thv\cdot \Mir{\Po^{op}}\cdot \Reflm{\Po} \cdot (\Mir{\Po})^{-1}.
$$
\end{prop}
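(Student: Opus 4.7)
The plan is to exploit the factorisation $\Ct=\SSum{\Po^{op}}{\cdot}\circ \Gale{\cdot}\circ \Split{\cdot}$ already built into the definition of $T$, and then compose the three (semi)stability transfer lemmas proved just above one after another. Specifically, for any choice of representative $\W=(V_0;V_s')_{s\in\Po}\in\Split{\V}$, we have
\[
\Ct(\V)=\SSum{\Po^{op}}{\Gale{\W}}.
\]
Each of the three operations changes the weight by right-multiplication by an explicit matrix, and the product of these three matrices is exactly the one in the statement.

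In detail, I would argue as follows. First, by the lemma on splittings, $\V$ is $\thv$-(semi)stable iff the system $\W$ is $\thv_{1}$-(semi)stable, where $\thv_{1}:=\thv\cdot\Mir{\Po^{op}}$. Second, since $\V$ has no $E_0$ as a direct summand, the natural map $\varphi:\bigoplus_{s\in\Po}V_s'\to V_0$ is surjective, so $\Gale{\W}$ is well-defined; by the lemma about $\Gamma$, $\W$ is $\thv_{1}$-(semi)stable iff $\Gale{\W}$ is $\thv_{2}$-(semi)stable, where
\[
\thv_{2}:=\Gale{\thv_{1}}=\thv_{1}\cdot\Reflm{\Po}=\thv\cdot\Mir{\Po^{op}}\cdot\Reflm{\Po}.
\]
Third, applying the converse direction of the splitting lemma, read off for the poset $\Po^{op}$: a system of subspaces $\X$ (indexed by $\Po^{op}$) is $\thv'$-(semi)stable iff $\SSum{\Po^{op}}{\X}$ is $\thv'\cdot(\Mir{\Po})^{-1}$-(semi)stable (the matrix $\Mir{(\Po^{op})^{op}}=\Mir{\Po}$ appears, inverted because we are now going from systems to representations). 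Applying this to $\X=\Gale{\W}$ yields that $\Gale{\W}$ is $\thv_{2}$-(semi)stable iff $\Ct(\V)$ is $\thv_{2}\cdot(\Mir{\Po})^{-1}=\Ct(\thv)$-(semi)stable, which is precisely the desired equivalence.

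The only two points that require a small check, rather than a literal citation, are (i) verifying that in the splitting and partial-sum steps the stability condition does transfer in both directions (the paper states the ``Split'' direction for representations-to-systems explicitly, and asserts the converse after the statement; the proof of both directions is completely parallel to the argument for Proposition \ref{coordStable}, using that the map $\W\mapsto\SSum{\Po}{\W}$ restricts bijectively to a correspondence between proper subsystems of $\W$ (subspaces $K\subset V_0$) and proper subrepresentations of $\V$, with matching numerical data), and (ii) checking that the composition of the three weight transformations yields exactly the matrix $\Mir{\Po^{op}}\cdot\Reflm{\Po}\cdot(\Mir{\Po})^{-1}$ displayed in the statement, which is immediate by associativity.

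The main (and essentially only) obstacle is the careful bookkeeping in step (i): one must make sure that subsystems, proper subrepresentations, and their dimensions really do correspond under $\Split{\cdot}$ and $\SSum{\Po^{op}}{\cdot}$ on both sides of the ``iff'', so that the weight transfer works not just for $\V$ itself but for every proper subrepresentation simultaneously. Once that is in place, the remainder of the proof is a mechanical concatenation of the three already-proved reflection lemmas, so no further computation is needed.
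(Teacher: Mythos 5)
Your proposal is correct and follows exactly the route the paper intends: the paper states this proposition without any explicit proof, immediately after the three transfer lemmas for $\Split{\cdot}$, $\Gamma$ and the partial-sum construction, and the intended argument is precisely the concatenation you describe, with the three weight matrices multiplying to $\Mir{\Po^{op}}\cdot\Reflm{\Po}\cdot(\Mir{\Po})^{-1}$. The bookkeeping issues you flag in point (i) — in particular that $\Gale{\W}$ must lie in $\Split{\Ct(\V)}$ so that the converse direction of the splitting lemma applies — are genuine but are features of the paper's own lemmas rather than gaps introduced by your argument.
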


Combining this proposition with the previous two lemmas  we get
\begin{thm} \label{CoxThm}
Assume that $\Po$ is primitive poset, $\alv=(\alpha_0;\alpha_s)_{s\in \Po}$ the dimension vector and $\thv$  a weight. A representation $\V\neq E_0$ is $\thv$-stable (respectively semistable) iff $\Fp(\V)$ is $\Fp(\thv)$-stable (respectively semistable), where
$$
	\Fp(\thv)=\thv\cdot \Coxs{\Po}.
$$
\end{thm}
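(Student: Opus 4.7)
The plan is to use the factorization $\Fp = \Cs \circ \Ct$ and to chain the two (semi)stability-preservation results already established in this subsection. Since $\Po$ is primitive, every representation of $\Po$ and of $\Po^{op}$ is coordinate, so the splitting/summation lemmas apply. A $\thv$-stable representation is indecomposable by Corollary \ref{CorSchur}, so the hypothesis $\V \neq E_0$ ensures that $\V$ contains no $E_0$ summand and $\Ct(\V)$ is well-defined; the semistable case reduces to the same setting by splitting off any $E_0$ summands, which is straightforward.

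With this reduction in place, the proposition preceding the theorem yields that $\V$ is $\thv$-(semi)stable iff $\Ct(\V)\in \cats{\Po^{op}}$ is $\Ct(\thv)$-(semi)stable, where
$$\Ct(\thv) = \thv \cdot \Mir{\Po^{op}} \cdot \Reflm{\Po} \cdot (\Mir{\Po})^{-1}.$$
Applying the first lemma of this subsection (the $\Cs$-duality) to $\Ct(\V)$, now viewed as an object of $\cats{\Po^{op}}$, shows that this is in turn equivalent to $\Cs(\Ct(\V))=\Fp(\V) \in \cats{\Po}$ being $\Ct(\thv) \cdot \Refom{\Po^{op}}$-(semi)stable. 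Composing the equivalences gives the theorem with weight
$$\Fp(\thv) = \thv \cdot \Mir{\Po^{op}} \cdot \Reflm{\Po} \cdot (\Mir{\Po})^{-1} \cdot \Refom{\Po^{op}}.$$

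To identify this with $\thv \cdot \Coxs{\Po}$, note that $\Refom{\Po^{op}} = \Refom{\Po}$ since the defining matrices $\Refo{\Po}$ and $\Refu{\Po}$ depend on $\Po$ only through $|\Po|$. The claim then follows immediately from the factorization $\Coxs{\Po}=\Mir{\Po^{op}}\cdot \Reflm{\Po}\cdot \Mir{\Po}^{-1}\cdot \Refom{\Po}$ recorded in \eqref{Coxeterfactorization}. The substantive work has already been carried out in the three preceding lemmas; the main delicate points remaining are the bookkeeping of the weight transformations when passing to $\Po^{op}$ and the careful treatment of $E_0$ summands in the semistable case.
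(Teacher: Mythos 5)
Your proposal is correct and follows essentially the same route as the paper, which proves the theorem precisely by combining the $\Ct$-proposition with the preceding $\Cs$-duality lemma and reading off the weight from the factorization \eqref{Coxeterfactorization}; your extra bookkeeping (noting $\Refom{\Po^{op}}=\Refom{\Po}$ and matching the product $\Mir{\Po^{op}}\cdot \Reflm{\Po}\cdot \Mir{\Po}^{-1}\cdot \Refom{\Po}$ with $\Coxs{\Po}$) is exactly the verification the paper leaves implicit. Your remark on splitting off $E_0$ summands in the semistable case is a reasonable patch for a point the paper itself does not address.
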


In a subsequent work  we will establish similar statements for reflection $\Ct$ in the case of non-primitive posets.

\section{Stability and posets of finite type}  \label{secFinite}

Recall that M.Kleiner in \cite{Kleiner2} (see also \cite[Theorem 10.1]{SimsonB}) showed that a poset $\mathcal S$
has only a finite number of non equivalent indecomposable  representations (that is, the category $\cats{\Po}$ is of finite representation type) if and only if it does not contain a full poset whose Hasse diagram is one of the following
\begin{equation} \label{critial_posets_}
\begin{split}
\entrymodifiers={[o]} \xymatrix @C=0.3cm @R=0.5cm
{
  & & & &  & &  &  &  &  &  & &  &  &  &    \bullet \ar@{-}[d] & & & &
\\
  & & & &  & &  &  &  &  & &  &  &  & & \bullet \ar@{-}[d] &&   &  & \bullet \ar@{-}[d]
\\
  & & & & &  &  &  &  & & \bullet \ar@{-}[d] & \bullet \ar@{-}[d] &  & & & \bullet \ar@{-}[d] & & &  & \bullet \ar@{-}[d]
  \\
  & & & & & \bullet \ar@{-}[d] & \bullet \ar@{-}[d] & \bullet \ar@{-}[d] & &  & \bullet \ar@{-}[d] & \bullet \ar@{-}[d] & & & \bullet \ar@{-}[d] &  \bullet \ar@{-}[d] & & \bullet \ar@{-}[d] \ar@{-}[rd] & \bullet \ar@{-}[d] & \bullet \ar@{-}[d]\\
 \bullet & \bullet & \bullet & \bullet &, \resizebox{0.3cm}{!}{ } & \bullet  & \bullet & \bullet & , \resizebox{0.3cm}{!}{ }  & \bullet &\bullet & \bullet & , \resizebox{0.3cm}{!}{ }  & \bullet & \bullet & \bullet & , \resizebox{0.3cm}{!}{ }  & \bullet & \bullet & \bullet&. }
\end{split}
\end{equation}
Tha posets in list \eqref{critial_posets_} we call \textit{critical}. Note that $\cats{\Po}$ is of tame representation type for each critical poset (see, \cite[Chapter 15] {SimsonB} for  details).

In this section we prove the following

\begin{thm} Let $\Po$ be a finite poset. The following statements are equivalent. \label{TheoremSection3} 
\begin{itemize}
	\item[(a)] The category $\cats{\Po}$ is of finite representation type.
	\item[(b)] Any indecomposable representation of $\Po$ is positively costable.
	\item[(c)] Any indecomposable representation of $\Po$ is positively stable.\end{itemize}
\end{thm}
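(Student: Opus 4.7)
The plan is to deduce the theorem essentially as a corollary of the two Schofield-type characterizations (Propositions \ref{thmDScho} and \ref{propPosit}), combined with Kleiner's classification of finite representation type posets and the basic facts Corollaries \ref{CorSchur} and \ref{NonSchurCostab}.

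First I would treat the implications (a) $\Rightarrow$ (b) and (a) $\Rightarrow$ (c). Assuming $\cats{\Po}$ is of finite representation type, Propositions \ref{thmDScho} and \ref{propPosit} translate positive costability (respectively, positive stability) of a given indecomposable $\V$ into an explicit numerical inequality on the maximal proper sub-coordinate (respectively, sub-dimension) vectors of $\cdn \V$ (respectively, $\dimv \V$). Kleiner's theorem (\cite[Theorem 10.1]{SimsonB}) supplies a complete list of indecomposables for posets avoiding the critical list \eqref{critial_posets_}, organized according to the shape of $\Po$. A direct verification of these inequalities for every such indecomposable then yields (b) and (c); this case-by-case analysis is the combinatorial bulk of the argument and is deferred to Appendices B and C. For primitive posets every representation is coordinate so the two inequalities are related by a single matrix multiplication, cutting the bookkeeping in half.

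For the reverse implications I would argue by contrapositive. Assume $\cats{\Po}$ is of infinite representation type, so by Kleiner's theorem $\Po$ contains a subposet $\Po'$ from \eqref{critial_posets_}; by \cite[Chapter 15]{SimsonB} the category $\cats{\Po'}$ is tame. In particular $\cats{\Po'}$ admits indecomposables whose endomorphism algebra is not a division ring, e.g.\ representatives of non-homogeneous tubes in the Auslander-Reiten quiver. Choose such a non-Schurian indecomposable $\V'$, taking it coordinate (which is automatic for the four primitive critical posets and possible within the tubes of the remaining one). Extend $\V'$ to $\V \in \cats{\Po}$ by setting $V_s = 0$ for $s \in \Po \setminus \Po'$; this keeps $\V$ indecomposable and coordinate with $\End \V \simeq \End \V'$ still non-Schurian. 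Corollary \ref{CorSchur} then rules out positive stability of $\V$ (contradicting (c)), and Corollary \ref{NonSchurCostab} rules out positive costability (contradicting (b)).

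The main obstacle is the combinatorial verification underlying Propositions \ref{thmDScho} and \ref{propPosit}, requiring an explicit description of the maximal proper sub-coordinate and sub-dimension vectors of each indecomposable of each non-critical poset; the non-primitive finite type posets (where $\cdn$ and $\dimv$ diverge) are the delicate case. A secondary technical point is, in the converse direction, guaranteeing that the non-Schurian indecomposable chosen in the tame critical subposet can be taken coordinate so that Corollary \ref{NonSchurCostab} applies; this is handled inside each critical case by inspecting its Auslander-Reiten quiver.
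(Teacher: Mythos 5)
Your treatment of the converse directions matches the paper's: infinite type forces the existence of indecomposable (coordinate) representations with non-division endomorphism ring, and Corollaries \ref{CorSchur} and \ref{NonSchurCostab} then exclude stability and costability. (One small caution: extending a representation of a full subposet $\Po'$ to $\Po$ by setting $V_s=0$ off $\Po'$ is not always legitimate, since an element of $\Po\setminus\Po'$ may lie above elements of $\Po'$; one should instead set $V_t=\sum_{s\preceq t,\, s\in\Po'}V_s$, which is what preserves indecomposability and the endomorphism ring.)

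The genuine gap is in the forward direction. You propose to deduce (a)$\Rightarrow$(b) and (a)$\Rightarrow$(c) by ``direct verification of these inequalities for every such indecomposable,'' but there are infinitely many posets of finite representation type, so this is not a finite procedure, and Propositions \ref{thmDScho} and \ref{propPosit} are only verified case-by-case for the \emph{exact} and \emph{sincere} representations of the finitely many exact/sincere posets $\mathcal S_1,\dots,\mathcal S_6$ (plus the primitive cases, which reduce to Schofield's theorem for quivers). The missing idea is the lifting step: every indecomposable of a finite-type poset restricts to an exact (resp.\ sincere) representation on a suitable subposet, where the canonical weight \eqref{formulasCanCstab} (resp.\ \eqref{formulaCanStab}) is checked to be \emph{positive}; Propositions \ref{cstLift} and \ref{stLift} then produce a positive weight for the full representation. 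This lifting is not automatic --- the naive extension of the weight by zeros on the new elements destroys positivity, and the propositions have to rescale the old weight and choose the new components carefully to keep all $\theta_s>0$ while preserving the strict inequalities on proper subrepresentations. Without invoking (or reproving) these two propositions, your argument establishes positive (co)stability only for the exact/sincere representations themselves, not for an arbitrary indecomposable of an arbitrary finite-type poset.
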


The implication $(c)\Rightarrow (a)$ follows from Proposition \ref{PropSchur}. Indeed, if $\Poe$ has the infinite representation type then there exist indecomposable representations whose endomorphism ring is not a division algebra. Therefore, they cannot be stable  by Corollary \ref{CorSchur}.

The implication $(b)\Rightarrow (a)$ follows from  Corollary \ref{NonSchurCostab} and the fact that any poset of infinite type has indecomposable coordinate representations whose endomorphism ring is not a division algebra. 

\subsection{Exact posets of finite type}

Recall that a poset $\Po$ is called \textit{exact} if it admits an exact representation.   A complete list of exact posets of finite type and their sincere representations was obtained in \cite{Kleiner2} (see also \cite[Chapter 10.7]{SimsonB}, for corrected list of exact representations). Namely, a non-primitive poset of finite type is exact if and only if it has one of the following forms:
\begin{equation} \label{critial_posets}
\begin{split}
\entrymodifiers={[o]} 
\xymatrix @C=0.22cm @R=0.5cm
{
  & &  & \resizebox{0.3cm}{!}{ } & & &  &  \resizebox{0.3cm}{!}{ }  & & &  &  \resizebox{0.3cm}{!}{ }  &  &  &  &  \resizebox{0.3cm}{!}{ }  & \bullet\ar@{-}[d]\ar@{-}[dr] & \bullet\ar@{-}[d] & \bullet \ar@{-}[d]\ar@{-}[dl]& \resizebox{0.3cm}{!}{ }  & \bullet\ar@{-}[d] & \bullet \ar@{-}[d]\ar@{-}[dddr]\ar@{-}[dl]& \bullet\ar@{-}[d]& \\
  & &  & \resizebox{0.3cm}{!}{ } &   & & \bullet\ar@{-}[d] &  \resizebox{0.3cm}{!}{ }  & \bullet\ar@{-}[d]\ar@{-}[rdd] &\bullet\ar@{-}[d] &  &  \resizebox{0.3cm}{!}{ }  & \bullet\ar@{-}[d]\ar@{-}[dr] & \bullet\ar@{-}[d]\ar@{-}[ddl] & \bullet\ar@{-}[d] &  \resizebox{0.3cm}{!}{ }  & \bullet & \bullet & \bullet\ar@{-}[d] & \resizebox{0.3cm}{!}{ }  & \bullet & \bullet& \bullet\ar@{-}[d]& \\
 \bullet\ar@{-}[d]\ar@{-}[rd] & \bullet\ar@{-}[d] & \bullet\ar@{-}[d] & \resizebox{0.3cm}{!}{ } & \bullet\ar@{-}[d]\ar@{-}[rd]  & \bullet\ar@{-}[d] & \bullet\ar@{-}[d] &  \resizebox{0.3cm}{!}{ }  & \bullet\ar@{-}[d] &\bullet\ar@{-}[d] &  &  \resizebox{0.3cm}{!}{ }  & \bullet\ar@{-}[d] & \bullet & \bullet\ar@{-}[d] &  \resizebox{0.3cm}{!}{ }  &  & & \bullet\ar@{-}[d] & \resizebox{0.3cm}{!}{ }  &  & & \bullet\ar@{-}[d] & \\
 \bullet & \bullet & \bullet &, \resizebox{0.3cm}{!}{ } & \bullet  & \bullet & \bullet & , \resizebox{0.3cm}{!}{ }  & \bullet &\bullet & \bullet & , \resizebox{0.3cm}{!}{ }  & \bullet &  & \bullet & , \resizebox{0.3cm}{!}{ }  &  &  & \bullet &, \resizebox{0.3cm}{!}{ }  &  &  &  \bullet & \\
  & \mathcal S_1 & & &  & \mathcal S_2 & & &
  & \mathcal S_3 & & &  & \mathcal S_4 & & &
  & \mathcal S_5 & & &  & \mathcal S_6 & & &
  }
\end{split}
\end{equation}
For each non-primitive sincere poset $\mathcal S_1,\dots,\mathcal S_6$  we list all its exact representations in the following table. 

\def\arraystretch{1.2}
\begin{longtable}{|c|p{10.5cm}|}
\hline
\textit{Poset} & \textit{Exact representations} 
 \\
\hline
 $\mathcal S_1$ & $(K^3;\Su{123},\Su{1,2,3};\Su{1},\Su{1,2};\Su{3},\Su{2,3})$ \\
 \hline
$\mathcal S_2$ & 1) $(K^3;\Su{3},\Su{1,2,3};\Su{123},\Su{13,2};\Su{1},\Su{1,2},\Su{1,2,3})$\\
& 2) $(K^4;\Su{14},\Su{1,2,4};\Su{4},\Su{123,4};\Su{3},\Su{2,3},\Su{1,2,3})$\\
& 3) $(K^4;\Su{14},\Su{1,2,4};\Su{4},\Su{12,23,4};\Su{3},\Su{2,3},\Su{1,2,3})$\\
& 4) $(K^4;\Su{1,24},\Su{1,2,3,4};\Su{4},\Su{123,4};\Su{3},\Su{2,3},\Su{1,2,3})$\\
& 5) $(K^4;\Su{1,24},\Su{1,2,3,4};\Su{4},\Su{12,13,4};\Su{3},\Su{2,3},\Su{1,2,3})$\\
& 6) $(K^5;\Su{15,4},\Su{1,2,4,5};\Su{5},\Su{123,24,5};\Su{3},\Su{2,3},\Su{1,2,3})$\\
& 7) $(K^5;\Su{3,5},\Su{2,3,4,5};\Su{45},\Su{134,24,45};\Su{1},\Su{1,2},\Su{1,2,3,4})$\\
& 8) $(K^5;\Su{1,25},\Su{1,2,3,5};\Su{5},\Su{13,234,5};\Su{4},\Su{2,3,4},\Su{1,2,3,4})$\\
& 9) $(K^5;\Su{1,25},\Su{1,2,3,5};\Su{5},\Su{123,24,5};\Su{3,4},\Su{2,3,4},\Su{1,2,3,4})^*$\\
\hline
$\mathcal S_3$ & $(K^4;\Su{4},\Su{1,4},\Su{1,2,3,4};\Su{3},\Su{2,3},\Su{1,2,3};\Su{123,24})$\\
\hline
$\mathcal S_4$ & $(K^4;\Su{4},\Su{3,4},\Su{1,2,3,4};\Su{234},\Su{12,23,4};\Su{1},\Su{1,2},\Su{1,2,3})$\\
\hline
$\mathcal S_5$ & $(K^5;\Su{125,13},\Su{1,2,3,5};\Su{5},\Su{1,24,5};\Su{4},\Su{3,4},\Su{2,3,4},\Su{1,2,3,4,5})$\\
\hline
$\mathcal S_6$ & $(K^5;\Su{1,25},\Su{1,3,25};\Su{5},\Su{1,2,3,4,5};\Su{4},\Su{3,4},\Su{2,3,4},\Su{1,2,3,4,5})$\\
\hline
\end{longtable}
We used the following notation: $K^n$ denotes the  vector space over $\F$ with the canonical basis $e_1,\dots, e_n$ and 
$K_{i_1\dots i_k,\dots,j_1\dots j_m}$ denotes the subspace of $K^n$ generated by the vectors 
$e_{i_1\dots i_k}, \dots, e_{j_1\dots j_m}$ where
$$
	e_{i_1\dots i_k}=e_{i_1}+\dots+e_{i_k}, \dots, e_{j_1\dots j_m}=e_{j_1}+\dots+e_{j_m}.
$$

\subsection{Proof of the implication $(a)\Rightarrow (b)$} 

\begin{prop} \label{thmDScho}
Suppose that $\Po$ has a finite representation type, and  $\V$ is a Schurian representation of $\V$. Then
	\begin{equation} \label{eqDStab}
		\Df{\Po}(\cdn \W,\cdn \V)-\Df{\Po}(\cdn \V,\cdn \W)>0,
	\end{equation}	
for any proper subrepresentation $\W$ of $\V$.
\end{prop}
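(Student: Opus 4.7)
The plan is to recast the inequality \eqref{eqDStab} as a costability statement and then to verify it via the classification of Schurian indecomposables of finite-type posets. Setting
$$
\thv(\bev) := \Df{\Po}(\cdn \V, \bev) - \Df{\Po}(\bev, \cdn \V),
$$
equivalently the row vector $\thv = \cdn \V \cdot (\Ti{\Po} - \Ti{\Po}^{tr})$, antisymmetry gives $\thv(\cdn \V)=0$ for free, and \eqref{eqDStab} is exactly the condition $\thv(\cdn \W) < 0$ for every proper subrepresentation $\W$ of $\V$. Thus Proposition \ref{thmDScho} is equivalent to asserting that every Schurian $\V$ in a finite-type $\cats{\Po}$ is $\thv$-costable with this specific antisymmetric weight.

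First I would reduce to the case that $\V$ is sincere. A non-sincere Schurian indecomposable $\V$ is supported on an upward-closed full subposet $\Po_0 \subsetneq \Po$ on which it is sincere; since $V_s = 0$ for $s \notin \Po_0$, every subrepresentation $\W$ has $W_s = 0$ and $\cdn \W(s) = 0$ outside $\Po_0$ as well, so both sides of \eqref{eqDStab} coincide with the analogous expressions computed inside $\cats{\Po_0}$. Hence we may, and do, assume $\V$ is sincere on $\Po$.

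Next, I would split according to whether $\Po$ is primitive. In the primitive case every representation is coordinate, so Proposition \ref{corEqualityofForms} replaces the Tits form $\Df{\Po}$ on cdn-vectors by the bilinear form $\Bf{\Po}$ on dim-vectors, and the inequality
$$
\Bf{\Po}(\dimv \W, \dimv \V) - \Bf{\Po}(\dimv \V, \dimv \W) > 0
$$
becomes a transparent combinatorial statement about chain differences, verifiable using the Kleiner--Nazarova description of sincere indecomposables of primitive finite-type posets. In the non-primitive sincere case, the Kleiner list recalled in \eqref{critial_posets} forces $\Po \in \{\mathcal{S}_1,\dots,\mathcal{S}_6\}$, and the sincere exact representations of $\Po$ are precisely those in the table preceding the proposition. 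For each such $\V$ one enumerates the proper subrepresentations $\V_K$ up to isomorphism, computes $\cdn \V_K$, and checks the inequality by direct substitution in the explicit formula
$$
\Df{\Po}(\alv,\bev) = \sum_{s\in \Po}\alpha_s\beta_s + \sum_{t\prec s}\alpha_s\beta_t - \alpha_0\sum_{s\in \Po}\beta_s.
$$

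The principal technical obstacle is the poset $\mathcal{S}_2$, which carries nine distinct exact representations in ambient dimensions up to $5$, each generating many inequivalent proper subrepresentations. A useful simplification is that no proper $\W$ can have $\cdn \W = \cdn \V$, since equality at the $0$-entry forces $W_0 = V_0$ and hence $\W = \V$; this removes a large family of candidates before any arithmetic is done, and the remaining subrepresentations admit a uniform description in terms of intersections with coordinate subspaces of the ambient space, whose $\cdn$ values and contributions to $\Df{\Po}$ can be tabulated and verified in finitely many steps---the content of Appendix B of the arXiv version.
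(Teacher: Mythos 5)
Your overall architecture mirrors the paper's: rewrite \eqref{eqDStab} as costability for the antisymmetric weight $\thv=\cdn\V\cdot(\Ti{\Po}-\Ti{\Po}^{tr})$, reduce to a classified family, treat the primitive case via Proposition \ref{corEqualityofForms}, and finish the non-primitive case by the finite verification recorded in Appendix~B. The non-primitive verification is exactly what the paper does. In the primitive case, however, the paper does not enumerate indecomposables: it identifies $\Df{\Po}$ on coordinate vectors with the Tits form of the Hasse quiver of $\Poe$ and invokes Schofield's characterization of Schurian roots, which yields the inequality for \emph{every} Schurian representation at once. Your proposed substitute (checking the inequality against the Kleiner--Nazarova list of sincere indecomposables of $(1,1,1)$, $(1,2,2)$, $(1,2,3)$, $(1,2,4)$) is in principle finite, but it is asserted rather than carried out, and it inherits the coverage problem described below.

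The genuine gap is in your reduction step. You reduce to representations with $V_s\neq 0$ for all $s$, which is valid as far as it goes, but it does not land you in the lists you then use. First, a non-primitive finite-type poset carrying a Schurian representation with all subspaces nonzero need not be one of $\mathcal S_1,\dots,\mathcal S_6$: those are the \emph{exact} posets, i.e.\ the ones admitting a representation with $\supp\cdn\V=\Poe$. For example, the poset $\{a,b\prec c\}$ with $a,b$ incomparable is non-primitive, of finite type, not in the list, and carries Schurian representations with every $V_s\neq 0$; your case analysis simply does not reach it. The correct reduction, which the paper uses (tersely), is to the support of $\cdn\V$, not of $\dimv\V$ --- and that reduction is not the routine restriction you perform for sincerity. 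If $\cdn\V(s)=0$, a proper subrepresentation $\W=\V_K$ can still have $\cdn\W(s)=\dim\bigl((V_s\cap K)/\sum_{t\prec s}(V_t\cap K)\bigr)\neq 0$, because intersection with $K$ does not commute with the sum $\sum_{t\prec s}V_t$. Consequently the quantity $\Df{\Po}(\cdn\W,\cdn\V)-\Df{\Po}(\cdn\V,\cdn\W)$ computed over $\Po$ does \emph{not} coincide with the analogous quantity over the subposet $\supp\cdn\V$: the coordinates of $\cdn\W$ sitting outside $\supp\cdn\V$ contribute the extra terms $\cdn\W(u)\bigl(\sum_{t\prec u}\cdn\V(t)-\sum_{s\succ u}\cdn\V(s)+\cdn\V(0)\bigr)$, which must be controlled. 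Your argument needs either this reduction made precise, or a direct treatment of non-exact Schurian representations on non-primitive finite-type posets; as written, that class is not covered.
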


\begin{proof} 
It is clear that it is enough to check the statement  in case $\Po$ is exact. 
If $\Po$ is primitive the claim  follows immediately from Corollary \ref{corEqualityofForms} and Schofield's characterization of Schurian roots for acyclic quivers \cite[Theorem 6.1]{sch}. 
Indeed, in this case any representation of $\Po$ corresponds to a representation of an unbound Hasse quiver $\Q{\Poe}$ of $\Poe$. Also, any representation of $\Po$ is coordinate. Hence, using  Corollary \ref{corEqualityofForms} and the fact that in this case 
$\Df{\Po}(\cdn \W,\cdn \V)=\Bf{\Po}(\dimv \W,\dimv \V)$ coincides with the usual Tits form of $\Q{\Poe}$, we apply \cite[Theorem 5]{sch} to prove that \eqref{eqDStab} holds.

Now assume that $\Po$ is non-primitive exact. For each representation we completely describe all maximal subcoordinate dimensions (see Appendix B in ArXiv version of the manuscript for the details). The statement now follows by direct verification of conditions \eqref{eqDStab}.
\end{proof}

By Proposition \ref{thmDScho} we have that any indecomposable $\V\in \cats{\Po}$ with coordinate dimension $\alv=(\alpha_0;\alpha_s)_{s\in \Po}$ of a poset of finite type is costable with a form $\thv\in \Zn^{\Poe}$ given by
\begin{equation*}
	\thv(\bev)=\Df{\Po}(\alv,\bev)-\Df{\Po}(\bev,\alv). 
\end{equation*}
It is straightforward to check that the components of this form are 
\begin{equation}\label{formulasCanCstab}
	\theta_0=-\sum_{s\in \Po} \alpha_s, \qquad \theta_s=\sum_{s\prec t\in \Poe} \alpha_t-\sum_{t\prec s\in \Poe} \alpha_t.
\end{equation}
For instance, a unique exact representation of a poset $\mathcal S_1$ is costable with a form $(-6;4,1;5,2;4,2)$.

Now observe that if a representation is exact  then for a fixed $s\in \Po$ we have 
$$
	\sum_{s\prec t\in \Poe} \alpha_t > \alpha_0 > \sum_{t\prec s\in \Poe} \alpha_t.
$$
Therefore each $\theta_s>0, s\in \Po$ and any exact representation is positively costable. Now  the implication $(a)\Rightarrow (b)$ follows from Proposition \ref{cstLift}.

\subsection{Proof of the implication $(a)\Rightarrow (c)$}

To prove the implication $(a)\Rightarrow (c)$ we show the analogue of 
Proposition \ref{thmDScho} for so-called sincere representations and their dimension vectors.

We call a representation $\V=(V_0;V_s)_{s\in \Po}$ \textit{sincere}
if it is indecomposable, $\dimv \V$ is sincere and $V_s\neq V_t$ if $s\prec t$ in $\Poe$. Respectively, $\Poe$ is called \textit{sincere} if it
has at least one sincere representation. The following proposition describes all sincere posets of finite
type. 

\begin{prop} The set of sincere posets of finite type consists of four primitive posets $(1,1,1)$, $(1,2,2)$, $(1,2,3)$, $(1,2,4)$ and non-primitive posets $\mathcal S_1,\dots, \mathcal S_6$.
\end{prop}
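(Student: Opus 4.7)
The plan is to reduce the classification to Kleiner's known enumeration of finite-type posets (used already in this section to characterise finite type via avoidance of the five critical subposets of \eqref{critial_posets_}), and then to check, case by case, which of those finitely many candidate posets actually admit a representation that is sincere in the refined sense just introduced, namely indecomposable with fully supported $\dimv\V$ and strict inclusions $V_s\subsetneq V_t$ for every $s\prec t$ in $\Poe$.

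First I would rule out the ``narrow'' primitive posets by elementary decomposition arguments. For a single chain $(n)$, every indecomposable representation in $\cats{\Po}$ is an interval module whose local dimensions lie in $\{0,1\}$; hence if $\dimv\V$ is fully supported then every nonzero $V_s$ equals $\F$, forcing equalities $V_s=V_t$ along the chain and contradicting the refined sincerity. A similar short argument (splitting $V_0$ into lines adapted to $V_{s_i}$) shows that the two-chain primitives $(a,b)$ have no refined-sincere indecomposable. Consequently any sincere primitive poset has at least three incomparable chains.

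For primitive posets with exactly three chains, the finite type restriction of Kleiner (avoidance of $(2,2,2)$, $(1,3,3)$, $(1,2,5)$) leaves only a short explicit list. I would then invoke Kleiner's enumeration of the Schurian indecomposables of these primitive posets to check case by case which admit a refined-sincere indecomposable: one verifies directly that the posets $(1,1,1)$, $(1,2,2)$, $(1,2,3)$, $(1,2,4)$ do (e.g.\ for $(1,1,1)$ take $V_0=\F^2$ with three distinct lines as $V_1,V_2,V_3$; analogous flag constructions handle the other three), whereas the remaining finite-type primitives such as $(1,1,n)$ for $n\geq 2$ or $(1,2,n)$ for $n\geq 5$ either fail to be finite type or force a collapse $V_{s_i}=V_{s_{i+1}}$ on every indecomposable with the required support.

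For the non-primitive case, the plan is to appeal to Kleiner's classification of non-primitive posets of finite representation type together with the list of their sincere indecomposables in \cite{Kleiner2} and \cite[Chap.~10.7]{SimsonB}. That classification already shows that the non-primitive finite-type posets carrying any fully supported indecomposable are exactly $\mathcal S_1,\ldots,\mathcal S_6$ of \eqref{critial_posets}. For each $\mathcal S_i$, the refined sincerity requirement is then verified by producing an explicit sincere representation; the exact representations tabulated just above do the job, since exactness of the coordinate vector together with their concrete description implies both that $\dimv\V$ is sincere and that all comparable subspaces are strictly contained. Conversely, any non-primitive finite-type poset not among the $\mathcal S_i$ fails to be even sincere in the weaker sense, so certainly not in the refined sense, again by Kleiner's list.

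The main obstacle, and the only step that is not short, is the case analysis for non-primitive finite-type posets; this is essentially the content of Kleiner's combinatorial classification and I would cite \cite{Kleiner2, SimsonB} rather than reconstruct it. The secondary subtlety is making sure the \emph{refinement} of sincerity (the distinctness condition on $V_s,V_t$) does not exclude any of the six $\mathcal S_i$ or admit any additional poset excluded by the coarser sincerity used classically; this is handled uniformly by noting that the exact representations in the table already exhibit strict containments everywhere, and by the chain/two-chain argument above for the excluded primitive cases.
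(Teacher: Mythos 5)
Your primitive-case analysis is reasonable in outline (though the exclusion of $(1,1,n)$ for $n\geq 2$ is asserted rather than checked), but the non-primitive half of your argument has a genuine gap: you treat ``admits a fully supported indecomposable with strict inclusions'' as something Kleiner's classification already answers, whereas what Kleiner and \cite[Ch.~10.7]{SimsonB} classify are the \emph{exact} posets, i.e.\ those carrying an indecomposable whose \emph{coordinate} vector $\cdn\V$ is fully supported. Sincerity in the refined sense of this section ($\dimv\V$ fully supported and $V_s\subsetneq V_t$ for all $s\prec t$ in $\Poe$, in particular $V_s\neq V_0$) neither implies nor is implied by exactness, so the coincidence of the two lists of non-primitive posets is a conclusion to be proved, not an input. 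Concretely, your proposed witnesses fail: the tabulated exact representation of $\mathcal S_1$ is $(K^3;\Su{123},\Su{1,2,3};\dots)$ with $\Su{1,2,3}=K^3=V_0$, violating the strict-inclusion requirement at the top of the first chain, and the same defect occurs for the exact representations of $\mathcal S_3,\dots,\mathcal S_6$ and for several of those of $\mathcal S_2$. Exactness does force $V_s\neq V_t$ whenever $s\prec t$ inside $\Po$, but it says nothing about whether $V_s=V_0$. In the opposite direction your exclusion step is also unsupported: a non-exact finite-type poset could a priori carry a sincere representation in which some $V_s$ equals $\sum_{t\prec s}V_t$ while all inclusions stay strict, and the list of exact posets does not rule this out.

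The paper closes both gaps with a reduction-and-extension argument that your proposal is missing. Given a sincere $\V$ on a finite-type $\Po$, either $\V$ is exact with $V_s\neq V_0$ for all $s$, or $V_s=\sum_{t\prec s}V_t$ at some $s$, in which case deleting $s$ yields a sincere indecomposable of the smaller poset $\Po\setminus\{s\}$; iterating terminates at one of the base cases $(1,1,1)$, $(1,2,2)$, $(1,2,3)$, $(1,2,4)$, $\mathcal S_2$, the only exact posets possessing an exact representation with all $V_s\neq V_0$. Reversing the deletions, every sincere poset is obtained from a base case by repeatedly adjoining an element $\tilde s$ above a subset $\mathcal I$ with $V_{\tilde s}=\sum_{s\in\mathcal I}V_s\neq V_0$, and enumerating this terminating procedure is what actually produces $\mathcal S_1,\mathcal S_3,\dots,\mathcal S_6$ together with their sincere (not exact) representations and certifies that nothing else arises. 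If you wish to keep a direct case-check, you must construct sincere witnesses for $\mathcal S_1,\mathcal S_3,\dots,\mathcal S_6$ by hand (the table of sincere representations in the paper, not the table of exact ones) and separately prove non-existence for every other non-primitive finite-type poset; as written, both halves are asserted rather than proved.
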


\begin{proof}
Let $\Po$ be a poset of finite type,  $\V\in \cats{\Po}$  its sincere representation. Precisely one of the following cases occurs:
\begin{itemize}
\item[i)] $\V$ is exact representation of $\Po$ with $V_s\neq V_0$ for all $s\in \Po$.
\item[ii)] $\V$ is non-exact representation at some $s\in\Po$, therefore $V_s=\sum_{t\prec s}V_t$.
\end{itemize}
In the first case $\Po$ is in list \eqref{critial_posets} of exact posets. In the second case $\V$ generates an indecomposable 
representation (denoted by $\V^1$) of the reduced poset $\Po_s=\Po \setminus {s}$. Obviously, $\V^1$ is a sincere representation
of $\Po_s$ and therefore it satisfies either 1) or 2) above. 
Proceeding in this way we eventually obtain an exact 
representation of some poset $\Po_{s_1,\dots,s_k}$ with 
$V_s\neq V_0,\ s\in \Po_{s_1,\dots,s_k}$ from the table \eqref{critial_posets}.

Summing up we have the following procedure to describe all
sincere posets and their sincere representations:
\begin{enumerate}
\item All exact posets which admit exact and at the same time sincere representation $\V$ (that is, $V_i \neq V_0$)  are precisely $(1,1,1)$, $(1,2,2)$, $(1,2,3)$,
$(1,2,4)$ and $\mathcal{S}_2$;
\item Let $\Po$ be a sincere poset and $\V$ its sincere representation. 
Let $\mathcal I$ be a subset of $\Po$  such that $\sum_{s\in \mathcal I}V_s \neq V_0$. Define an extended poset  $\Po^{\mathcal I}=(\Po \cup \{\widetilde s\},\prec_{\mathcal I})$ with a partial order defined in such a way that its restriction to $\Po$ coincides with $\prec$ and $s\prec_{\mathcal I} \widetilde s$, for all $s\in \mathcal I$.
Let $V^{\mathcal I}$ be a representation of $
\Po^{\mathcal I}$ given by $V^{\mathcal I}_{s}=V_s$ for all $s\in\Po$ and
$V^{\mathcal I}_{\tilde s}=\sum_{s\in I} V_s$. Evidently, $V^{\mathcal I}$
is a sincere representation and therefore $\Po^{\mathcal I}$ is a sincere poset.
\end{enumerate}
The above procedure clearly terminates as the dimensions of $V_0$ are bounded.
Hence inductively we obtain all sincere posets and
all their sincere representations.
\end{proof}

Proceeding as in the proof of the previous proposition we obtain the following list of all 
sincere representations of sincere posets $\Po_1,\dots,\Po_6$:

\def\arraystretch{1.3}
\begin{longtable}{|c|p{10.5cm}|}
\hline
\textit{Poset} & \textit{Sincere representations} 
 \\
\hline
 $\mathcal S_1$ & $(K^3;\Su{123},\Su{1,23};\Su{1},\Su{1,2};\Su{3},\Su{2,3})$ \\
 \hline
$\mathcal S_2$ & $(K^4;\Su{123,24},\Su{13,2,4};\Su{4},\Su{1,4};\Su{3},\Su{2,3},\Su{1,2,3})$\\
& $(K^4;\Su{124,13},\Su{12,13,4};\Su{4},\Su{1,2,4};\Su{3},\Su{2,3},\Su{1,2,3})$\\
\hline
$\mathcal S_3$ & $(K^4;\Su{4},\Su{1,4},\Su{1,3,4};\Su{3},\Su{2,3},\Su{1,2,3};\Su{123,24})$\\
\hline
$\mathcal S_4$ & $(K^4;\Su{4},\Su{123,4},\Su{1,23,4};\Su{14},\Su{1,2,4};\Su{3},\Su{2,3},\Su{1,2,3})$\\
\hline
$\mathcal S_5$ & $(K^5;\Su{15,4},\Su{1,2,4,5};\Su{5},\Su{123,24,5};\Su{3},\Su{2,3},\Su{1,2,3};\Su{1,2,3,5})$\\
\hline
$\mathcal S_6$ & $(K^5;\Su{5},\Su{1,2,5};\Su{134,235},\Su{13,23,4,5};\Su{4},\Su{3,4},\Su{2,3,4};\Su{1,2,3,4})$\\
\hline
\end{longtable}

Similarly to Proposition \ref{thmDScho} one proves the following:

\begin{prop} \label{propPosit}
 Let $\Po$ be a sincere poset and $\V$ its sincere representation. Then $\V$ is 
 stable with a form 
 \begin{equation} \label{ShSinc}
 	\thv(\W)=\Bf{\Po}(\dimv \V,\dimv \W)-\Bf{\Po}(\dimv \W,\dimv \V).
\end{equation}
\end{prop}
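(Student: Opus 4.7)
The plan is to imitate the proof of Proposition \ref{thmDScho} verbatim, replacing coordinate vectors and the form $\Df{\Po}$ by dimension vectors and the form $\Bf{\Po}$. The vanishing $\thv(\dimv \V)=0$ is automatic from the antisymmetrization defining $\thv$, so the only substance is the strict inequality $\thv(\dimv \W)<0$ for each proper subrepresentation $\W$ of $\V$. The classification of sincere posets of finite type recorded in the previous proposition splits the argument into the four primitive cases $(1,1,1),(1,2,2),(1,2,3),(1,2,4)$ and the six non-primitive cases $\mathcal S_1,\ldots,\mathcal S_6$.

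For the primitive case, every representation of $\Po$ (in particular every subrepresentation of $\V$) is coordinate, so Proposition \ref{corEqualityofForms} translates the target inequality into
$$
\Df{\Po}(\cdn \V,\cdn \W)-\Df{\Po}(\cdn \W,\cdn \V)<0.
$$
The form $\Df{\Po}$ is precisely the Euler form of the acyclic Hasse quiver $\Q{\Poe}$, and the sincere indecomposable $\V$ is Schurian because $\Po$ has finite representation type. Hence Schofield's characterization of Schurian roots for acyclic quivers \cite[Theorem 6.1]{sch} supplies the desired strict inequality.

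For the six non-primitive sincere posets, the finite table of sincere representations given above reduces the problem to a finite case check. For every pair $(\mathcal S_i,\V)$ I would enumerate all maximal proper subrepresentations $\W$, extract their dimension vectors, and verify $\Bf{\Po}(\dimv \V,\dimv \W)-\Bf{\Po}(\dimv \W,\dimv \V)<0$ by direct computation using the explicit matrix $\Mi{\Poe}^{-1}$. The bookkeeping mirrors the material used for Proposition \ref{thmDScho} and can be placed in a parallel appendix. The main obstacle is precisely this enumeration: although the inventory of sincere representations is short, the families of proper subrepresentations are infinite as subspace configurations, and one must argue that only finitely many distinct dimension vectors appear among them, so that the inequality reduces to a finite arithmetic check. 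Once this finite list of extremal dimension vectors is extracted—the most delicate cases being $\mathcal S_5$ and $\mathcal S_6$, whose ambient spaces are five-dimensional—the verification itself is routine.
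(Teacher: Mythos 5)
Your proposal follows the paper's own route: the primitive cases are handled exactly as in Proposition \ref{thmDScho} via Proposition \ref{corEqualityofForms} and Schofield's criterion, and the six non-primitive sincere posets are dispatched by listing the maximal subdimension vectors of each sincere representation and checking the inequality $\Bf{\Po}(\dimv \W,\dimv \V)-\Bf{\Po}(\dimv \V,\dimv \W)>0$ case by case (the paper's Appendix~C). The finiteness worry you flag is harmless, since every component of $\dimv\W$ is bounded by the corresponding component of $\dimv\V$; the only real labour, as in the paper, is identifying which of these finitely many vectors are actually realized as maximal subdimension vectors.
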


To prove this proposition we describe the set of maximal subdimensions for all sincere representations of posets of finite type and check the stability conditions  
$\eqref{ShSinc}$. The details are given in Appendix C of ArXiv version of the manuscript.

If $\Po$ is sincere than it is straighforward to see (see also, \cite[Proposition 4.2]{simson}) that 
$$
	\Bf{\Po}(\alv,\bev)=\alv \cdot \Mi{\Poe}^{-1} \cdot \bev^{tr}=\sum_{s\in \Poe} \alpha_s\beta_s-\sum_{s\to t \in \Poe}\alpha_s\beta_t+\sum_{s,t \in \Poe}r(s,t)\alpha_s\beta_t,
$$ 
in which $r(s,t)$ is the maximal number of $\F$-linear independent minimal commutativity relations with the source $s$ and the terminus $t$. By Proposition \ref{propPosit} we have that any sincere $\V\in \cats{\Po}$ with dimension $\alv=(\alpha_0;\alpha_s)_{s\in \Po}$ of a poset of finite type is stable with a form $\thv\in \Zn^{\Poe}$ given by
\begin{equation} \label{canocinalStab}
	\thv(\bev)=\Bf{\Po}(\alv,\bev)-\Bf{\Po}(\bev,\alv). 
\end{equation}
One checks that the components of this form are:
\begin{equation}\label{formulaCanStab}
	\theta_0=-\sum_{s\to 0 \in \Poe} \alpha_s+\sum_{s\in \Po} r(s,0)\alpha_s, \qquad 
	\theta_s=\sum_{s \to t\in \Poe} \alpha_t-\sum_{t \to s\in \Poe} \alpha_t-\sum_{t\in \Poe} r(s,t)\alpha_t.
\end{equation}
For instance, a unique sincere representation of a poset $\mathcal S_1$ is stable with a form 
$(-6;2,1;1,2;2,2)$. By examining each sincere poset we check that the components $\theta_i$ are positive. Therefore, each sincere representation of a poset of finite type is positively stable with the form defined by \eqref{formulaCanStab}.

Now let $\V$ be an indecomposable representation of $\Poe$ of finite type. Hence, there is a sincere subposet $\widetilde {\mathcal I}$ of $\Poe$ such that the restriction $\V_{\widetilde {\mathcal I}}$ of $\V$ to $\tilde I$ is a sincere representation. The representation $\V_{\widetilde {\mathcal I}}$ is positively stable by considerations above. Then  the representation $\V$ is positively stable by Proposition \ref{stLift}. The implication $(a)\Rightarrow (c)$ follows.

\section{Geometric stability} \label{secGeometric}

In this section we assume that $\F$ is algebraically closed. Fix the poset $\Po$ and the admissible dimension vector $\alv=(\alpha_0;\alpha_s)_{s\in \Po}$. As we mentioned above the variety $\R{\alv}{\Po}$ is projective and the group $\GL(\alv_0)$ acts on $\R{\alv}{\Po}$ diagonally. Our goal is to understand the quotient space $\R{\alv}{\Po}/\GL(\alv_0)$. As usual the main problem is that the quotient space is rarely a projective variety. One possible approach is to construct a ``good'' quotient is via Geometric Invariant Theory (GIT). 
We briefly recall this approach, for details we refer to \cite{Dolgachev} (for general approach), to \cite{King} (where the author constructed the good quotients for representations of quivers) and to \cite{reineke} (where the author motivated the geometric approach to the  classification problem of quiver representations and  discussed topological, arithmetic and algebraic methods for the study of moduli spaces).

\subsection{Brief review of GIT quotients}\label{GITreview}

Let $G$ be a reductive group acting on a projective algebraic variety $X$. The GIT approach  consists of the following steps. First one chooses a linearization of the action, that is, a $G$-equivariant embedding of $X$ into a projective space $\mathbb P^n$ with a linear action of $G$ (via representation of $G$ in $\GL(n+1)$). An embedding of $X$ to $\mathbb P^n$ is defined by choosing a line bundle $L$ over $X$ (which is ample iff the emdedding is closed) and the set of its sections $f_0,\dots,f_n$ (which form a basis in the space of sections $\Gamma(X,L)$).  Then one specifies (with respect to $L$) the sets $X^{ss}(L)$, $X^{s}(L)$, $X^{us}(L)$ of semi-stable, stable and regular points respectively on $X$, where
\begin{itemize}
\item[(i)] $x\in X$ is called \textit{semi-stable} if there exist $m>0$ and $f\in \Gamma(X,L^m)^G$ such that $X_f=\{y\in X \ | \ f(y)\neq 0\}$ is affine and contains $x$;
\item[(ii)] $x\in X$ is called \textit{stable} if it is semi-stable, stabilizer of $x$ is finite and $G$-action on $X_f$ is closed; 
\item[(iii)] $x\in X$ is called \textit{unstable} if it is not semi-stable.
\end{itemize}
The central point GIT is that there exists an algebraic quotient of $X$ by $G$, denoted by $X//G$, which can be described as the quotient of the open set of $X^{ss}(L)$ of semistable points by the equivalence relation: $x\sim y$ if and only if the orbit closures $\overline{G\cdot x}$ and $\overline{G\cdot y}$ intersects (in $X^{ss}(L)$). Therefore the points of $X//G$ are in one-one correspondence with the closed orbits in $X^{ss}(L)$. Note that in case $L$ is ample then  (see \cite[Proposition 8.1]{Dolgachev})
\begin{equation} \label{projModuli}
	X^{ss}(L)//G \cong \textrm{Proj} \Big ( \oplus_{n\geq 0} \Gamma(X,L^{\otimes n})^G \Big ),
\end{equation}
and $X^{ss}(L)//G$ is a projective variety. The variety $X^{s}(L)/G$ is a \textit{geometric quotient}, which parametrizes the stable orbits. 

A powerful tool to describe stable points is the Hilbert-Mumford numerical criterion of stability, which is stated in terms of the action to one-parameter subgroups of $G$. Let $x^{*} \in \F^{n+1}$ be a representative of $x\in X\subset \mathbb P^{n}$ and $\lambda: \F^{*}\to G$ (regular morphism) be a one-parameter subgroup of $G$. Then (in appropriate coordinates) it acts by:
$$
	\lambda(t)\cdot x^{*}=(t^{m_0}x_0,\dots,t^{m_n}x_n).
$$
Set 
\begin{align*} 
\upmu^L(x,\lambda)=\min_{t}\{m_i:x_i\neq 0\}.
\end{align*} The Hilbert-Mumform numerical criterion claims (see \cite[Theorem 9.1]{Dolgachev} for  details) that 
\begin{equation} \label{HMcriterion}
\begin{split}
	x\in X^{ss}(L) & \Leftrightarrow \upmu^L(x,\lambda) \leq 0,	\\
	x\in X^{s}(L) & \Leftrightarrow \upmu^L(x,\lambda) < 0,
\end{split}
\end{equation}
for all one-parameter subgroup of $G$.

\subsection{Linearization of $\SL(\alv_0)$-action.} \label{secLinearization}
First note that the orbits of $\GL(\alpha_0)$-action on $\R{\alv}{\Po}$ are in one-one correspondence with the orbits of $\SL(\alpha_0)$, so we study the action of $\SL(\alpha_0)$.
Fix a form $\thv=(\theta_s)_{s\in \Po}\in \mathbb Z^{\Po}$
with $\theta_s \geq 0$ for all $s\in \Po$.
  As shown in Proposition \ref{posvar} the variety $\R{\alv}{\Po}$ is closed in the product of Grassmanians $\prod_{s\in \Po} \Gr(\alpha_s,\alpha_0)$. Our first aim is to embed the variety $\R{\alv}{\Po}$ into some larger projective space corresponding to linearizing action of $\SL(\alpha_0)$. We use a slightly modified standard construction (see, for example, \cite[Chapter 11]{Dolgachev} and \cite{Knu}).

A standard way to embed $\Gr(\alpha_s,\alpha_0)$ into a projective space 
is via Plucker embedding, that is, for an element
$V_i \in \textrm{Gr}(\alpha_s,\alpha_0)$ we take its basis vectors $a_j$ and
wedge them together $a_1 \wedge\cdots\wedge a_{\alpha_s}$ obtaining an element of
$\mathbb P(\wedge^{\alpha_s}\F^{\alpha_0})$. Then using the Veronese map we embed the projective space $\mathbb P (V)$ into the space $\mathbb P({\textrm{Sym}^d(V)})$. Respectively, for the product of Grassmanians
  $\prod_{s \in \Po} \textrm{Gr}(\alpha_s,\alpha_0)$ we have the embedding
  \begin{eqnarray*}
        \prod_{s \in \Po} \Gr(\alpha_s,\alpha_0)
        \hookrightarrow \prod_{s \in \Po}
        \mathbb P(\textrm{Sym}^{\theta_s}(\wedge^{\alpha_s}\F^{\alpha_0})).
  \end{eqnarray*}
  Using the Segre map $\mathbb P^n\times \mathbb P^m \hookrightarrow \mathbb
  P^{(n+1)(m+1)-1}$ we embed the last product into
  \begin{eqnarray*}
     \mathbb P\left( \prod_{s \in \mathcal P} \textrm{Sym}^{\theta_s}(\wedge^{d_s}\F^{d_0})\right).
 \end{eqnarray*}
Hence, we have the following sequence of inclusions:
\begin{eqnarray*}
        \Gr(\alpha_s,\alpha_0) \hookrightarrow \mathbb P(\wedge^{\alpha_s}\F
        ^{\alpha_0}) \hookrightarrow \mathbb P ({\textrm{Sym}^{\theta_s}(\wedge^{\alpha_s}\F^{\alpha_0})}).
\end{eqnarray*}
And, therefore we get the following closed embedding of $\R{\alv}{\Po}$:
\begin{eqnarray*}
        \R{\alv}{\Po} \hookrightarrow \mathbb P(\wedge^{\alpha_s}\F
        ^{\alpha_0}) \hookrightarrow \mathbb P ({\textrm{Sym}^{\theta_s}(\wedge^{\alpha_s}\F^{\alpha_0})}).
\end{eqnarray*}
As embedding above is closed, the corresponding line bundle $L_\thv$ is ample. Note that $L_\thv$ has exactly one $\SL(\alpha_0)$ linearization, since the center of $\SL(\alpha_0)$ is $0$-dimensional. Our aim is to describe the set of semistable $\R{\alv}{\Po}^{\thv-ss}$ and stable $\R{\alv}{\Po}^{\thv-s}$ points with respect to $L_\thv$ (we adopt the arguments from \cite[Theorem 11.1]{Dolgachev}, \cite[Theorem 2.2]{Hu} and \cite{mfk}).

\begin{thm}
	Let $\thv=(\theta_s)_{s\in \Po}\in \Zn^{\Po}_{+}$. Then $\V=(V_0;V_s)_{s\in \Po} \in \R{\alv}{\Po}^{\thv-ss}$ (resp. $\in \R{\alv}{\Po}^{\thv-s}$) if and only if for any proper subrepresentation $\W\subset \V$ we have $\mu_{\thv}(\W)\leq \mu_{\thv}(\V)$ (resp. the strict inequality holds);
	that is, if and only if $\V$ is $\mu_\thv$-semistable (resp. $\mu_\thv$-stable).
\end{thm}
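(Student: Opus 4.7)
The plan is to apply the Hilbert-Mumford numerical criterion \eqref{HMcriterion} to the $\SL(\alpha_0)$-action linearised by the ample bundle $L_\thv$ built in Section \ref{secLinearization}. Fix a non-trivial one-parameter subgroup $\lambda:\F^{*}\to \SL(\alpha_0)$. Diagonalising $\lambda$ produces a weight decomposition $V_0=\bigoplus_n V_0^{(n)}$ (with $\sum_n n\dim V_0^{(n)}=0$ because $\lambda$ lands in $\SL$) together with the decreasing filtration $V_0^{\geq n}=\bigoplus_{m\geq n}V_0^{(m)}$. The intersections $V_s\cap V_0^{\geq n}$ assemble into proper subrepresentations $\V^{\geq n}=\V_{V_0^{\geq n}}\in \cats{\Po}$, in the sense of Section \ref{secDefn}.

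The key computation is that of $\upmu^{L_\thv}(\V,\lambda)$. Since the embedding is built by Pl\"ucker, then Veronese of degree $\theta_s$ on each Grassmannian factor, then Segre, the Hilbert-Mumford weight decomposes as
\[
\upmu^{L_\thv}(\V,\lambda)=\sum_{s\in \Po}\theta_s\cdot \upmu(V_s,\lambda),
\]
where $\upmu(V_s,\lambda)$ is the minimum $\lambda$-weight on the Pl\"ucker line $\wedge^{\alpha_s}V_s\subset \wedge^{\alpha_s}V_0$. Choosing a basis of $V_s$ compatible with the induced filtration gives the standard identity
\[
\upmu(V_s,\lambda)=\sum_n n\cdot\big(\dim(V_s\cap V_0^{\geq n})-\dim(V_s\cap V_0^{\geq n+1})\big).
\]
Abel summation, combined with the trace condition $\sum_n n\dim V_0^{(n)}=0$, rewrites the total as
\[
\upmu^{L_\thv}(\V,\lambda)=\sum_{n>n_{\min}}\dim V_0^{\geq n}\cdot\big(\mu_\thv(\V^{\geq n})-\mu_\thv(\V)\big),
\]
that is, as a non-negative linear combination of the slope differences $\mu_\thv(\V^{\geq n})-\mu_\thv(\V)$ over the proper subrepresentations $\V^{\geq n}$ induced by $\lambda$.

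To conclude, I run the two implications separately. For ``slope-(semi)stable $\Rightarrow$ geometric (semi)stable'', the displayed rewriting instantly yields $\upmu^{L_\thv}(\V,\lambda)\leq 0$ (resp.\ $<0$) for every $\lambda$, as soon as every proper $\V^{\geq n}$ satisfies $\mu_\thv(\V^{\geq n})\leq \mu_\thv(\V)$ (resp.\ strictly, noting that at least one proper $\V^{\geq n}$ occurs for any non-trivial $\lambda$). Conversely, given any proper subrepresentation $\W=\V_K$, take the diagonal $\lambda$ with weight $\alpha_0-\dim K$ on $K$ and weight $-\dim K$ on a fixed complement of $K$ in $V_0$; then $K=V_0^{\geq n}$ for $-\dim K<n\leq \alpha_0-\dim K$ and the formula specialises to
\[
\upmu^{L_\thv}(\V,\lambda)=\alpha_0\cdot \dim K\cdot\big(\mu_\thv(\W)-\mu_\thv(\V)\big),
\]
so that the Hilbert-Mumford condition translates exactly into the desired slope inequality. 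The main technical obstacle is the precise weight computation through the composite Pl\"ucker-Veronese-Segre embedding and its re-expression by Abel summation as a combination of slope differences; the non-abelian nature of $\cats{\Po}$ causes no difficulty here, because the proper subrepresentations $\V_K$ (Section \ref{secDefn}) are exactly the subobjects produced by weight filtrations of one-parameter subgroups.
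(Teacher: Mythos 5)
Your proposal is correct and follows essentially the same route as the paper: both compute the Hilbert--Mumford weight of a one-parameter subgroup through the Pl\"ucker--Veronese--Segre linearization via the jump positions of the induced filtration on each $V_s$, and both recognize that the result is a non-negative combination of slope differences $\mu_\thv(\V_K)-\mu_\thv(\V)$ over the subrepresentations cut out by the weight filtration, with the converse obtained from the ``two-step'' subgroups attached to a given proper subspace $K$. Your Abel-summation rewriting is just a repackaging of the paper's reduction by linearity to the extreme one-parameter subgroups $\lambda_r$, so the two arguments coincide in substance.
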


\begin{proof}
Let $n=\alpha_0=\dim V_0$ and $T$ be the maximal torus in $\SL(n)$. Each one-parameter subgroup $\lambda: \F^*\rightarrow T$ is conjugated to a diagonal one. Therefore, we assume that
$$
	\lambda(t)=\diag\{t^{q_1},\dots,t^{q_n}\},
$$
where $q_1+\dots+q_n=0$. Without loss the generality we can assume that $q_1\geq\dots\geq q_n$. Also, it is a standard fact that all such groups form a convex set with extreme points $\lambda_r: \F^*\rightarrow T$ given by 
$$
	\lambda_r(t)=\diag\{t^{q_1},\dots,t^{q_n}\},
$$ 
such that $q_1=\dots=q_r=n-r$, $q_{r+1}=\dots=q_{n}=-r.$

Suppose that $\V=(V_0;V_s)_{s\in \Po}$ is a semistable point.  Choose a basis $v_1,\dots,v_n$ of $V_0$. 
Set $H_i=\spa\{v_1,\dots,v_i\}$, $i=1,\dots,n$ (in particular, we have $H_n=V_0$ and $H_r=W$). Let $K$ be any subspace of $V_0$. Then for any integer $j$, $1\leq j \leq s=\dim K$, there is a unique integer $m_j$ such that
$$
	\dim(K \cap H_{m_j})=j, \quad \dim(K \cap H_{m_j-1})=j-1.
$$
Therefore we can represent $K$ (in the basis $e_1,\dots,e_n$) by the matrix $A_K$ of the form
$$
	A_K=\left[
	\begin{matrix}
		a_{11} & \dots & a_{1m_1} & 0 & \dots & 0 & 0 & \dots & 0\\
		a_{21} & \dots & \dots & a_{2m_2} & \dots & 0 & 0 & \dots & 0\\
		\vdots & \vdots & \vdots & \vdots & \vdots & \vdots & \vdots & \vdots & \vdots\\
		a_{k1} & \dots & \dots & \dots & \dots & \dots & a_{km_k} & \dots & 0\\
	\end{matrix}
	\right]^{T},
$$
with $a_{jmj}\neq 0$ for all $j$. Considering the maximal minors of $A_K$ we have that in the Plucker embedding $p_{i_1\dots i_k}(K)=0$ if $i_j>m_j$ and $p_{m_0\dots m_k}(K)\neq 0$. Also from the matrix representation of $K$ we get 
$$
	p_{i_1\dots i_k}(\lambda(t) K)=t^{q_{i_1}+\dots+q_{i_k}} p_{i_1\dots i_k}(K).
$$
Applying the procedure above to all subspaces $V_s$ in $\V$ we get the numbers 
$m_1^{(s)},\dots,m_{\alpha_s}^{(s)}$ for all $s\in \Po$ and we have (by thw minimality of the numerical function) that
$$
	\upmu^{L_\thv}(\V,\lambda)=\sum_{s \in \Po} \theta_s 
	\sum_{i=1}^{\alpha_i} q_{m_{j}^{(s)}}. 
$$
Now, since $\dim(V_s \cap H_j)-\dim(V_s \cap H_{j-1})=0$ if $j\neq m_j^{(s)}$, we rewrite the previous sum as follows:
\begin{equation*}
\begin{split}
	\upmu^{L_\thv}(\V,\lambda)&=\sum_{s \in \Po} \theta_s 
	\sum_{i=1}^{n} q_i \big(\dim(V_s \cap H_i)-\dim(V_s \cap H_{i-1})\big)\\
	&=\sum_{s \in \Po} \theta_s \big(\alpha_s q_n+\sum_{i=1}^{n-1}(\dim(V_s \cap H_j)(q_i-q_{i+1})\big)  \\
	&=q_n \sum_{s\in \Po} \theta_s\alpha_s +\sum_{j=1}^{n-1}\Big(\sum_{s\in \Po} \theta_s \dim (V_s\cap H_j) (q_j-q_{j_1}) \Big ).
\end{split}
\end{equation*}
Note that $\upmu^{L_\thv}(\V,\lambda)$ is linear in $(q_1,\dots,q_n)$. Therefore replacing t $\lambda$ by a subgroup $\lambda_s$ we get
$$
	\upmu^{L_\thv}(\V,\lambda_r)=-r\sum_{s\in \Po} \theta_s\alpha_s + n \sum_{s\in \Po} \theta_s\dim (V_s\cap H_r).
$$
By the Hilbert-Mumford numerical criteria \eqref{HMcriterion} we have that if $\V$ is semistable (resp. stable) then  $\upmu^{L_\thv}(\V,\lambda_r)\leq 0$ (resp. $\upmu^{L_\thv}(\V,\lambda_r)< 0$), which is the same as 
$$
	\mu_{\thv}(\W)\leq \mu_{\thv}(\V), \quad \mbox{(resp } <),
$$
 where $\W=(H_r;V_s\cap H_r)_{s\in \Po}$ is a proper subrepresentation 
of $\V$. Hence $\V$ is $\mu_\thv$-semistable (resp. stable). 

Conversely, let $\V$ is $\thv$-semistable but not semistable with respest to $L_\thv$. Then there exist a one-parameter subgroup $\lambda$ such that $\upmu^{L_\thv}(\V,\lambda)>0$. Hence, there must exist $1\leq r\leq n-1$ such that $\upmu^{L_\thv}(\V,\lambda_r)>0$, which is equivalent to  $$\mu_{\thv}((H;V_s\cap H)_{s\in \Po})>\mu_{\thv}(\V)$$ for some $r$-dimensional subspace $H$ of $V_0$. Therefore $\V$ is not $\thv$-semistable. Contradiction.  
Similarly one proves the sufficiency of conditions for the strict inequality.
\end{proof}

\begin{cor}
	If the dimension vector $\alv$ satisfies $\thv(\bev)\neq 0$ for all $0\neq\bev < \alv$, then
	\begin{equation} \label{eqSSandST}
	 \R{\alv}{\Po}^{\thv-ss}=\R{\alv}{\Po}^{\thv-s}.
	\end{equation}
\end{cor}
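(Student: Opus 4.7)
The plan is a short contrapositive argument that reduces immediately to the theorem just proved. First, observe that by construction $\R{\alv}{\Po}^{\thv-s}\subseteq \R{\alv}{\Po}^{\thv-ss}$, so only the reverse inclusion needs attention. Suppose for contradiction that $\V=(V_0;V_s)_{s\in \Po}$ lies in $\R{\alv}{\Po}^{\thv-ss}$ but not in $\R{\alv}{\Po}^{\thv-s}$. By the theorem we have $\mu_\thv(\W)\leq \mu_\thv(\V)$ for every proper subrepresentation of $\V$, and the failure of strict stability means there exists at least one proper subrepresentation $\W=(W_0;W_s)_{s\in \Po}$ for which the equality $\mu_\thv(\W)=\mu_\thv(\V)$ is attained.

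The next step is to convert this equality of slopes into a linear vanishing condition that can be pitted against the hypothesis. Writing $\bev=\dimv\, \W$ and $\alv=\dimv\, \V$, the identity $\mu_\thv(\W)=\mu_\thv(\V)$ is equivalent to
\begin{equation*}
\alpha_0\sum_{s\in\Po}\theta_s\beta_s \;=\; \beta_0\sum_{s\in\Po}\theta_s\alpha_s.
\end{equation*}
Equivalently, extending $\thv$ to the form on $\Zn^{\Poe}$ by declaring the $0$-component to be $\theta_0=-\mu_\thv(\V)$ (after clearing the denominator $\alpha_0$ by rescaling, so the extended form is integral and annihilates $\alv$), the condition becomes $\thv(\bev)=0$. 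Since $\W$ is a proper subrepresentation we have $W_0\subsetneq V_0$, so $\beta_0<\alpha_0$; combined with $\bev\leq \alv$ componentwise and $\bev\neq 0$ (as $\W$ is a subrepresentation of the semistable $\V$, it is nonzero), this gives a subdimension vector $0\neq \bev<\alv$ with $\thv(\bev)=0$, directly contradicting the hypothesis.

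Therefore the assumed $\V$ cannot exist, and the two sets coincide. The only minor obstacle is the bookkeeping in the previous paragraph: one must check that the normalization used to turn the slope equality into a genuine linear condition $\thv(\bev)=0$ is consistent with the form $\thv$ referred to in the hypothesis of the corollary. This is just the usual convention (as in \cite{King}) of identifying the stability parameter $(\theta_s)_{s\in \Po}$ with the linear functional on $\Zn^{\Poe}$ whose $0$-component is fixed by the requirement $\thv(\alv)=0$, so no genuine difficulty arises.
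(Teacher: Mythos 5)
Your argument is correct and is exactly the content behind the paper's one-line proof (``each semistable representation is already stable''): a semistable but non-stable $\V$ has a proper subrepresentation $\W$ with $\mu_\thv(\W)=\mu_\thv(\V)$, and under the King-style normalization $\thv(\alv)=0$ this is precisely $\thv(\dimv\W)=0$ with $0\neq\dimv\W<\alv$, contradicting the hypothesis. The extra bookkeeping you supply (extending $\thv$ by a $\theta_0$-component annihilating $\alv$ and noting $\beta_0<\alpha_0$) is the right way to make the paper's assertion precise.
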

\begin{proof}
	Indeed, in this case each semistable representation is already stable.
\end{proof}
Note that  if $\alv$ is coprime (that is, $\textrm{gcd}(\alpha_s:s\in \Poe)=1$) then the equality \eqref{eqSSandST} holds for the generic choice of $\thv$.

\subsection{Polystable representations}

We start by the following proposition (see also \cite[Proposition 3.1]{Hu}).

\begin{prop}\label{directpolytable}
Let $\thv=(\theta_s)_{s\in \Po}$. Assume that $\V\in \cats{\Po}$ is $\mu_\thv$-semistable and $\V=\oplus_{i=1}^{l} \W_i$ is a direct sum of subrepresentations. Then $\mu_\thv(\W_i)=\mu_\thv(\V)$ and $\W_i$ are $\mu_\thv$-semistable.
\end{prop}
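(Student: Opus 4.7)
The plan is to exploit the fact that when $\V = \bigoplus_{i=1}^{l} \W_i$, both the numerator and denominator of the slope $\mu_\thv$ are additive over summands, so $\mu_\thv(\V)$ is a convex combination of the slopes $\mu_\thv(\W_i)$ with positive weights $\dim (W_i)_0$; namely,
\[
  \mu_\thv(\V) \;=\; \frac{\sum_{i=1}^{l} \dim (W_i)_0 \cdot \mu_\thv(\W_i)}{\sum_{i=1}^{l} \dim (W_i)_0}.
\]
Combined with the semistability hypothesis on $\V$, this alone delivers both claims.

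First I would verify that each summand sits inside $\V$ as a proper subrepresentation in the sense of Section 2.1. Writing $K_i = (W_i)_0$, one needs $\W_i = \V_{K_i} = (K_i; V_s \cap K_i)_{s\in \Po}$, i.e.\ $(W_i)_s = V_s \cap (W_i)_0$. This follows immediately from $V_s = \bigoplus_{j=1}^{l} (W_j)_s$. For $l>1$ the subspace $K_i$ is proper in $V_0$, and hence $\W_i$ is a proper subrepresentation of $\V$ (the case $l=1$ is trivial).

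For the first assertion, semistability of $\V$ gives $\mu_\thv(\W_i) \le \mu_\thv(\V)$ for every $i$. Since the displayed formula expresses $\mu_\thv(\V)$ as a convex combination of the $\mu_\thv(\W_i)$ with strictly positive weights, any strict inequality $\mu_\thv(\W_i) < \mu_\thv(\V)$ would pull the average strictly below $\mu_\thv(\V)$, a contradiction. Hence $\mu_\thv(\W_i) = \mu_\thv(\V)$ for all $i$.

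For the second assertion, fix $i$ and let $\U = (\W_i)_L$ be any proper subrepresentation of $\W_i$, where $L \subsetneq (W_i)_0$. Put $K = L \oplus \bigoplus_{j \neq i} (W_j)_0$, a proper subspace of $V_0$. Using the direct sum decomposition of each $V_s$ one checks that an element $\sum_j u_j \in V_s$ (with $u_j \in (W_j)_s$) lies in $K$ iff $u_i \in L$, so
\[
  \V_K \;=\; \U \,\oplus\, \bigoplus_{j \ne i} \W_j.
\]
Applying semistability of $\V$ to $\V_K$ and using the convex combination formula together with $\mu_\thv(\W_j) = \mu_\thv(\V)$ for $j\neq i$ (from the first step), one concludes $\mu_\thv(\U) \le \mu_\thv(\V) = \mu_\thv(\W_i)$, which is the desired semistability of $\W_i$.

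No deep obstacle is anticipated; the only delicate point is the routine verification that the ambient-subspace construction $\V_K$ recovers the expected direct sum, and that direct summands themselves qualify as proper subrepresentations in the technical sense of Section 2.1. Both reduce to the same linear algebra observation about intersections with direct sums.
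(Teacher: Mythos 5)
Your proof is correct and rests on the same underlying fact as the paper's argument: the slope of a direct sum is a weighted average (with positive weights) of the slopes of the summands — the paper packages this as two applications of Proposition \ref{seesaw} to the exact sequences $0\to\W_1\to\V\to\W_2\to 0$ and $0\to\W_2\to\V\to\W_1\to 0$, while you write out the convex combination explicitly and also spell out the semistability of each $\W_i$, which the paper leaves implicit. One simplification for your second step: since $(W_i)_s=V_s\cap (W_i)_0$ and $L\subseteq (W_i)_0$, any proper subrepresentation $(\W_i)_L$ of $\W_i$ coincides with $\V_L$ and is therefore already a proper subrepresentation of $\V$, so semistability of $\V$ gives $\mu_\thv((\W_i)_L)\le\mu_\thv(\V)=\mu_\thv(\W_i)$ directly, without the detour through $K=L\oplus\bigoplus_{j\ne i}(W_j)_0$.
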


\begin{proof}
	Assume that $\V=\W_1\oplus \W_2$. Then
	$$
		\ses{\W_1}{\V}{\W_2},		
	$$
	and 
	$$
		\ses{\W_2}{\V}{\W_1}.		
	$$
	As $\V$ is semistable then $\mu_\thv(\W_1)\leq \mu_\thv(\V)$ and $\mu_\thv(\W_2)\leq \mu_\thv(\V)$. By Proposition \ref{seesaw} we have that $\mu_\thv(\W_1)\geq \mu_\thv(\V)$ and $\mu_\thv(\W_2)\geq \mu_\thv(\V)$.  The statement follows.
\end{proof}

A $\mu_\thv$-semistable representation $\V$ will be called $\mu_\thv$-\textit{polystable} if it decomposes into a direct sum of finitely many $\mu_\thv$-stable subrepresentations.  Similarly to \cite[Proposition 3.3]{Hu} one proves the following:

\begin{prop} \label{polystable-closed}
	$\V$ is $\mu_\thv$-polystable if and only if the orbit of $\V$ in $\R{\alv}{\Po}^{\thv-ss}$ is closed.
\end{prop}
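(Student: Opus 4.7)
The plan is to adapt the argument of \cite[Proposition 3.3]{Hu} by exploiting the correspondence between one-parameter subgroups of $\SL(\alpha_0)$ and filtrations of $\V$ by proper subrepresentations. Given $\lambda:\F^* \to \SL(\alpha_0)$ with weight decomposition $V_0 = \bigoplus_k V_0^{(k)}$, set $V_0^{\geq k} = \bigoplus_{j \geq k} V_0^{(j)}$ and define $\V^{\geq k} = \V_{V_0^{\geq k}} = (V_0^{\geq k}; V_s \cap V_0^{\geq k})_{s\in\Po}$. This is a descending filtration by proper subrepresentations, and the limit $\lim_{t\to 0}\lambda(t)\cdot \V$ exists in $\R{\alv}{\Po}$ and is canonically isomorphic to the associated graded $\gr_\lambda(\V)=\bigoplus_k \V^{\geq k}/\V^{\geq k+1}$. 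Moreover, by the same computation as in the proof of the previous theorem, $\upmu^{L_\thv}(\V,\lambda)$ coincides with the weighted sum over $k$ of the differences $\mu_\thv(\V^{\geq k})-\mu_\thv(\V)$ (up to the positive normalizing factor $\dim V_0$).

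For the implication $(\Leftarrow)$, assume the orbit $\SL(\alpha_0)\cdot \V$ is closed in $\R{\alv}{\Po}^{\thv-ss}$. Apply the Jordan--H\"older filtration from Section~\ref{secDefn} to obtain a filtration $0=\V^0\subset \V^1\subset\cdots\subset \V^h=\V$ with $\mu_\thv$-stable quotients, all of slope $\mu_\thv(\V)$. Choosing a basis of $V_0$ compatible with the flag of ambient subspaces, I would construct a one-parameter subgroup $\lambda$ whose associated filtration is exactly the given one. The associated graded $\gr(\V)$ lies in $\overline{\SL(\alpha_0)\cdot \V}$, has the same dimension vector $\alv$, and is $\mu_\thv$-semistable (being a direct sum of stable objects of slope $\mu_\thv(\V)$), hence belongs to $\R{\alv}{\Po}^{\thv-ss}$. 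Closedness of the orbit forces $\V\cong\gr(\V)$, so $\V$ is $\mu_\thv$-polystable.

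For the implication $(\Rightarrow)$, assume $\V=\bigoplus_i \V_i$ with each $\V_i$ $\mu_\thv$-stable of slope $\mu_\thv(\V)$. Suppose for contradiction that $\V' \in \overline{\SL(\alpha_0)\cdot \V}\cap \R{\alv}{\Po}^{\thv-ss}$ does not lie in $\SL(\alpha_0)\cdot \V$. The Hilbert--Mumford criterion produces a one-parameter subgroup $\lambda$ with $\lim_{t\to 0}\lambda(t)\cdot\V \in \SL(\alpha_0)\cdot\V'$. Because $\V'$ is semistable, $\upmu^{L_\thv}(\V,\lambda)=0$, which by the formula above forces $\mu_\thv(\V^{\geq k})=\mu_\thv(\V)$ for every term of the $\lambda$-associated filtration. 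Using Proposition~\ref{PropSchur} together with Corollary~\ref{CorSchur}, morphisms between the stable summands $\V_i$ of the same slope are either zero or isomorphisms, so the equal-slope subrepresentations $\V^{\geq k}$ are forced to be direct sums of some of the $\V_i$'s (their intersections with the summands project isomorphically onto whole summands). Consequently each successive quotient $\V^{\geq k}/\V^{\geq k+1}$ is a direct sum of $\V_i$'s, and the associated graded is non-canonically isomorphic to $\V$ itself. This contradicts $\V'\not\cong \V$ and shows the orbit is closed.

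The main obstacle is the last step of the second implication: in the additive (non-abelian) category $\cats{\Po}$, one has to verify that an equal-slope proper filtration of a polystable $\V$ actually splits compatibly with its decomposition. The argument rests entirely on the Schur-type rigidity of the stable summands (Proposition~\ref{PropSchur}) and on the fact that proper subrepresentations are recorded by a single subspace $K\subset V_0$, so that intersections with the $\V_i$ are again proper subrepresentations. Making this splitting precise for iterated filtrations is the delicate point that replaces the short-exact-sequence arguments available in the quiver setting.
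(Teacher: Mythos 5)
Your outline is essentially the paper's own argument: the paper gives no details here and simply asserts the result ``similarly to \cite[Proposition 3.3]{Hu}'', and your sketch is precisely the standard adaptation of that proof (degeneration to the associated graded of a Jordan--H\"older filtration in one direction, Schur-type rigidity of equal-slope filtrations of a polystable object in the other), including a correct identification of the one genuinely delicate point in the additive setting. The only step to tighten is the use of Hilbert--Mumford in the forward implication: a one-parameter subgroup degenerating $\V$ into $\SL(\alpha_0)\cdot\V'$ is guaranteed (by Kempf's theorem on the affine charts of the semistable locus) only when that target orbit is closed, so you should first replace $\V'$ by a point of the unique closed orbit contained in $\overline{\SL(\alpha_0)\cdot\V'}$, which still lies outside $\SL(\alpha_0)\cdot\V$ for dimension reasons.
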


As a conseguence of this proposition we have that $\R{\alv}{\Po}^{\thv-ss}//\SL(\alpha_0)$ parametrizes $\mu_\thv$-polystable representations. Denote by $\cats{\Po}^{\thv-ps}$ the additive subcategory of $\cats{\Po}^{\thv-ps}$ consisting of $\mu_\thv$-polystable representations. Then $\cats{\Po}^{\thv-ps}$ is semisimple, where $\mu_\thv$-stable representations are precisely the simple objects.

\subsection{Moduli space of representations of posets} 
Let $\thv=(\theta_s)_{s\in \Po}\in \Zn^{\Po}$ and fix an admissible dimension vector $\alv\in \Zn^{\Poe}$. We will make the following identification: 
$$
	\M{\Po}{\alv}^{\thv-ss}=\R{\alv}{\Po}^{\thv-ss}//\SL(\alpha_0), \qquad \M{\Po}{\alv}^{\thv-s}=\R{\alv}{\Po}^{\thv-s}/\SL(\alpha_0).
$$

\begin{cor}
By \eqref{projModuli}, the variety $\M{\Po}{\alv}^{\thv-ss}$ is projective and by Proposition \ref{polystable-closed} it parametrizes the isomorphisms classes of $\mu_\thv$-polystable representations of $\Po$ of dimension vector $\alv$.
\end{cor}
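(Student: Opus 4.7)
The statement is essentially a bookkeeping consequence of the two facts that immediately precede it, so the plan is to assemble them rather than prove anything substantial from scratch. Concretely, I would split the claim into its two assertions and handle them separately.

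For projectivity, the plan is to appeal to the linearization constructed in Section \ref{secLinearization}: the composition of the Plücker, Veronese, and Segre embeddings realizes $\R{\alv}{\Po}$ as a closed subvariety of a projective space, so the line bundle $L_\thv$ is ample. Under the ampleness hypothesis, the general GIT identification in \eqref{projModuli} gives
\[
\M{\Po}{\alv}^{\thv-ss} \;=\; \R{\alv}{\Po}^{\thv-ss}//\SL(\alpha_0) \;\cong\; \mathrm{Proj}\Big(\bigoplus_{n\geq 0}\Gamma(\R{\alv}{\Po},L_\thv^{\otimes n})^{\SL(\alpha_0)}\Big),
\]
which is projective by construction.

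For the moduli interpretation, I would invoke the standard GIT fact recalled in Section \ref{GITreview}: the points of $\R{\alv}{\Po}^{\thv-ss}//\SL(\alpha_0)$ are in bijection with the closed $\SL(\alpha_0)$-orbits in $\R{\alv}{\Po}^{\thv-ss}$. By Proposition \ref{polystable-closed}, a point $\V$ has closed orbit in the semistable locus if and only if $\V$ is $\mu_\thv$-polystable. Finally, since the $\SL(\alpha_0)$-orbits on $\R{\alv}{\Po}$ agree with the $\GL(\alpha_0)$-orbits (their action factors through the same quotient up to scalars, as was noted at the start of Section \ref{secLinearization}), these orbits correspond bijectively to isomorphism classes of representations of $\Po$ with dimension vector $\alv$. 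Chaining these bijections yields the desired description.

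The main, and honestly only, obstacle is conceptual rather than technical: one needs to be careful that ``parametrizes'' is interpreted as a set-theoretic (or coarse) moduli statement, since nothing in the excerpt constructs a universal family or addresses the scheme-theoretic representability of a moduli functor. I would therefore state the corollary as a coarse moduli result and record explicitly that the bijection is between $\F$-points of $\M{\Po}{\alv}^{\thv-ss}$ and isomorphism classes of $\mu_\thv$-polystable representations of $\Po$ of dimension $\alv$. No further input is needed.
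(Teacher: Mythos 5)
Your argument is exactly the one the paper intends: projectivity comes from \eqref{projModuli} applied to the ample linearization $L_\thv$ constructed in Section \ref{secLinearization}, and the moduli interpretation comes from the closed-orbit description of GIT quotient points combined with Proposition \ref{polystable-closed} and the agreement of $\SL(\alpha_0)$- and $\GL(\alpha_0)$-orbits. Your closing caveat that this is a coarse (set-theoretic) parametrization is a fair clarification but does not alter the content.
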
  

\begin{cor}
	The variety $\M{\Po}{\alv}^{\thv-s}$ is open in $\M{\Po}{\alv}^{\thv-ss}$ and parametrizes the isomorphisms classes of $\mu_\thv$-stable representations of $\Po$ of dimension vector $\alv$.
\end{cor}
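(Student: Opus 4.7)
The plan is to deduce both assertions from the general GIT properties recalled in Section~\ref{GITreview}, combined with the identification of (semi)stable points with $\mu_\thv$-(semi)stable representations established in the preceding theorem.

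First, I would check that $\R{\alv}{\Po}^{\thv-s}$ is an open subset of $\R{\alv}{\Po}^{\thv-ss}$ that is saturated with respect to the quotient map $\pi:\R{\alv}{\Po}^{\thv-ss}\to \M{\Po}{\alv}^{\thv-ss}$, i.e.\ $\R{\alv}{\Po}^{\thv-s}=\pi^{-1}(\pi(\R{\alv}{\Po}^{\thv-s}))$. Openness is standard (the strict Hilbert-Mumford inequality defining stability is an open condition on $\R{\alv}{\Po}^{\thv-ss}$). Saturation follows from Proposition~\ref{polystable-closed}: any stable orbit is in particular polystable, and therefore its $\SL(\alpha_0)$-orbit is closed in $\R{\alv}{\Po}^{\thv-ss}$. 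Consequently, if $x\in \R{\alv}{\Po}^{\thv-s}$ and $y\in \R{\alv}{\Po}^{\thv-ss}$ satisfy $\overline{\SL(\alpha_0)\cdot x}\cap \overline{\SL(\alpha_0)\cdot y}\neq \emptyset$ in $\R{\alv}{\Po}^{\thv-ss}$, then closedness of $\SL(\alpha_0)\cdot x$ forces $y\in \SL(\alpha_0)\cdot x\subset \R{\alv}{\Po}^{\thv-s}$.

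Second, I would invoke the fact that the good categorical quotient $\pi$ sends open saturated subsets to open subsets. Applied to $\R{\alv}{\Po}^{\thv-s}$ this gives that $\M{\Po}{\alv}^{\thv-s}=\pi(\R{\alv}{\Po}^{\thv-s})$ is open in $\M{\Po}{\alv}^{\thv-ss}$, proving the first claim.

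Finally, since every stable orbit is already closed in $\R{\alv}{\Po}^{\thv-ss}$, the GIT equivalence relation (``orbit closures meet'') collapses to genuine orbit equality on $\R{\alv}{\Po}^{\thv-s}$. Therefore the restriction $\pi:\R{\alv}{\Po}^{\thv-s}\to \M{\Po}{\alv}^{\thv-s}$ is a geometric quotient whose fibres are exactly the $\SL(\alpha_0)$-orbits. Since $\SL(\alpha_0)$-orbits on $\R{\alv}{\Po}$ are in natural bijection with isomorphism classes of subspace representations of dimension $\alv$, and the preceding theorem identifies the points of $\R{\alv}{\Po}^{\thv-s}$ with the $\mu_\thv$-stable representations, the parametrization claim follows. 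I do not anticipate any substantial obstacle: the only non-formal input is the saturation of $\R{\alv}{\Po}^{\thv-s}$, which is an immediate consequence of Proposition~\ref{polystable-closed}.
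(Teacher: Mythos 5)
Your argument is correct and is essentially the standard GIT reasoning that the paper itself relies on without further comment: openness of the stable locus, its saturation under the quotient map, and the fact that a good quotient restricts to a geometric quotient on the stable locus (cf.\ Section~\ref{GITreview}, where $X^{s}(L)/G$ is already asserted to be a geometric quotient parametrizing stable orbits). One small imprecision in your saturation step: closedness of $\SL(\alpha_0)\cdot x$ alone only shows that it is the unique closed orbit contained in $\overline{\SL(\alpha_0)\cdot y}$, not that $y\in \SL(\alpha_0)\cdot x$; to finish you must also invoke the finite-stabilizer (maximal orbit dimension) clause in the definition of a stable point, or, more representation-theoretically, observe that a semistable representation whose associated graded is isomorphic to the stable representation $\V_x$ must itself be isomorphic to $\V_x$.
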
  

By Theorem \ref{dimVariety} we have that 
$\dim \R{\alv}{\Po}=\alpha_0^2-\Bf{\Po}(\alv,\alv)$, therefore if $\M{\Po}{\alv}^{\thv-s}$ is non-empty we have that 
\begin{equation} \label{dimFormula}
\begin{split}
	\dim \M{\Po}{\alv}^{\thv-s}&=\dim \R{\alv}{\Po} - \dim \SL(\alpha_0)\\
								  &=\dim \R{\alv}{\Po}-\alpha_0^2+1 \\
&=1-\Bf{\Po}(\alv,\alv).                           
\end{split}
\end{equation}
Note that this dimension formula is a direct analogue of the dimension formula for moduli space of $\mu_\thv$-stable representations of quiver $Q$ which given in terms of the quadratic form associated with $Q$ (see, for example, \cite[Section 3.5]{reineke}).

\subsection{Moduli spaces and Coxeter functors}

Assume that $\Po$ is primitive and $\alv\neq (1;0,\dots,0)_{s\in \Po}$. Due to Theorem \ref{CoxThm}  Coxeter transformation $\Fp$ (defined in Section \ref{secCoxeter}) 
gives rise to a map between moduli spaces:
$$
\Fp:\M{\Po}{\alv}^{\thv-ss}\to \M{\Po}{\Fp(\alv)}^{\Fp(\thv)-ss}.
$$
Applying $(\Fp)^n$, in certain cases (for instance when $\Po$ is of finite type, or when $\alv$ is preprojective) we are able to obtain the information about $\M{\Po}{\alv}^{\thv-ss}$ knowing it in simpler cases (e.g., one-dimensional cases). We believe that a more careful study of these maps deserves further attention.

\subsection{Examples.} 

\begin{ex} Assume that $\Po$ is a poset of finite type. By Theorem \ref{TheoremSection3} we have that if $\alv$ is an admissible indecomposable dimension then both sets 
$\R{\alv}{\Po}^{\thv-ss}$ and $\R{\alv}{\Po}^{\thv-s}$ are non-empty. Therefore 
$\M{\Po}{\alv}^{\thv-ss}$ and $\M{\Po}{\alv}^{\thv-s}$ are non-empty as well. As $\mathcal S$ is of finite type, the orbit of indecomposable $\V$ with dimension $\alv$ is dense in $\R{\alv}{\Po}$ therefore $\M{\Po}{\alv}^{\thv-s}$ consists of one point. 
\end{ex}

\begin{ex}
Now assume that the poset $\Po$ is one of the critical poset from list \eqref{critial_posets_}. Consider dimension vector $\alv_\Po$ which a minimal imaginary root of form $\Bf{\Po}$ (that is, minimal $\alv_\Po$ so that $\Bf{\Po}(\alv_\Po,\alv_\Po)=0$):
\begin{equation*}
\begin{split}
	\alv_{(1,1,1,1)}&=(2;1,1,1,1);\\
	\alv_{(2,2,2)}&=(3;1,2,1,2,1,2);\\	
	\alv_{(1,3,3)}&=(4;2,1,2,3,1,2,3);\\	
	\alv_{(1,2,5)}&=(6;3,2,4,1,2,3,4,5);\\	
	\alv_{(N,4)}&=(5;2,4,1,3,1,2,3,4).\\	
\end{split}
\end{equation*}
For instance, for unique non-primitive critical poset $(N,4)$ we have the following Hasse diagram (where the components of dimension vectors we place at corresponding vertices of the diagram)
\begin{center}
\begin{tikzpicture}[scale=1,auto,swap,shift=(current page.center)]
  \node (1) at (1,2) {$2$};
  \node (3) at (2,2) {$1$};
  \node (2) at (1,3) {$4$};
  \node (4) at (2,3) {$3$};
  \node (5) at (3,0) {$1$};
  \node (6) at (3,1) {$2$};
  \node (7) at (3,2) {$3$};
  \node (8) at (3,3) {$4$};
  \node (9) at (2,4) {$5$};
  \draw[-] (1) -- (2); \draw[-] (3) -- (2);
  \draw[-] (3) -- (4); \draw[-] (5) -- (6) -- (7) -- (8);
  \draw[loosely dotted] (2) -- (9);
  \draw[loosely dotted] (4) -- (9);
  \draw[loosely dotted] (8) -- (9);
\end{tikzpicture}
\end{center}
It is straightforward to see that both $\R{\alv}{\Po}^{\thv-ss}$ and $\R{\alv}{\Po}^{\thv-s}$ are non-empty for the choice of $\thv$ given by  formulas \eqref{canocinalStab}. Therefore the moduli spaces
$\M{\Po}{\alv}^{\thv-ss}$ and $\M{\Po}{\alv}^{\thv-s}$ are non-empty as well.  
By \eqref{dimFormula} we have $\dim \M{\Po}{\alv}^{\thv-s}=1$, as $\Bf{\Po}(\alv,\alv)=0$ in these cases. 
\end{ex}

\begin{ex}
Consider the poset $\Po$, such that the Hasse diagramm of $\Poe$ (with the components of admissible dimension vector $\alv$  placed in corresponding vertices) is given by: 
\begin{center}
\begin{tikzpicture}[scale=1,auto,swap,shift=(current page.center)]
  \node (1) at (0,2) {$1$};
  \node (2) at (1,2) {$1$};
  \node (3) at (2,2) {$1$};
  \node (5) at (0,3) {$3$};
  \node (6) at (1,3) {$3$};
  \node (7) at (2,3) {$3$};
  \node (8) at (3,3) {$2$};
  \node (9) at (4,3) {$2$};
  \node (10) at (2,4) {$4$};
  \draw[-] (1) -- (5); \draw[-] (1) -- (6);
  \draw[-] (2) -- (6); \draw[-] (2) -- (5);
  \draw[-] (2) -- (7); \draw[-] (3) -- (6);
  \draw[-] (3) -- (7); \draw[-] (3) -- (8); 
  \draw[loosely dotted] (5) -- (10);
  \draw[loosely dotted] (6) -- (10);
  \draw[loosely dotted] (7) -- (10);
  \draw[loosely dotted] (8) -- (10);
  \draw[loosely dotted] (9) -- (10);
\end{tikzpicture}
\end{center}

Again one easily construct the stable representation in $\R{\alv}{\Po}^{\thv-s}$ with respect to the choice of $\thv$ given by  formulas \eqref{canocinalStab}. For instance, the following representation is $\thv$-stable (we use the same notation as in Section 4)
$$
(K^4;K_1,K_2,K_3,K_{1,2,34},K_{1,2,3},K_{2,3,14},K_{3,14},K_{12,34}).
$$
Therefore the moduli spaces $\M{\Po}{\alv}^{\thv-s}$ is non-empty and by dimension formula \eqref{dimFormula} we have $\dim \M{\Po}{\alv}^{\thv-s}=2$, as $\Bf{\Po}(\alv,\alv)=-1$.
\end{ex}

\section{Moment map and unitary representation of posets} \label{secMoment}

\subsection{Unitary representation of posets}

We assume that $\F=\Cn$. 
By a \textit{unitary representation} of $\Po$ we mean a subspace representation $\U=(U_0;U_s)_{s\in \Po}$ in which the ambient $U_0$ is a unitary space. Two unitary representations $\U=(U_0;U_s)$ and $\U'=(U'_0;U'_s)$ of $\Po$ are unitarily equivalent if there exists a unitary bijection $\varphi: U_0\to U'_0$ such that $\varphi(U_s)=U'_s$ for all $s\in \Po$.  Result of \cite{Halmos} gives a complete classification of indecomposable systems of two unitary subspaces (which is already a non-finite problem). In \cite{BFKSY} the authors classified the posets which have finite, tame and wild unitary type. Note that the problem of classifying of unitary representations is wild even for the poset $\Po=\{s_1,s_2,s_3 \ | \ s_1\prec s_2 \}$. It turned out that the classification becomes possible for a broader class of posets if one imposes additional conditions on unitary representations (cf. \cite{KruglyakNazarovaRoiter, KruglyakRoiter, SamYus}).

We say that a unitary representation 
$\U=(U_0; U_s)_{s\in \Po}$ is a representation of weight $\chv=(\chi_s)_{s\in \Po} \in  \mathbb Z^{\Poe}_{+}$ (or $\chv$-\textit{representation}) if
\begin{equation}\label{chirelation}
 \sum_{s\in \Po} \chi_s P_{U_s}= \chi_0 I,
\end{equation}
where $P_M$ denote the orthogonal projection of $U_0$ onto subspace $M$, and $\chi_0\in \mathbb Q$ is determined by the trace identity of \eqref{chirelation}. 
All $\chv$-representations of $\mathcal P$ form an additive category denoted by 
$\ucats{\Po}{\chv}$.

There is an obvious (forgetfull) functor $\Fu:\ucats{\Po}{\chv}\to \cats{\Po}$ which relates to 
$\chv$-representation $\U=(U_0;U_s)_{s\in \Po}$ the underlying system of vector spaces (forgetting the inner product). We prove the following (see also \cite[Lemma 5]{SamYus})

\begin{prop} \label{functorF}
Let $\U=(U_0;U_s)_{s\in \Po}\in \ucats{\Po}{\chv}$ be $\chv$-representation. Then $\Fu(\U)$ is $\mu_\thv$-polystable with $\thv=(\chi_s)_{s\in \Po}$.
\end{prop}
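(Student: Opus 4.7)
The plan is to exploit the defining $\chv$-relation $\sum_{s\in \Po} \chi_s P_{U_s} = \chi_0 I$ by pairing it with suitable orthogonal projections and taking traces. Morally, this is exactly how a moment-map-zero condition forces (poly)stability, in the spirit of Kempf--Ness.

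First I would identify the slope. Taking the trace of the defining identity yields $\sum_{s\in \Po} \chi_s \dim U_s = \chi_0 \dim U_0$, so $\chi_0 = \mu_\thv(\Fu(\U))$.

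Next, for $\mu_\thv$-semistability, let $\W = \V_K$ be any proper subrepresentation. Conjugating the $\chv$-relation by $P_K$ and taking the trace on $U_0$ gives
$$
\chi_0 \dim K \;=\; \sum_{s \in \Po} \chi_s \,\mathrm{tr}(P_K P_{U_s} P_K).
$$
Each $P_K P_{U_s} P_K$ is a positive self-adjoint operator with spectrum in $[0,1]$, and its $1$-eigenspace contains $U_s \cap K$, so $\mathrm{tr}(P_K P_{U_s} P_K) \geq \dim(U_s \cap K)$. Combined with $\chi_s \geq 0$, this gives $\mu_\thv(\W) \leq \chi_0 = \mu_\thv(\Fu(\U))$, proving $\mu_\thv$-semistability.

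The main obstacle is the polystability step, which I would handle by induction on $\dim U_0$. If $\Fu(\U)$ is $\mu_\thv$-stable there is nothing to prove. Otherwise pick a proper $\W = \V_K$ with $\mu_\thv(\W) = \mu_\thv(\V)$; equality in the chain above then forces $\mathrm{tr}(P_K P_{U_s} P_K) = \dim(U_s \cap K)$ for every $s$ with $\chi_s > 0$, and the spectral bound sharpens to $P_K P_{U_s} P_K = P_{U_s \cap K}$. Using the standard Hilbert--Schmidt identity
$$
\mathrm{tr}\bigl([P_K, P_{U_s}]^{*}[P_K, P_{U_s}]\bigr) \;=\; 2\,\mathrm{tr}(P_K P_{U_s} P_K) - 2\,\mathrm{tr}\bigl((P_K P_{U_s} P_K)^2\bigr),
$$
the idempotency of $P_{U_s \cap K}$ forces $[P_K, P_{U_s}] = 0$, so $U_s = (U_s \cap K) \oplus (U_s \cap K^\perp)$ orthogonally. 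Hence $\Fu(\U) = \W \oplus \W'$ with $\W' = \V_{K^\perp}$, and restricting the $\chv$-relation to $K$ and $K^\perp$ respectively shows that both $\W$ and $\W'$ are themselves $\chv$-representations (with the same $\chi_0$). Induction on $\dim U_0$ then decomposes each factor into $\mu_\thv$-stable summands, all of slope $\chi_0$, yielding the desired polystable decomposition.
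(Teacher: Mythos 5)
Your proof is correct and follows essentially the same route as the paper's: both rest on the trace inequality $\mathrm{tr}(P_{U_s\cap K})\le \mathrm{tr}(P_{U_s}P_K)$ for orthogonal projections and on the observation that equality forces $[P_K,P_{U_s}]=0$, hence an orthogonal splitting into $\chv$-subrepresentations. The only organizational difference is that you run an induction on $\dim U_0$ splitting off equal-slope summands (and you prove the Halmos-type inequality and the commutation step explicitly via the spectral bound and the Hilbert--Schmidt identity), whereas the paper first shows an indecomposable $\chv$-representation is stable by contradiction and then decomposes.
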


\begin{proof}
First suppose that $\U$ is indecomposable. Equating the traces of both sides in \eqref{chirelation}, we get $\sum_{s \in \mathcal S} \chi_s \dim U_s=\chi_0 \dim U.$
If $M$ is any proper subspace of $U$ then $
    \sum_{s \in \mathcal S}\chi_s P_{U_s}P_M=\chi_0 P_M$. Therefore $\chi_0=\mu_\thv(\U)$.
Equating the traces of both sides in the last equality we get
\[
    \sum_{s \in \mathcal S}\chi_s {\rm{tr}}(P_{U_s}P_M)=\chi_0 \dim M.
\]
It follows from \cite{Halmos} that
${\rm{tr}}(P_{M_1\cap M_2}) \leq {\rm{tr}}(P_{M_1}P_{M_2})$  for each two subspaces $M_1$ and $M_2$, and so
\[
  \sum_{s \in \mathcal S}\chi_s {\rm{tr}}(P_{U_s\cap M})\leq\sum_{s \in \mathcal S}\chi_s {\rm{tr}}(P_{U_s}P_M)=\chi_0 \dim M.
\]
It remains to prove that the last inequality is strict. Indeed, assuming
that  ${\rm{tr}}(P_{U_s\cap M})={\rm{tr}}(P_{U_s}P_M)$ for all $s$, we obtain that each $P_{U_s}$ commutes with $P_M$. Hence, the subspace
$M$ is invariant with respect to the projections $P_{U_i}$ and the representation $\U$ is decomposable. This contradicts the assumption.	
Therefore, $\mu_\thv(\W)<\mu_\thv(\U)$ for any proper subrepresentation $\W$, and $\U$ is $\mu_\thv$-stable.

Now, if $\U$ is a decomposable $\chv$-representation, we get that $\U$ is $\mu_\thv$-semistable. Then, proceeding as in Proposition \ref{directpolytable} one proves that $\U$ is $\mu_\thv$-polystable.
\end{proof}

Having a subspace representation $\V=(V_0;V_s)_{s\in \Po}$ we say that it is \textit{$\chv$-unita\-ri\-zable} if there is an inner product on $V_0$ such that $\V$ is a $\chv$-representation with respect to this product.

 It was shown in  \cite{SamYus} that $\ucats{\Po}{\chv}$ has a finite number of unitarily non equivalent indecomposable representations for each weight $\chv$ if and only if $\Po$ has a finite type; that is, if and only if $\Po$ contains one of the Kleiner's critical posets. There are other similarities between $\chv$-representations and usual representations of poset (see, for example, \cite{WY13} and the references therein). In this section we explain these similarities via the Kempf-Ness theorem (which establishes the homeomorphism between GIT and symplectic quotients) and by constructing the functorial connection between the categories $\ucats{\Po}{\chv}$ and $\cats{\Po}$.

\subsection{Moment map, symplectic reduction and the Kempf-Ness theorem}
We briefly recall the idea behind the symplectic quotients and the Kempf-Ness theorem.
Suppose again that $G$ is a complex reductive group acting linearly on a smooth complex projective variety $X\subset \mathbb P^n$. Apart from taking GIT quotient (as in Section  \ref{GITreview}) one can alternatively consider the so-called symplectic quotient. 
As $G$ is a complex reductive group, it is equal to the complexification of its maximal compact subgroup $K$ (by $\mathfrak k$ we denote the corresponding Lie algebra of $K$). Complex projective space $\mathbb P^n$ has a natural K\"{a}hler structure given by the Fubini-Study form, therefore $X$ is symplectic with symplectic form $\omega$. Assuming that $K$ acts unitarily, there is a moment map for this action
$$
	\Phi: X \to \mathfrak k^*,
$$  
which satisfies:
\begin{enumerate}
    \item $\Phi$ is $K$-equivariant with respect to the action of
    $K$ on $X$ and to the coaction of $K$ on $\mathfrak k^*$; that is, the
    following holds
    \begin{eqnarray*}\Phi(g\cdot p)=g\Phi(p)g^{-1},\quad p\in M,\ g\in K;\end{eqnarray*}
    \item
    $\Phi$ lifts the infinitesimal action, in the sense that, for all $A\in \mathfrak k^{*}$ we have 
    $$
    	d\Phi^A=\omega(A_X,--)
    $$
    where $\Phi^A: X \to \mathbb R$ is the map given by $x\mapsto \Phi(x)\cdot A$, and  the
    infinitesimal action $\mathfrak k \to Vect(X)$ is given by $A\to A_X$ with
    $$
    		A_{X,x}=\dfrac{d}{dt}\exp(tA)\cdot x \big |_{t=0}.
    $$ 
\end{enumerate}

\begin{thm}[Kempf-Ness theorem, \cite{KempfNess}]
There is an inclusion $\Phi^{-1}(0)\subset X^{ss}(L)$ which induces a homeomorphism between the symplectic reduction and the GIT quotient
$$
	\Phi^{-1}(0)/K\cong X^{ss}(L)//G.
$$
\end{thm}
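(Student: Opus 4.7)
The statement to be proved is the classical theorem of Kempf and Ness, and I would follow the well-established strategy built around a distinguished convex function on the group $G$. Choose a $K$-invariant Hermitian inner product on $\Cn^{n+1}$ (so that the Fubini--Study form on $\mathbb P^n$ becomes the associated K\"ahler form and the $K$-action is unitary). For a point $x\in X$ with lift $\tilde x\in \Cn^{n+1}\setminus\{0\}$, define the Kempf--Ness function
\[
p_{\tilde x}: G\longrightarrow \Rn, \qquad p_{\tilde x}(g)=\log\|g\cdot\tilde x\|^{2}.
\]
The first step is to verify that $p_{\tilde x}$ descends to a function on $K\backslash G$ and is \emph{geodesically convex} there with respect to the natural Riemannian structure. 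Convexity is the technical heart of the argument, and I expect it to be the main obstacle: one differentiates twice along a one-parameter subgroup $\exp(itA)$ for $A\in\mathfrak{k}$, expresses the Hessian in terms of $\langle A\cdot v, A\cdot v\rangle/\|v\|^{2}-(\textrm{first derivative})^{2}$, and uses the Cauchy--Schwarz inequality. Strict convexity in directions transverse to the stabilizer is what forces uniqueness of critical points.

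Once convexity is established, the moment map condition $\Phi(x)=0$ will correspond exactly to the identity $e$ being a critical point of $p_{\tilde x}$. This is a direct computation: by the defining property (2) of $\Phi$, one has
\[
\frac{d}{dt}\bigg|_{t=0}p_{\tilde x}(\exp(tA))=2\,\Phi(x)\cdot A,\qquad A\in\mathfrak{k},
\]
so $e$ is critical iff $\Phi(x)\perp \mathfrak{k}$, i.e.\ $\Phi(x)=0$. Combined with geodesic convexity, a critical point is automatically a global minimum, and any two minima lie in the same $K$-orbit.

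The next step is to relate semistability to the behavior of $p_{\tilde x}$ at infinity. By the Hilbert--Mumford criterion \eqref{HMcriterion} applied to one-parameter subgroups of $G$, the point $x$ is semistable if and only if $0\notin\overline{G\cdot\tilde x}$, equivalently if and only if $p_{\tilde x}$ is bounded below on $G$. Using properness arguments on the quotient $K\backslash G$ (together with the convexity from the first step), boundedness of $p_{\tilde x}$ below is equivalent to $p_{\tilde x}$ attaining its infimum. Therefore, each semistable $G$-orbit meets $\Phi^{-1}(0)$, and by the uniqueness statement above the intersection is a single $K$-orbit. This immediately gives the inclusion $\Phi^{-1}(0)\subset X^{ss}(L)$ and a set-theoretic bijection
\[
\Phi^{-1}(0)/K\;\longleftrightarrow\;X^{ss}(L)//G,
\]
where on the right we identify two semistable orbits whose closures meet, since any such pair of orbits share a common closed orbit in $X^{ss}(L)$ which, by the preceding discussion, carries a (unique up to $K$) zero of $\Phi$.

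Finally, I would upgrade the bijection to a homeomorphism. The map $\Phi^{-1}(0)\hookrightarrow X^{ss}(L)$ is continuous and $K$-equivariant, inducing a continuous map $\Phi^{-1}(0)/K\to X^{ss}(L)//G$. Surjectivity is the content of the previous paragraph; injectivity follows because a $G$-orbit closure in $X^{ss}(L)$ contains a unique closed $G$-orbit, and that orbit contains a unique $K$-orbit of zeros of $\Phi$. For the inverse to be continuous one uses that $\Phi^{-1}(0)/K$ is compact (as $X$ is projective and $\Phi^{-1}(0)$ is closed) and that $X^{ss}(L)//G$ is Hausdorff, so a continuous bijection from a compact space to a Hausdorff space is a homeomorphism. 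The hardest technical point, as indicated, remains the geodesic convexity of the Kempf--Ness function; everything else is formal consequences of it together with the Hilbert--Mumford criterion already recalled in Section \ref{secGeometric}.
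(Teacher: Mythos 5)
The paper does not prove this statement at all; it is quoted verbatim from Kempf--Ness \cite{KempfNess} as background, so your proposal can only be measured against the standard argument in the literature, which is indeed the route you follow (the Kempf--Ness function $p_{\tilde x}(g)=\log\|g\cdot\tilde x\|^2$, its geodesic convexity on $K\backslash G$, and the identification of its critical points with zeros of the moment map).

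There is, however, a genuine error in the middle of your argument. You assert that ``boundedness of $p_{\tilde x}$ below is equivalent to $p_{\tilde x}$ attaining its infimum'' and conclude that \emph{each} semistable $G$-orbit meets $\Phi^{-1}(0)$. Both claims are false: a convex function on the complete nonpositively curved space $K\backslash G$ can perfectly well be bounded below without attaining its infimum (think of $e^{-t}$ on $\Rn$), and this is exactly what happens for a semistable point whose orbit is not closed in $X^{ss}(L)$. The correct trichotomy is: $x$ is semistable iff $p_{\tilde x}$ is bounded below; $p_{\tilde x}$ attains its minimum iff the orbit $G\cdot\tilde x$ of the lift is closed in $\Cn^{n+1}$, i.e.\ iff $x$ is polystable; and $G\cdot x$ meets $\Phi^{-1}(0)$ precisely in that case. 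Your proof therefore conflates semistability with polystability. The conclusion survives because the points of $X^{ss}(L)//G$ are in bijection with the \emph{closed} orbits in $X^{ss}(L)$: surjectivity of $\Phi^{-1}(0)/K\to X^{ss}(L)//G$ should be argued by passing from an arbitrary semistable orbit to the unique closed orbit in its closure and applying the attainment statement to \emph{that} orbit (where closedness of $G\cdot\tilde x$ plus properness of the norm on bounded sets gives the minimum). A second, minor slip: the first-variation formula should be taken along $\exp(itA)$ for $A\in\mathfrak k$ (the directions transverse to $K$); along $\exp(tA)$ with $A\in\mathfrak k$ the function is constant since $K$ acts unitarily. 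With these corrections the remainder of your argument (uniqueness of the minimizing $K$-orbit from strict convexity, and the compact-to-Hausdorff continuous bijection giving the homeomorphism) is sound.
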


\subsection{$\chv$-unitarizable representations via moment map}
Let $\thv=(\theta_s)_{s\in \Po}\in \mathbb Z^{\Po}$ be a weight with positive components,
$\alv=(\alpha_0;\alpha_s)_{s\in \Po}\in \mathbb Z^{\Poe}$  an admissible dimension, and 
$\V=(V_0;V_s)\in\cats{\Po}$  a representation with $\dimv\V=\alv$. We regard $\V$ as a point in $\R{\alv}{\Po}(L_{\thv})$ after the embedding of $\R{\alv}{\Po}$ into the projective space as in Section \ref{secLinearization}. It is easy to check that the moment map of
$\SL(\alpha_0)$-action on $\R{\alv}{\Po}(L)$  has a form
\begin{equation*}
\begin{split}
	\Phi: \R{\alv}{\Po}(L)&\to \mathfrak {su}(\alpha_0)^{*},\\
		  \V=(V_0;V_s)_{s\in \Po}&\mapsto \sum_{s\in \Po} \theta_s A_s A_s^{*}-\mu_\thv(\V) I,
\end{split}	
\end{equation*}
 where $A_i$ is an isometry which embeds $V_i$ into $\mathbb C^{\alpha_0}$. 
Considering $\Phi^{-1}(0)$ we get
\begin{eqnarray*}
  \Phi^{-1}(0)= \left \{ (P_s)_{s \in \mathcal P}\in (M_{\alpha_0}(\mathbb C))_{s \in \Po}\,  \scalebox{1.5}{\Big |}\!
    \begin{array}{c}
        P_s=P_s^*=P_s^2,\ \textrm{rank}(P_s)=\alpha_s, \\
        P_sP_t=P_sP_t=P_s, \quad s\prec t, \\
        \sum_{s\in \Po}\theta_s P_s=\mu_\thv(\alv) I   
     \end{array}  \right\},
\end{eqnarray*}
therefore $\Phi^{-1}(0)$ is a set of objects $\U$ in $\ucats{\Po}{\thv}$ with dimension $\alv$. 
If $\U$ is $\chv$-representation then $\Fu(\U)$ is $\mu_\chv$-polystable by Proposition \ref{functorF}. Therefore, the functor $\Fu(\cdot)$ yields a natural map $\phi:\Phi^{-1}(0)\rightarrow  \R{\alv}{\Po}^{\thv-ss}$. As a consequence of the Kempf-Ness theorem we have

\begin{thm}
Let $\Po$ be a poset,  $\alv=(\alpha_0;\alpha_s)_{s \in \Po}$  an admissible dimension vector and $\thv=(\theta_0;\theta_s)_{s\in \Po}$  a form such that $\thv(\alv)=0$. The map $\phi:\Phi^{-1}(0)\rightarrow  \R{\alv}{\Po}^{\thv-ss}$ induces a bijection:
$$
	\Phi^{-1}(0)/U(\alpha_0) \simeq \M{\Po}{\alv}^{\thv-ss}.
$$
\end{thm}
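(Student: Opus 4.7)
The strategy is a direct application of the Kempf--Ness theorem recalled above to the action of $G=\SL(\alpha_0)$ on the projective variety $\R{\alv}{\Po}$, equipped with the ample linearization $L_{\thv}$ constructed in Section \ref{secLinearization}. Its maximal compact subgroup is $K=\mathrm{SU}(\alpha_0)$, and $\mathfrak{su}(\alpha_0)$ is self-dual via the trace pairing $\langle A,B\rangle=\mathrm{tr}(AB^{*})$. Once the formula for the moment map is verified and $\Phi^{-1}(0)$ is identified with the $\thv$-representations, Kempf--Ness yields
$$
  \Phi^{-1}(0)/\mathrm{SU}(\alpha_0)\;\cong\;\R{\alv}{\Po}^{\thv-ss}/\!/\SL(\alpha_0)\;=\;\M{\Po}{\alv}^{\thv-ss},
$$
and the passage from $\mathrm{SU}(\alpha_0)$ to $\mathrm U(\alpha_0)$ is essentially free of cost.

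First I would verify the stated moment map formula. On a single Grassmannian $\Gr(\alpha_s,\alpha_0)\hookrightarrow\mathbb P(\wedge^{\alpha_s}\Cn^{\alpha_0})$, the $\mathrm U(\alpha_0)$--moment map with respect to the Fubini--Study form is well known to be $V_s\mapsto P_{V_s}-\tfrac{\alpha_s}{\alpha_0}I$, where $P_{V_s}=A_sA_s^{*}$ is orthogonal projection. Since moment maps are additive under Segre products of linearizations and rescale by the tensor power under Veronese, the moment map on $\prod_{s}\mathbb P(\mathrm{Sym}^{\theta_s}\wedge^{\alpha_s}\Cn^{\alpha_0})$ restricted to $\R{\alv}{\Po}$ becomes
$$
  \Phi(\V)\;=\;\sum_{s\in\Po}\theta_s A_sA_s^{*}\;-\;\mu_\thv(\V)\,I,
$$
where the scalar arises from projecting to the traceless part $\mathfrak{su}(\alpha_0)^{*}$ (the condition $\thv(\alv)=0$ precisely ensures that this right-hand side has vanishing trace).

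Next I would unwind $\Phi(\V)=0$: since $A_sA_s^{*}=P_{V_s}$, the equation becomes $\sum_{s}\theta_s P_{V_s}=\mu_\thv(\alv)I$, which is condition \eqref{chirelation} with $\chv=\thv$. Hence $\Phi^{-1}(0)$ is exactly the set of unitary $\thv$--representations of $\Po$ of dimension $\alv$, and Proposition \ref{functorF} shows that the forgetful map $\Fu$ lands in $\R{\alv}{\Po}^{\thv-ss}$ and coincides with $\phi$. Because $\phi$ is $\mathrm U(\alpha_0)$--equivariant, it descends to $\Phi^{-1}(0)/\mathrm U(\alpha_0)$. The final point is the replacement of $\mathrm{SU}(\alpha_0)$ by $\mathrm U(\alpha_0)$: scalar matrices $e^{i\tau}I$ preserve every subspace of $V_0$, so they act trivially on $\R{\alv}{\Po}$, and consequently on $\Phi^{-1}(0)$. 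Therefore $\mathrm U(\alpha_0)$--orbits and $\mathrm{SU}(\alpha_0)$--orbits coincide, giving the claimed bijection.

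The main obstacle will be the moment map calculation, specifically pinning down the correct normalization so that the constant subtracted is exactly $\mu_\thv(\V)I$ under the tautological linearization induced by the Pl\"ucker--Veronese--Segre embedding (rather than just up to a universal rescaling). A secondary technical point is that Kempf--Ness is usually stated for smooth $G$--varieties, whereas $\R{\alv}{\Po}$ may be singular; however, since $\R{\alv}{\Po}$ is a closed $G$--invariant subvariety of the smooth ambient product of Grassmannians, one applies the standard extension of Kempf--Ness in which the moment map restricts to the subvariety and the theorem continues to hold, which suffices here.
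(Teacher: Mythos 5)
Your proposal is correct and follows essentially the same route as the paper, which likewise deduces the statement directly from the Kempf--Ness theorem after asserting the moment map formula and identifying $\Phi^{-1}(0)$ with the unitary $\thv$-representations via Proposition \ref{functorF}. You in fact supply details the paper leaves implicit (the additivity/rescaling of moment maps under the Segre and Veronese embeddings, the passage from $\mathrm{SU}(\alpha_0)$ to $\mathrm{U}(\alpha_0)$, and the singularity caveat); the only small inaccuracy is that the tracelessness of $\sum_s\theta_s P_{V_s}-\mu_\thv(\V)I$ holds by the definition of $\mu_\thv$ alone, while the hypothesis $\thv(\alv)=0$ instead pins down $\theta_0=-\mu_\thv(\alv)$ so that the moment-map condition matches the weight in \eqref{chirelation}.
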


We immediately have

\begin{cor} \label{unitCorr}
A representation $\V=(V_0;V_s)_{s\in \Po}$ is $\chv=(\chi_s)_{s\in \Po}$-unitarizable if and only if $\V$ is $\mu_\chv$-polystable.
\end{cor}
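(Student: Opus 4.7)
The plan is to deduce the corollary directly from the preceding theorem together with Proposition~\ref{functorF} and Proposition~\ref{polystable-closed}. The two implications are essentially ``translation'' statements between the symplectic picture ($\chv$-representations) and the algebro-geometric picture (polystable orbits), once the bijection $\Phi^{-1}(0)/U(\alpha_0) \simeq \M{\Po}{\alv}^{\chv-ss}$ is in hand.

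For the forward direction, suppose $\V=(V_0;V_s)_{s\in\Po}$ is $\chv$-unitarizable. By definition there is an inner product on $V_0$ for which $\V$, viewed as a system of subspaces of the resulting unitary space, belongs to $\ucats{\Po}{\chv}$. Choosing an isometry $V_0 \simeq \Cn^{\alpha_0}$ presents this unitary representation as a point of $\Phi^{-1}(0)$, where $\alv=\dimv\V$. Applying Proposition~\ref{functorF} (with $\thv=\chv$) to this point we conclude that $\Fu(\V)$, which coincides with $\V$ as an object of $\cats{\Po}$, is $\mu_\chv$-polystable.

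For the converse, suppose $\V$ is $\mu_\chv$-polystable of dimension $\alv$. By Proposition~\ref{polystable-closed} the $\SL(\alpha_0)$-orbit of $\V$ in $\R{\alv}{\Po}^{\chv-ss}$ is closed and therefore gives a well-defined point $[\V] \in \M{\Po}{\alv}^{\chv-ss}$. The theorem above supplies a bijection $\Phi^{-1}(0)/U(\alpha_0)\simeq\M{\Po}{\alv}^{\chv-ss}$, hence there exists a point $\U \in \Phi^{-1}(0)$ whose class in the symplectic quotient maps to $[\V]$. Unwinding definitions, $\U$ is a $\chv$-representation and $\Fu(\U)$ lies in the same closed $\SL(\alpha_0)$-orbit as $\V$; in particular $\Fu(\U)\cong\V$ in $\cats{\Po}$. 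Transporting the inner product from $\U$ to $V_0$ along this isomorphism equips $V_0$ with an inner product for which $\V$ becomes a $\chv$-representation, so $\V$ is $\chv$-unitarizable.

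The only step that requires a little care is verifying that an isomorphism of underlying subspace representations $\Fu(\U)\cong\V$ can be used to pull back the inner product so that relation~\eqref{chirelation} is preserved. This is immediate: the relation $\sum_{s\in\Po}\chi_sP_{U_s}=\chi_0 I$ is expressed intrinsically in terms of orthogonal projections onto the subspaces $U_s$, so any linear bijection $V_0\to U_0$ that carries $V_s$ to $U_s$ pulls back the Hermitian form on $U_0$ to a Hermitian form on $V_0$ under which the pulled-back projections satisfy the same identity. No obstruction beyond this routine transport of structure arises, and the corollary follows.
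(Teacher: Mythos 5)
Your proof is correct and follows exactly the route the paper intends: the paper derives this corollary "immediately" from the Kempf--Ness bijection $\Phi^{-1}(0)/U(\alpha_0)\simeq \M{\Po}{\alv}^{\thv-ss}$ together with Proposition~\ref{functorF} and Proposition~\ref{polystable-closed}, and your two directions (apply Proposition~\ref{functorF} for unitarizable $\Rightarrow$ polystable; use the closedness of the orbit and the fact that every closed orbit in the semistable locus meets $\Phi^{-1}(0)$, then transport the inner product, for the converse) are precisely the details being left implicit. The remark on pulling back the Hermitian form so that relation~\eqref{chirelation} is preserved is the right point to flag and is handled correctly.
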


Applying Theorem \ref{TheoremSection3} we have

\begin{cor}
 Any indecomposable representation  $\V$ of a poset of finite type is $\chv$-unitarizable, where $\chv$ is constructed by formulas \eqref{formulaCanStab} with respect to the dimension of $\V$.
\end{cor}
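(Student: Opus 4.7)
The plan is to deduce the corollary by combining Corollary~\ref{unitCorr}, which identifies $\chv$-unitarizable representations with $\mu_\chv$-polystable ones, with the constructive part of the proof of Theorem~\ref{TheoremSection3}, which exhibits an explicit positive stabilizing weight for indecomposable representations of posets of finite type. First I would note that any $\mu_\chv$-stable representation is automatically $\mu_\chv$-polystable (as the trivial one-summand decomposition), so by Corollary~\ref{unitCorr} it suffices to exhibit $\V$ as $\mu_\chv$-stable with respect to the weight $\chv$ specified in the statement.

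Next I would recall the construction from the proof of implication $(a)\Rightarrow(c)$ in Section~\ref{secFinite}. Given an indecomposable $\V\in\cats{\Po}$ with $\Po$ of finite type, one locates a sincere subposet $\widetilde{\mathcal I}\subseteq\Poe$ on which $\V$ restricts to a sincere representation $\V_{\widetilde{\mathcal I}}$. Proposition~\ref{propPosit}, together with the subsequent case-by-case check that the components of \eqref{formulaCanStab} are all positive, shows that $\V_{\widetilde{\mathcal I}}$ is positively stable with respect to the weight $\chv_{\widetilde{\mathcal I}}$ produced by applying \eqref{formulaCanStab} to $\dimv\V_{\widetilde{\mathcal I}}$. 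Proposition~\ref{stLift} then promotes this to positive stability of $\V$ on the full poset $\Poe$.

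The verification that the lifted weight can be taken to coincide with the $\chv$ of the statement—namely the weight obtained by feeding $\dimv\V$ directly into \eqref{formulaCanStab}—is the only point requiring care. Outside $\widetilde{\mathcal I}$ the dimensions $\alpha_s$ of $\V$ vanish by the choice of $\widetilde{\mathcal I}$, so the corresponding summands in \eqref{formulaCanStab} drop out, and the surviving contributions coming from arrows and commutativity relations that reach into $\widetilde{\mathcal I}$ reproduce exactly the canonical weight $\chv_{\widetilde{\mathcal I}}$. Thus the weight produced by \eqref{formulaCanStab} applied to $\dimv\V$ is a legitimate lifting in the sense of Proposition~\ref{stLift}, and it remains positive on the support of $\V$. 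Combining these facts, $\V$ is $\chv$-stable, hence $\mu_\chv$-polystable, hence $\chv$-unitarizable by Corollary~\ref{unitCorr}.

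The main obstacle is the reconciliation just described: Proposition~\ref{stLift} merely guarantees the existence of \emph{some} positive lifting (obtained by an ad hoc rescaling), whereas the statement asks specifically for the weight given by \eqref{formulaCanStab}. Once one observes that the vanishing of the outside components of $\dimv\V$ makes the canonical formula automatically compatible with the lifting procedure, the argument closes.
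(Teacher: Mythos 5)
Your overall route --- stable implies polystable, then Corollary \ref{unitCorr} --- is exactly the paper's, which deduces the corollary by simply citing Theorem \ref{TheoremSection3} together with Corollary \ref{unitCorr}. The difficulty lies in the step you yourself single out as ``the only point requiring care,'' and your resolution of it does not hold. You claim that the components of $\dimv\V$ vanish outside the sincere subposet $\widetilde{\mathcal I}$, so that \eqref{formulaCanStab} applied to $\dimv\V$ collapses to the canonical weight of $\V_{\widetilde{\mathcal I}}$. This is false: in Section \ref{secFinite} a representation fails to be sincere not only when some $\dim V_s=0$ but also when $V_s=\sum_{t\prec s}V_t$ (in particular when $V_s=V_t$ for comparable elements), and the elements deleted to reach $\widetilde{\mathcal I}$ can perfectly well carry $\alpha_s>0$. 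For instance, the indecomposable $(\F;\F,\F,\F)$ over the three-element antichain is sincere only after deleting all three points, yet every component of its dimension vector equals $1$. Consequently the ``surviving contributions'' computation does not go through, and \eqref{formulaCanStab} evaluated at $\dimv\V$ need not coincide with the canonical weight of the sincere restriction.

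Moreover, Proposition \ref{stLift} does not produce the weight of the statement: its lifting first rescales $\thv$ by $\dim V_{\tilde s}\dim V_0+1$ and then sets $\widetilde\theta_{\tilde s}=\dim V_0$, so the resulting weight is a genuinely different vector from the one obtained by feeding $\dimv\V$ into \eqref{formulaCanStab}. Nothing in the propositions you cite shows that this specific weight stabilizes a non-sincere indecomposable; Proposition \ref{propPosit} covers only the sincere case. To close the argument as stated you would need either a direct verification that \eqref{formulaCanStab} applied to $\dimv\V$ is a stabilizing weight for every indecomposable $\V$ of a poset of finite type, or you must weaken the conclusion to ``$\chv$-unitarizable for some positive weight $\chv$,'' which is all that Theorem \ref{TheoremSection3} actually delivers. (The paper glosses over this same point, so your instinct to examine it was sound; the fix you propose, however, rests on a false premise.)
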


\subsection{Relation between the categories $\ucats{\Po}{\chv}$ and $\cats{\Po}$}

Recall that $\core{\cats{\Po}}$ of $\cats{\Po}$ is a maximal subgroupoid of $\cats{\Po}$: the subcategory consisting of the same objects as in $\cats{\Po}$, in which morphisms are the isomorphisms in $\cats{\Po}$. Note that $\cats{\Po}$ and $\core{\cats{\Po}}$ are the ``same'' from the classification point of view.  

As we mentioned above there is a forgetful functor $\Fu:\ucats{\Po}{\chv}\to \cats{\Po}$.   It follows from Corollary \ref{unitCorr} that the image of $\Fu$ (on objects) coincides with the objects of $\core{\cats{\Po}^{\chv-ps}}$.  Now we construct the functor in opposite direction. 
Let $\V=(V_0;V_s)\in \cats{\Po}^{\chv-ps}$.  There is unique inner product in $\V_0$ which 
makes $\V$ into a $\chv$-representation. Denote the resulting $\chv$-representation by $\Ft(\V)\in \ucats{\Po}{\chv}$. Given an invertible morphism $g:\V \to \W$ define $\Ft(g)=\varphi$, where $g=\varphi A$ is a right polar decomposition of $g$ ($\varphi$ is a unitary map and $A$ is positive definite). As $g$ is invertible, the right polar decomposition is unique and hence $\Ft(g)$ is well-defined. Also, one checks that it is a morphism between $\Ft(\V)$ and $\Ft(\W)$ (see \cite[Theorem 3]{SamYus}). One can easily see that $\Ft$ preserves the composition of morphisms and therefore yields a functor. 

Consider the following relation on morphisms in $\core{\cats{\Po}}$. Given two morphisms $g_1, g_2:\V \to \W$ we say that $g_1 \sim g_2$ if $\varphi_1=\varphi_2$ in right polar decompositions $g_1=\varphi_1 A_1$ and $g_2=\varphi_2 A_2$ with respect to some inner product in $V_0$ and $W_0$. One can show (the proof is left to the reader) that the relation $\sim$ does not depend on the choice of inner product and in fact is an equivalence relation on morphisms in  $\core{\cats{\Po}}$. By  $\core{\cats{\Po}}/\sim$ we denote the corresponding quotient category and by $\Pi:\core{\cats{\Po}} \to \core{\cats{\Po}}/\sim$ the quotient functor. 
By construction if follows that $\Ft$ factors as $\Ft'\circ \Pi$ (as the unitary parts in polar decomposition of morphisms $g_1\sim g_2$ are the same).

\begin{prop}
Functors $\Pi \circ \Fu$ and $\Ft'$ establish an isomorphism between the categories 
$\ucats{\Po}{\chv}$ and $\core{\cats{\Po}^{\chv-ps}}/\sim$.
\end{prop}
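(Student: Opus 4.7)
The plan is to verify that the two composite functors $\Ft' \circ (\Pi \circ \Fu)$ and $(\Pi \circ \Fu) \circ \Ft'$ coincide with the identity functors on $\ucats{\Po}{\chv}$ and on $\core{\cats{\Po}^{\chv-ps}}/\sim$ respectively. Most of the well-definedness is already in place: Proposition \ref{functorF} guarantees that $\Fu$ lands inside $\core{\cats{\Po}^{\chv-ps}}$, and the preceding construction ensures $\Ft$ factors as $\Ft' \circ \Pi$. What is left is to trace both compositions on objects and morphisms.

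First I would check $\Ft' \circ (\Pi \circ \Fu) = \mathrm{id}_{\ucats{\Po}{\chv}}$. On objects, given $\U \in \ucats{\Po}{\chv}$, $\Fu$ discards the inner product to yield the underlying polystable representation, and $\Ft'$ reinstates an inner product producing a $\chv$-representation; since the original inner product on $\U$ is itself a valid choice for $\Ft$, one recovers $\U$ exactly. On morphisms, a unitary map $\varphi$ has trivial right polar decomposition $\varphi = \varphi \cdot I$, so $\Ft'(\Pi(\Fu(\varphi))) = \varphi$.

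Next, for $(\Pi \circ \Fu) \circ \Ft' = \mathrm{id}_{\core{\cats{\Po}^{\chv-ps}}/\sim}$: given a polystable $\V$, $\Ft'$ produces $\Ft(\V)$ equipped with an inner product, which $\Fu$ strips away, returning $\V$. For a morphism $[g]\colon \V \to \W$ in the quotient, write $g = \varphi A$ in right polar decomposition with respect to the inner products carried by $\Ft(\V)$ and $\Ft(\W)$; then $\Ft'([g]) = \varphi$ and $\Pi \circ \Fu$ sends this to $[\varphi]$. The morphisms $g$ and $\varphi$ manifestly have the same unitary factor in their polar decompositions, so $[g] = [\varphi]$ in the quotient.

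The main obstacle is the ambiguity in the $\chv$-unitary inner product on a polystable representation: inside each isotypic component the inner product is pinned down only up to a positive Hermitian transformation of the multiplicity space, and consequently the polar decomposition of a morphism depends on these choices. This is precisely why the relation $\sim$ is introduced; one must carefully verify that $\sim$ is independent of the chosen inner product (stated without proof in the excerpt) and that $\Ft$ consequently descends to a genuine functor $\Ft'$ on $\core{\cats{\Po}}/\sim$. Once this invariance is secured, both compositions above collapse to the identity and the claimed isomorphism of categories follows.
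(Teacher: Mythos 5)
Your proof is correct and follows the route the paper itself intends: the proposition is stated there without a written proof, as an immediate consequence of the constructions of $\Fu$, $\Pi$ and $\Ft'$, and your direct verification that both composites are the identity (using that a unitary morphism has trivial positive part, and that $g$ and its unitary polar factor $\varphi$ share the same unitary part so that $g\sim\varphi$) is exactly that argument. Your closing remark also correctly isolates the only genuine delicacy, which the authors likewise leave to the reader: the inner product realizing the $\chv$-condition on a polystable object is determined only up to a positive automorphism of each multiplicity space, so the independence of $\sim$ from this choice is precisely what makes $\Ft'$ well defined and lets the two composites collapse to the identity.
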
 

Summing up the constructions above we have the following
\begin{equation*}
\begin{tikzcd}[column sep = large, row sep = large]
	\ucats{\Po}{\chv} \ar[r,bend left,"\Fu"] & \core{\cats{\Po}^{\chv-ps}}\ar[l,bend left,"\Ft"]\ar[d,"\Pi"]\ar[r,hook] & \cats{\Po} \\ 
	 & \core{\cats{\Po}^{\chv-ps}} \ar[lu,bend left,"\Ft'"]/\sim 
\end{tikzcd}
\end{equation*}

\

In particular one shows that $\ucats{\Po}{\chv}$ has finitely many indecomposable objects for any $\chv$ iff $\cats{\Po}$ has finitely many indecomposable representations (up to an isomorphism), which reproves \cite[Theorem 1]{SamYus}.

\appendix \section{Some additional statements} \label{appA}

\begin{prop} \label{quotientincats}
Let $\V=(V_0;V_s)_{s\in \Po}$ and $K\subset V_0$ then for the induced representation $\V_K=(K;V_s\cap K)_{s\in \Po}$ we have  $\V/\V_K \in \cats{\Po}$.
\end{prop}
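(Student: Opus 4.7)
The plan is to write down the quotient explicitly and verify the two defining properties of a subspace representation: that the pieces are subspaces of a single finite-dimensional ambient space, and that the partial-order inclusions are preserved.

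First I would define the candidate quotient as
\[
\V/\V_K = \bigl(V_0/K\,;\,(V_s + K)/K\bigr)_{s\in\Po}.
\]
The ambient space $V_0/K$ is finite-dimensional (being a quotient of $V_0$), and each $(V_s + K)/K$ is by construction a subspace of $V_0/K$, namely the image of $V_s$ under the canonical projection $\pi:V_0\twoheadrightarrow V_0/K$. By the second isomorphism theorem we may identify $(V_s+K)/K \cong V_s/(V_s\cap K)$, which also confirms that this is precisely the vector-space quotient of the $s$-th component of $\V$ by the corresponding component of $\V_K$.

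Next I would verify compatibility with the partial order: if $s\prec t$ in $\Po$, then $V_s\subseteq V_t$ in $\V$, hence $V_s+K\subseteq V_t+K$, and consequently $(V_s+K)/K\subseteq (V_t+K)/K$ in $V_0/K$. This is the only condition required for $\V/\V_K$ to be an object of $\cats{\Po}$, so the conclusion follows.

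There is no real obstacle here; the only subtle point is to make sure the quotient is taken the ``right'' way, namely using $(V_s+K)/K$ rather than naively writing $V_s/(V_s\cap K)$ without embedding it into $V_0/K$. Since $\V_K$ was defined precisely as the proper subrepresentation associated to $K$, the components $V_s\cap K$ are exactly the kernels of the restrictions $\pi|_{V_s}:V_s\to V_0/K$, so the images $(V_s+K)/K$ sit coherently inside $V_0/K$ and inherit the required inclusions from those in $\V$.
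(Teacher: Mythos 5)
Your proof is correct and follows essentially the same route as the paper: both verify that for $s\prec t$ the quotient of the $s$-component embeds in the quotient of the $t$-component. If anything, your version is slightly cleaner, since realizing every piece concretely as $(V_s+K)/K$ inside the single ambient space $V_0/K$ makes the inclusions immediate, whereas the paper checks injectivity of the induced maps $V_s/(V_s\cap K)\to V_t/(V_t\cap K)$ between abstract quotients (the two descriptions agree by the second isomorphism theorem).
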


\begin{proof}
	Let $s,t\in \Po$ and $s\prec t$. It is enough to show that $V_s/(V_s\cap K)\subset V_t/(V_t\cap K)$. Define the following map 
	$$
		\rho:V_s/(V_s\cap K)\rightarrow V_t/(V_t\cap K),
	$$
by $\rho(x+V_s\cap K)=x+V_t\cap K$, for any $x\in V_s$. Clearly, $\rho$ is linear. If $x_1+V_s\cap K=x_2+V_s\cap K$ then $x_1-x_2 \in V_s\cap K$ and $x_1-x_2 \in V_t\cap K$, therefore $\rho(x_1+V_s\cap K)=\rho(x_2+V_s\cap K)$ and $\rho$ is well-defined. 
We have  $\ker{\rho}=0$, therefore $\rho$ is an inclusion.
\end{proof}

\begin{prop} \label{posvar}
	Let $\alv=(\alpha_0;\alpha_s)_{s\in \Po}$ be a dimension vector. Variety $\R{\alv}{\Po}$ is a Zariski closed, irreducible subset of $\prod_{s\in \Po}\Gr(\alpha_s,\alpha_0)$. 
\end{prop}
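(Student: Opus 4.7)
The claim splits naturally in two: showing $\R{\alv}{\Po}$ is Zariski closed in $\prod_{s\in\Po}\Gr(\alpha_s,\alpha_0)$, and showing it is irreducible.

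\emph{Closedness.} For each cover relation $s\to t$ in the Hasse diagram of $\Po$, the incidence locus
\[
I_{s,t}=\bigl\{(V,W)\in \Gr(\alpha_s,\alpha_0)\times \Gr(\alpha_t,\alpha_0)\ \big|\ V\subset W\bigr\}
\]
is classically Zariski closed: pulling back the universal subbundle from $\Gr(\alpha_s,\alpha_0)$ and the universal quotient from $\Gr(\alpha_t,\alpha_0)$, the condition $V\subset W$ cuts out $I_{s,t}$ as the zero locus of the induced bundle map of the former into the latter (equivalently, it is defined by the vanishing of suitable Pl\"ucker coordinates of $V\wedge W$, or as a determinantal rank condition on the concatenated basis matrix). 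By transitivity of $\preceq$ it suffices to impose the cover conditions, so $\R{\alv}{\Po}$ is the intersection of finitely many pullbacks of the $I_{s,t}$, hence closed.

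\emph{Irreducibility, by induction on $|\Po|$.} The base case $|\Po|=1$ is a single Grassmannian. For the inductive step, fix an enumeration $s_1,\dots,s_n$ of $\Po$ compatible with the order (so $s_i\prec s_j\Rightarrow i<j$); set $\Po_k=\{s_1,\dots,s_k\}$ and $R_k=\R{\alv|_{\Po_k}}{\Po_k}$. Consider the ``forget $V_{s_k}$'' morphism $\pi_k\colon R_k\to R_{k-1}$. Its fiber over $(V_{s_j})_{j<k}$ is
\[
\bigl\{V\in \Gr(\alpha_{s_k},\alpha_0)\ \big|\ W_k\subset V\bigr\}\cong \Gr\bigl(\alpha_{s_k}-\dim W_k,\ \alpha_0-\dim W_k\bigr),\qquad W_k:=\sum_{\substack{j<k\\ s_j\prec s_k}}V_{s_j}.
\]
By induction $R_{k-1}$ is irreducible. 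The function $(V_\bullet)\mapsto\dim W_k$ is lower semi-continuous, so it attains its generic (maximum) value $d_k$ on a dense open $U_k\subset R_{k-1}$. Over $U_k$ the $W_k$ fit into a rank-$d_k$ subbundle of the trivial bundle $U_k\times V_0$, and $\pi_k^{-1}(U_k)$ is the Grassmann bundle of $(\alpha_{s_k}-d_k)$-planes in the rank-$(\alpha_0-d_k)$ quotient bundle; admissibility of $\alv$ ensures $d_k\leq \alpha_{s_k}$ (so the fiber is non-empty), and as a Grassmann bundle over an irreducible base, $\pi_k^{-1}(U_k)$ is irreducible.

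\emph{Main obstacle: density.} The delicate step is to show $\pi_k^{-1}(U_k)$ is \emph{dense} in $R_k$, so that $R_k=\overline{\pi_k^{-1}(U_k)}$ inherits irreducibility. Given a point $(V_{s_1},\dots,V_{s_{k-1}},V_{s_k})\in R_k$ with $\dim W_k<d_k$, I would produce a one-parameter deformation of the components $V_{s_j}$ with $s_j\prec s_k$ inside the fixed space $V_{s_k}$, preserving all inclusion relations among them and with the remaining $V_{s_i}$ ($s_i\not\prec s_k$) held fixed, such that $\dim W_k(t)=d_k$ for $t\neq 0$. The admissible deformation locus for the ``subposet below $s_k$'' is an $\mathbf R$-type variety of strictly smaller poset, irreducible by the inductive hypothesis, on which $\dim W_k$ equals $d_k$ on a dense open subset; picking a generic curve therein realizes the degenerate point as a limit. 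Admissibility of $\alv$ enters precisely to guarantee that such a deformation stays inside $V_{s_k}$. Iterating up to $k=n$ then yields irreducibility of $R_n=\R{\alv}{\Po}$.
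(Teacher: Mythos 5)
Your closedness argument is fine and is essentially the paper's: the paper realizes $\R{\alv}{\Po}$ as the intersection, over all pairs $s_1,s_2\in\Po$, of preimages of the (closed) two-element incidence varieties under the projections to $\Gr(\alpha_{s_1},\alpha_0)\times\Gr(\alpha_{s_2},\alpha_0)$, whereas you impose only the cover conditions; the difference is immaterial. For irreducibility your inductive scheme --- adjoin the elements along a linear extension and analyse the forgetful map $\pi_k\colon R_k\to R_{k-1}$ whose fibre over $(V_{s_j})_{j<k}$ is the Grassmannian of $\alpha_{s_k}$-planes containing $W_k=\sum_{s_j\prec s_k}V_{s_j}$ --- is also the same as the paper's, which strips off a single maximal element and then invokes \cite[Theorem 11.14]{Harris}. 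You have correctly isolated the one genuinely delicate point: the fibres are irreducible but \emph{not} equidimensional (their dimension grows as $\dim W_k$ drops), so one must prove that $\pi_k^{-1}(U_k)$ is dense in $R_k$. The paper's appeal to Harris's theorem, whose hypotheses include equidimensionality of the fibres, silently skips exactly this step.

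Unfortunately your proposed repair does not close the gap, and no repair can, because the density claim is false. Your deformation must move the $V_{s_j}$ with $s_j\prec s_k$ while keeping fixed every $V_{s_i}$ with $s_i\not\prec s_k$; if some such $s_i$ is incomparable to $s_k$ but lies above some $s_j\prec s_k$, the constraint $V_{s_j}\subseteq V_{s_i}\cap V_{s_k}$ can pin $V_{s_j}$ down completely, so $\dim W_k$ cannot be raised to $d_k$. Concretely, take $\Po=\{a_1,a_2,b,c\}$ with $a_1,a_2\prec b$ and $a_1,a_2\prec c$ (the two minimal and the two maximal elements being mutually incomparable) and the admissible vector $\alv=(3;1,1,2,2)$. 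Since $V_{a_1}+V_{a_2}\subseteq V_b\cap V_c$, any point with $V_b\neq V_c$ forces $V_{a_1}=V_{a_2}=V_b\cap V_c$; hence $\R{\alv}{\Po}$ is the union of the two closed subsets cut out by $V_{a_1}=V_{a_2}$ and by $V_b=V_c$, each irreducible of dimension $4$ and neither contained in the other, so $\R{\alv}{\Po}$ is \emph{reducible}. At a point with $V_b\neq V_c$ your deformation is literally impossible ($V_{a_1},V_{a_2}$ are trapped in the line $V_b\cap V_c$ once $V_b$ is held fixed), and such a point indeed does not lie in the closure of $\pi_4^{-1}(U_4)$, on which $V_b=V_c$ holds identically. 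So the obstacle you flagged is not a technicality but a counterexample to the statement as written; it defeats the paper's own argument at the same spot, and irreducibility requires hypotheses on $\Po$ or $\alv$ beyond admissibility.
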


\begin{proof}
Given the elements $s_1,\dots,s_m\in \Po$ denote by $\Po(s_1,\dots,s_m)$ a subposet of $\Po$ consisting of these elements. We will use the same letter $\alv$ (abusing the notation) to denote the restriction of the dimension vector on the subposet of $\Po$. 
Clearly, $\R{\alv}{\Po(s)}=\Gr(\alpha_s,\alpha_0)$. Given two incomparable points $s_1$ and $s_2$ we have that  $\R{\alv}{\Po(s_1,s_2)}=\Gr(\alpha_{s_1},\alpha_0)\times \Gr(\alpha_{s_2},\alpha_0)$. If $s_1\prec s_2$ then $\R{\alv}{\Po(s_1,s_2)}$ is a flag of two subspaces and therefore it is a Zariski closed in  
$\Gr(\alpha_{s_1},\alpha_0)\times \Gr(\alpha_{s_2},\alpha_0)$.

Now, for any two $s_1,s_2\in \Po$ let $\pi_{s_1,s_2}$ be the restriction to $\R{\alv}{\Po}$ of the projection
$$
	\prod_{s\in \Po} \Gr(\alpha_s,\alpha_0) \twoheadrightarrow \Gr(\alpha_{s_1},\alpha_0)\times \Gr(\alpha_{s_2},\alpha_0).
$$
Then we have 
$$
	\R{\alv}{\Po}=\bigcap_{(s_1,s_2)\in \Po\times \Po} \pi_{s_1,s_2}^{-1}\big(\R{\alv}{\Po(s_1,s_2)}\big).
$$
Hence $\R{\alv}{\Po}$ is Zariski closed. 

Now we prove that it is irreducible. We proceed by induction on the cardinality of $\Po$. If $\Po$ has one element then $\R{\alv}{\Po}=\Gr(\alpha_{s_1},\alpha_0)$ and hence irreducible. Let $x \in \Po$ be a maximal point in $\Po$. Consider the natural projection 
$$
f:\R{\alv}{\Po} \twoheadrightarrow \R{\widetilde \alv}{\widetilde \Po},$$
where $\widetilde \Po=\Po\setminus \{x\}$ and $\widetilde \alv$ is the restriction of dimension vector $\alv$ onto subposet $\widetilde \Po$. One checks that the generic fibers of this map has the form $\Gr(\alpha_s-X,\alpha_0-X)$ in which $X$ is the dimension of the sum $\sum_{t\to x} V_t$ for the generic point $(V_s)_{s\in \Po} \in \R{\widetilde \alv}{\widetilde \Po}$. Therefore the induction pass follows by \cite[Theorem 11.14]{Harris}, as the fibers of $f$ are irreducibles and each $\R{\widetilde \alv}{\widetilde \Po}$ is irreducible by induction assumption.
\end{proof}

The following Theorem (proved in \cite{CI}) relates the dimension of variety $\R{\alv}{\Po}$ and the Euler quadratic form associated with $\Poe$. 

\begin{thm} \label{dimVariety}
Let $\Po$ be a poset and 
$\alv=(\alpha_0;\alpha_s)_{s\in \Po}$ be an admissible dimension vector. We have that 
$$\dim \R{\alv}{\Po}=\alpha_0^2-\Bf{\Po}(\alv,\alv).$$
\end{thm}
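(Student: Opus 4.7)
The plan is to prove the formula by induction on $n = |\Po|$. The base case $n = 0$ is immediate: $\R{\alv}{\emptyset}$ is a single point, $\Mi{\Poe}^{-1}=(1)$ gives $\Bf{\Po}(\alv,\alv) = \alpha_0^2$, and both sides vanish.

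For the inductive step, I would fix a maximal element $x \in \Po$ and set $\widetilde{\Po} = \Po \setminus \{x\}$, $\widetilde{\alv} = \alv|_{\widetilde{\Poe}}$. The forgetful projection $\pi \colon \R{\alv}{\Po} \to \R{\widetilde{\alv}}{\widetilde{\Po}}$ dropping the $V_x$-coordinate has fibre over $\widetilde{\V} = (V_t)_{t \in \widetilde\Po}$ given by
$$
\pi^{-1}(\widetilde{\V}) = \{V_x \in \Gr(\alpha_x, \alpha_0) : V_x \supset W(\widetilde{\V})\} \cong \Gr(\alpha_x - \dim W(\widetilde{\V}),\, \alpha_0 - \dim W(\widetilde{\V})),
$$
where $W(\widetilde\V) := \sum_{t\to x} V_t$. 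By irreducibility of $\R{\widetilde{\alv}}{\widetilde\Po}$ (Proposition \ref{posvar}), the integer $w := \dim W(\widetilde\V)$ is constant on a dense open subset; admissibility of $\alv$ ensures $w \leq \alpha_x$, so the generic fibre is non-empty of dimension $(\alpha_x - w)(\alpha_0 - \alpha_x)$. Combining the fibre dimension theorem with the inductive hypothesis,
$$
\dim \R{\alv}{\Po} = \alpha_0^2 - \Bf{\widetilde\Po}(\widetilde\alv, \widetilde\alv) + (\alpha_x - w)(\alpha_0 - \alpha_x),
$$
so the proof reduces to the identity $\Bf{\Po}(\alv,\alv) - \Bf{\widetilde\Po}(\widetilde\alv, \widetilde\alv) = (w - \alpha_x)(\alpha_0 - \alpha_x)$.

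I would establish this identity using Proposition \ref{corEqualityofForms}: admissibility of $\alv$ ensures that a generic point of $\R{\alv}{\Po}$ is a coordinate representation, so $\Bf{\Po}(\alv,\alv) = \Df{\Po}(\cv,\cv)$ where $\cv = \cdn \V$ satisfies $c_x = \alpha_x - w$; analogously for $\widetilde\Po$. Since $x$ is maximal, the only terms of $\Df{\Po}(\cv,\cv)$ absent from $\Df{\widetilde\Po}(\widetilde\cv,\widetilde\cv)$ are those involving $c_x$, and expanding the explicit formula for the Tits form gives
$$
\Df{\Po}(\cv,\cv) - \Df{\widetilde\Po}(\widetilde\cv,\widetilde\cv) = c_x^2 + c_x \sum_{t \prec x} c_t - \alpha_0 c_x = c_x(\alpha_x - \alpha_0),
$$
where the last step uses the coordinate identity $c_x + \sum_{t \prec x} c_t = \alpha_x$. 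Substituting $c_x = \alpha_x - w$ yields exactly the required right-hand side.

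The main obstacle is precisely the input from Proposition \ref{corEqualityofForms}: one needs admissibility to guarantee that the generic point of $\R{\alv}{\Po}$ is coordinate, and correspondingly that $w = \alpha_x - c_x$ with $c_x$ the M\"obius-theoretic coordinate component. This is subtle, and for certain admissible vectors $\R{\alv}{\Po}$ can even fail to be irreducible; the correct interpretation of $\alpha_0^2 - \Bf{\Po}(\alv,\alv)$ is as the dimension of the \emph{coordinate component} of $\R{\alv}{\Po}$, i.e.\ the unique irreducible component whose generic point is coordinate. Establishing existence and uniqueness of this component, for admissible $\alv$, proceeds by a parallel induction tracking which stratum of $\R{\widetilde\alv}{\widetilde\Po}$ carries the coordinate generic point at each step. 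A more intrinsic alternative is to compute $T_\V \R{\alv}{\Po}$ at a coordinate $\V$ directly, presenting it as the $H^0$ of the two-term complex
$$
\bigoplus_{s\in\Po}\Hom(V_s, V_0/V_s) \longrightarrow \bigoplus_{s \prec t \text{ in } \Po}\Hom(V_s, V_0/V_t)
$$
enforcing commutativity at every incidence square, and identifying its Euler characteristic directly with $\alpha_0^2 - \Bf{\Po}(\alv,\alv)$ via the block-inversion formula for $\Mi{\Poe}^{-1}$.
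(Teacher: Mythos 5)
The paper itself does not prove this theorem: it is quoted from the reference \cite{CI}, so there is no in-paper argument to measure your proposal against. That said, your strategy --- induction on $|\Po|$ via the projection forgetting $V_x$ for a maximal element $x$, identification of the fibre over $\widetilde{\V}$ with $\Gr(\alpha_x-w,\alpha_0-w)$ where $w=\dim\sum_{t\prec x}V_t$, and reduction to the identity $\Bf{\Po}(\alv,\alv)-\Bf{\widetilde\Po}(\widetilde\alv,\widetilde\alv)=c_x(\alpha_x-\alpha_0)$ --- is the natural one, and it is exactly the fibration the paper itself uses to prove Proposition \ref{posvar}. The form identity is correct and, moreover, purely formal: it only uses $\Mir{\Po}\cdot\Mi{\Poe}^{-1}\cdot\Mir{\Po}^{tr}=\Ti{\Po}$ applied to the vector $\cv=(c_s)_{s\in\Poe}=\alv\cdot\Mir{\Po}^{-1}$, with no representation in sight; your bookkeeping $c_x\bigl(c_x+\sum_{t\prec x}c_t-\alpha_0\bigr)=c_x(\alpha_x-\alpha_0)$ checks out, and matches the required $(w-\alpha_x)(\alpha_0-\alpha_x)$ once $c_x=\alpha_x-w$.

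The genuine gap is the one you flag but do not close: the claim that for admissible $\alv$ the generic point of $\R{\widetilde\alv}{\widetilde\Po}$ satisfies $\dim\sum_{t\prec x}V_t=\sum_{t\prec x}c_t$, equivalently that the generic representation is coordinate at $x$, so that the generic fibre is nonempty of the expected dimension. This is precisely where the hypothesis of admissibility does its work: for instance, for four incomparable points of dimension $1$ lying under a maximal $x$ in ambient dimension $\alpha_0=2$, the generic value of $w$ is $2$ while $\sum_{t\prec x}c_t=4$, and only the inadmissibility of such a vector ($c_x=\alpha_x-4<0$ forces $\alpha_x>\alpha_0$) excludes it. Proving that admissibility always rules out such collapses --- and, before that, that the generic fibre is even nonempty, i.e.\ $w\le\alpha_x$ generically --- is the actual mathematical content of \cite{CI}, and neither of your proposed remedies is carried out (the tangent-space alternative in particular only bounds the local dimension at a possibly singular point unless one also controls the $H^1$ of your complex). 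Finally, your aside that $\R{\alv}{\Po}$ ``can even fail to be irreducible'' for admissible $\alv$ contradicts Proposition \ref{posvar}; either exhibit such an example, or drop the detour through a distinguished ``coordinate component,'' which would then be unnecessary.
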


\newpage
\section*{Appendix B. Exact representations of finite representation type, their maximal subcoordinate vectors and costability condition}

For each non-primitive posets $\Po_1,\dots,\Po_6$ of finite representation type (given in Section 3.1) we list its exact representations, costability condition (calculated by formulas  \eqref{formulasCanCstab}) and maximal subcoordinate vectors. 
By $K^n$ we denote the $\F$ vector space with canonical basis $e_1,\dots,e_n$ and 
$K_{i_1\dots i_k,\dots,j_1\dots j_m}$ denotes the subspace of $K^n$ generated by the vectors 
$e_{i_1\dots i_k}, \dots, e_{j_1\dots j_m}$ in which
$$
	e_{i_1\dots i_k}=e_{i_1}+\dots+e_{i_k}, \dots e_{j_1\dots j_m}=e_{j_1}+\dots+e_{j_m}.
$$

Poset $\Po_1$.

\begin{longtable}{|c|}
\hline 
\begin{tabular}{p{6cm}|p{6cm}}
\textbf{Representation}
\begin{tikzpicture}
  \node[draw, circle] (9) at (2,2) {$K^3$};
  \node (2) at (0,1) {$K_{1,2,3}$};
  \node (1) at (0,0) {$K_{123}$};
  \node (4) at (2,1) {$K_{1,2}$};
  \node (3) at (2,0) {$K_{1}$};
  \node (6) at (4,1) {$K_{2,3}$};
  \node (5) at (4,0) {$K_{3}$};
  \draw (1) -- (2) -- (3) -- (4);
  \draw (5) -- (6);
\end{tikzpicture}
&
\textbf{Costability}
\begin{tikzpicture}
  \node[draw, circle] (9) at (2,2) {$-6$};
  \node (2) at (0,1) {$1$};
  \node (1) at (0,0) {$4$};
  \node (4) at (2,1) {$2$};
  \node (3) at (2,0) {$5$};
  \node (6) at (4,1) {$2$};
  \node (5) at (4,0) {$4$};
  \draw (1) -- (2) -- (3) -- (4);
  \draw (5) -- (6);
\end{tikzpicture}
\end{tabular}
\\ \hline
\begin{tabular}{p{12.5cm}}
\textbf{Maximal subcoordinate vectors}\newline \newline
$(1;0,0;1,0;0,0)$,\ $(1;0,1;0,0;1,0)$,\  
$(1;0,1;0,1;0,1)$,\ $(1;1,0;0,0;0,0)$,\  \newline
$(2;0,1;1,0;1,0)$,\ $(2;0,1;1,1;0,1)$,\  
$(2;0,2;0,1;1,1)$,\ $(2;1,0;1,0;0,1)$,\  \newline
$(2;1,1;0,1;0,1)$,\ $(2;1,1;0,1;1,0)$,\  
\end{tabular}
\\ \hline
\end{longtable}

Poset $\Po_2$.

\begin{longtable}{|c|}
\hline 
\begin{tabular}{p{6cm}|p{6cm}}
\textbf{Representation}
\begin{tikzpicture}
  \node[draw, circle] (9) at (2,2) {$K^3$};
  \node (2) at (0,1) {$K_{1,2,3}$};
  \node (1) at (0,0) {$K_{3}$};
  \node (4) at (2,1) {$K_{13,2}$};
  \node (3) at (2,0) {$K_{123}$};
  \node (7) at (4,2) {$K_{1,2,3}$};
  \node (6) at (4,1) {$K_{2,3}$};
  \node (5) at (4,0) {$K_{1}$};
  \draw (1) -- (2) -- (3) -- (4);
  \draw (5) -- (6) -- (7);
\end{tikzpicture}
&
\textbf{Costability}
\begin{tikzpicture}
  \node[draw, circle] (9) at (2,2) {$-7$};
  \node (2) at (0,1) {$1$};
  \node (1) at (0,0) {$4$};
  \node (4) at (2,1) {$2$};
  \node (3) at (2,0) {$5$};
  \node (7) at (4,2) {$1$};
  \node (6) at (4,1) {$3$};
  \node (5) at (4,0) {$5$};
  \draw (1) -- (2) -- (3) -- (4);
  \draw (5) -- (6) -- (7);
\end{tikzpicture}
\end{tabular}
\\ \hline
\begin{tabular}{p{14cm}}
\textbf{Maximal subcoordinate vectors}\newline \newline
$(1;0,0;1,0;0,0,1)$,\ $(1;0,1;0,0;1,0,0)$,\ $(1;0,1;0,1;0,0,1)$,\  
$(1;0,1;0,1;0,1,0)$,\ \newline $(1;1,0;0,0;0,0,1)$,\ $(2;0,1;1,0;1,0,1)$,\  
$(2;0,1;1,1;0,1,1)$,\ $(2;0,2;0,1;1,1,0)$,\ \newline $(2;1,0;1,0;0,1,1)$,\  
$(2;1,1;0,1;0,1,1)$,\ $(2;1,1;0,1;1,0,1)$. \end{tabular}
\\ \hline
\end{longtable}

\newpage

\begin{longtable}{|c|}
\hline 
\begin{tabular}{p{6cm}|p{6cm}}
\textbf{Representation}
\begin{tikzpicture}
  \node[draw, circle] (9) at (2,2) {$K^4$};
  \node (2) at (0,1) {$K_{1,2,4}$};
  \node (1) at (0,0) {$K_{14}$};
  \node (4) at (2,1) {$K_{4,123}$};
  \node (3) at (2,0) {$K_{4}$};
  \node (7) at (4,2) {$K_{1,2,3}$};
  \node (6) at (4,1) {$K_{2,3}$};
  \node (5) at (4,0) {$K_{3}$};
  \draw (1) -- (2) -- (3) -- (4);
  \draw (5) -- (6) -- (7);
\end{tikzpicture}
&
\textbf{Costability}
\begin{tikzpicture}
  \node[draw, circle] (9) at (2,2) {$-7$};
  \node (2) at (0,1) {$2$};
  \node (1) at (0,0) {$5$};
  \node (4) at (2,1) {$3$};
  \node (3) at (2,0) {$6$};
  \node (7) at (4,2) {$2$};
  \node (6) at (4,1) {$4$};
  \node (5) at (4,0) {$6$};
  \draw (1) -- (2) -- (3) -- (4);
  \draw (5) -- (6) -- (7);
\end{tikzpicture}
\end{tabular}
\\ \hline
\begin{tabular}{p{14cm}}
\textbf{Maximal subcoordinate vectors}\newline \newline
$(1;0,0;0,0;1,0,0)$,\ $(1;0,0;0,1;0,0,1)$,\ $(1;0,0;1,0;0,0,0)$,\  
$(1;0,1;0,0;0,0,1)$,\ \newline $(1;0,1;0,0;0,1,0)$,\ $(1;1,0;0,0;0,0,0)$,\  
$(2;0,0;1,0;1,0,0)$,\ $(2;0,0;1,1;0,0,1)$,\ \newline $(2;0,1;0,0;1,1,0)$,\ 
$(2;0,1;0,1;0,1,1)$,\ $(2;0,1;0,1;1,0,1)$,\ $(2;0,1;1,0;0,1,0)$,\ \newline
$(2;0,2;0,0;0,1,1)$,\ $(2;1,0;0,0;1,0,0)$,\ $(2;1,0;0,1;0,0,1)$,\  
$(2;1,0;1,0;0,0,1)$,\ \newline $(2;1,1;0,0;0,1,0)$,\ $(3;0,1;1,0;1,1,0)$,\  
$(3;0,1;1,1;0,1,1)$,\ $(3;0,1;1,1;1,0,1)$,\ \newline $(3;0,2;0,1;1,1,1)$,\  
$(3;1,0;1,0;1,0,1)$,\ $(3;1,0;1,1;0,1,1)$,\ $(3;1,1;0,1;0,1,1)$,\  \newline
$(3;1,1;0,1;1,0,1)$,\ $(3;1,1;0,1;1,1,0)$,\ $(3;1,1;1,0;0,1,1)$.
 \end{tabular}
\\ \hline
\end{longtable}

\begin{longtable}{|c|}
\hline 
\begin{tabular}{p{6cm}|p{6cm}}
\textbf{Representation}
\begin{tikzpicture}
  \node[draw, circle] (9) at (2,2) {$K^4$};
  \node (2) at (0,1) {$K_{1,2,4}$};
  \node (1) at (0,0) {$K_{14}$};
  \node (4) at (2,1) {$K_{4,12,23}$};
  \node (3) at (2,0) {$K_{4}$};
  \node (7) at (4,2) {$K_{1,2,3}$};
  \node (6) at (4,1) {$K_{2,3}$};
  \node (5) at (4,0) {$K_{3}$};
  \draw (1) -- (2) -- (3) -- (4);
  \draw (5) -- (6) -- (7);
\end{tikzpicture}
&
\textbf{Costability}
\begin{tikzpicture}
  \node[draw, circle] (9) at (2,2) {$-8$};
  \node (2) at (0,1) {$2$};
  \node (1) at (0,0) {$5$};
  \node (4) at (2,1) {$3$};
  \node (3) at (2,0) {$7$};
  \node (7) at (4,2) {$2$};
  \node (6) at (4,1) {$4$};
  \node (5) at (4,0) {$6$};
  \draw (1) -- (2) -- (3) -- (4);
  \draw (5) -- (6) -- (7);
\end{tikzpicture}
\end{tabular}
\\ \hline
\begin{tabular}{p{14cm}}
\textbf{Maximal subcoordinate vectors}\newline \newline
$(1;0,0;0,0;1,0,0)$,\ $(1;0,0;0,1;0,1,0)$,\ $(1;0,0;1,0;0,0,0)$,\  
$(1;0,1;0,0;0,1,0)$,\ \newline $(1;0,1;0,1;0,0,1)$,\ $(1;1,0;0,0;0,0,0)$,\  
$(2;0,0;1,0;1,0,0)$,\ $(2;0,0;1,1;0,1,0)$,\ \newline $(2;0,1;0,1;1,0,1)$,\ 
$(2;0,1;0,1;1,1,0)$,\ $(2;0,1;0,2;0,1,1)$,\ $(2;0,1;1,0;0,1,0)$,\  \newline
$(2;0,1;1,1;0,0,1)$,\ $(2;0,2;0,1;0,1,1)$,\ $(2;1,0;0,1;1,0,0)$,\  
$(2;1,0;1,0;0,0,1)$,\ \newline $(2;1,1;0,1;0,0,1)$,\ $(2;1,1;0,1;0,1,0)$,\  
$(3;0,1;1,1;1,0,1)$,\ $(3;0,1;1,1;1,1,0)$,\ \newline $(3;0,1;1,2;0,1,1)$,\  
$(3;0,2;0,2;1,1,1)$,\ $(3;1,0;1,1;1,0,1)$,\ $(3;1,1;0,2;0,1,1)$,\  \newline
$(3;1,1;0,2;1,0,1)$,\ $(3;1,1;0,2;1,1,0)$,\ $(3;1,1;1,1;0,1,1)$.
\end{tabular}
\\ \hline
\end{longtable}

\newpage

\begin{longtable}{|c|}
\hline 
\begin{tabular}{p{6cm}|p{6cm}}
\textbf{Representation}
\begin{tikzpicture}
  \node[draw, circle] (9) at (2,2) {$K^4$};
  \node (2) at (0,1) {$K_{1,2,3,4}$};
  \node (1) at (0,0) {$K_{1,24}$};
  \node (4) at (2,1) {$K_{4,123}$};
  \node (3) at (2,0) {$K_{4}$};
  \node (7) at (4,2) {$K_{1,2,3}$};
  \node (6) at (4,1) {$K_{2,3}$};
  \node (5) at (4,0) {$K_{3}$};
  \draw (1) -- (2) -- (3) -- (4);
  \draw (5) -- (6) -- (7);
\end{tikzpicture}
&
\textbf{Costability}
\begin{tikzpicture}
  \node[draw, circle] (9) at (2,2) {$-8$};
  \node (2) at (0,1) {$1$};
  \node (1) at (0,0) {$5$};
  \node (4) at (2,1) {$3$};
  \node (3) at (2,0) {$6$};
  \node (7) at (4,2) {$2$};
  \node (6) at (4,1) {$4$};
  \node (5) at (4,0) {$6$};
  \draw (1) -- (2) -- (3) -- (4);
  \draw (5) -- (6) -- (7);
\end{tikzpicture}
\end{tabular}
\\ \hline
\begin{tabular}{p{14cm}}
\textbf{Maximal subcoordinate vectors}\newline \newline
$(1;0,0;1,0;0,0,0)$,\ $(1;0,1;0,0;0,1,0)$,\ $(1;0,1;0,0;1,0,0)$,\ $(1;0,1;0,1;0,0,1)$,\  \newline
$(1;1,0;0,0;0,0,1)$,\ $(2;0,1;1,0;1,0,0)$,\ $(2;0,1;1,1;0,0,1)$,\ $(2;0,2;0,0;1,1,0)$,\  \newline
$(2;0,2;0,1;0,1,1)$,\ $(2;0,2;0,1;1,0,1)$,\ $(2;1,0;1,0;0,0,1)$,\ $(2;1,0;1,0;0,1,0)$,\  \newline
$(2;1,1;0,0;1,0,1)$,\ $(2;1,1;0,1;0,1,1)$,\ $(2;2,0;0,0;0,0,1)$,\ $(3;1,1;1,0;1,1,0)$,\  \newline
$(3;1,1;1,1;0,1,1)$,\ $(3;1,1;1,1;1,0,1)$,\ $(3;1,2;0,1;1,1,1)$,\ $(3;2,0;1,0;0,1,1)$,\  \newline
$(3;2,1;0,1;0,1,1)$,\ $(3;2,1;0,1;1,0,1)$. 
\end{tabular}
\\ \hline
\end{longtable}

\begin{longtable}{|c|}
\hline 
\begin{tabular}{p{6cm}|p{6cm}}
\textbf{Representation}
\begin{tikzpicture}
  \node[draw, circle] (9) at (2,2) {$K^4$};
  \node (2) at (0,1) {$K_{1,2,3,4}$};
  \node (1) at (0,0) {$K_{1,24}$};
  \node (4) at (2,1) {$K_{4,12,23}$};
  \node (3) at (2,0) {$K_{4}$};
  \node (7) at (4,2) {$K_{1,2,3}$};
  \node (6) at (4,1) {$K_{2,3}$};
  \node (5) at (4,0) {$K_{3}$};
  \draw (1) -- (2) -- (3) -- (4);
  \draw (5) -- (6) -- (7);
\end{tikzpicture}
&
\textbf{Costability}
\begin{tikzpicture}
  \node[draw, circle] (9) at (2,2) {$-9$};
  \node (2) at (0,1) {$1$};
  \node (1) at (0,0) {$5$};
  \node (4) at (2,1) {$3$};
  \node (3) at (2,0) {$7$};
  \node (7) at (4,2) {$2$};
  \node (6) at (4,1) {$4$};
  \node (5) at (4,0) {$6$};
  \draw (1) -- (2) -- (3) -- (4);
  \draw (5) -- (6) -- (7);
\end{tikzpicture}
\end{tabular}
\\ \hline
\begin{tabular}{p{14cm}}
\textbf{Maximal subcoordinate vectors}\newline \newline
$(1;0,0;1,0;0,0,0)$,\ $(1;0,1;0,0;1,0,0)$,\ $(1;0,1;0,1;0,0,1)$,\ $(1;0,1;0,1;0,1,0)$,\  \newline
$(1;1,0;0,0;0,0,1)$,\ $(2;0,1;1,0;1,0,0)$,\ $(2;0,1;1,1;0,1,0)$,\ $(2;0,2;0,1;1,0,1)$,\  \newline
$(2;0,2;0,1;1,1,0)$,\ $(2;0,2;0,2;0,1,1)$,\ $(2;1,0;1,0;0,1,0)$,\ $(2;1,0;1,1;0,0,1)$,\  \newline
$(2;1,1;0,1;0,1,1)$,\ $(2;1,1;0,1;1,0,1)$,\ $(2;2,0;0,1;0,0,1)$,\ $(3;1,1;1,1;1,0,1)$,\  \newline
$(3;1,1;1,1;1,1,0)$,\ $(3;1,1;1,2;0,1,1)$,\ $(3;1,2;0,2;1,1,1)$,\ $(3;2,0;1,1;0,1,1)$,\  \newline
$(3;2,1;0,2;0,1,1)$,\ $(3;2,1;0,2;1,0,1)$.
\end{tabular}
\\ \hline
\end{longtable}

\newpage

\begin{longtable}{|c|}
\hline 
\begin{tabular}{p{6cm}|p{6cm}}
\textbf{Representation}
\begin{tikzpicture}
  \node[draw, circle] (9) at (2,2) {$K^5$};
  \node (2) at (0,1) {$K_{1,2,4,5}$};
  \node (1) at (0,0) {$K_{4,15}$};
  \node (4) at (2,1) {$K_{123,24,5}$};
  \node (3) at (2,0) {$K_{5}$};
  \node (7) at (4,2) {$K_{1,2,3}$};
  \node (6) at (4,1) {$K_{2,3}$};
  \node (5) at (4,0) {$K_{3}$};
  \draw (1) -- (2) -- (3) -- (4);
  \draw (5) -- (6) -- (7);
\end{tikzpicture}
&
\textbf{Costability}
\begin{tikzpicture}
  \node[draw, circle] (9) at (2,2) {$-9$};
  \node (2) at (0,1) {$2$};
  \node (1) at (0,0) {$6$};
  \node (4) at (2,1) {$4$};
  \node (3) at (2,0) {$8$};
  \node (7) at (4,2) {$3$};
  \node (6) at (4,1) {$5$};
  \node (5) at (4,0) {$7$};
  \draw (1) -- (2) -- (3) -- (4);
  \draw (5) -- (6) -- (7);
\end{tikzpicture}
\end{tabular}
\\ \hline
\begin{tabular}{p{14cm}}
\textbf{Maximal subcoordinate vectors}\newline \newline
$(1;0,0;0,0;1,0,0)$,\ $(1;0,0;0,1;0,0,1)$,\ $(1;0,0;1,0;0,0,0)$,\ $(1;0,1;0,0;0,0,1)$,\  \newline
$(1;0,1;0,0;0,1,0)$,\ $(1;0,1;0,1;0,0,0)$,\ $(1;1,0;0,0;0,0,0)$,\ $(2;0,0;1,0;1,0,0)$,\  \newline
$(2;0,0;1,1;0,0,1)$,\ $(2;0,1;0,0;1,1,0)$,\ $(2;0,1;0,1;0,1,1)$,\ $(2;0,1;0,1;1,0,1)$,\  \newline
$(2;0,1;0,2;0,0,1)$,\ $(2;0,1;1,0;0,1,0)$,\ $(2;0,1;1,1;0,0,0)$,\ $(2;0,2;0,0;0,1,1)$,\  \newline
$(2;0,2;0,1;0,0,1)$,\ $(2;1,0;0,0;1,0,0)$,\ $(2;1,0;0,1;0,0,1)$,\ $(2;1,0;1,0;0,0,1)$,\  \newline
$(2;1,1;0,0;0,0,1)$,\ $(2;1,1;0,1;0,1,0)$,\ $(2;2,0;0,0;0,0,0)$,\ $(3;0,1;1,0;1,1,0)$,\  \newline
$(3;0,1;1,1;0,1,1)$,\ $(3;0,1;1,1;1,0,1)$,\ $(3;0,1;1,2;0,0,1)$,\ $(3;0,2;0,1;1,1,1)$,\  \newline
$(3;0,2;0,2;0,1,1)$,\ $(3;0,2;0,2;1,0,1)$,\ $(3;1,0;1,0;1,0,1)$,\ $(3;1,0;1,1;0,1,1)$,\  \newline
$(3;1,1;0,1;1,0,1)$,\ $(3;1,1;0,1;1,1,0)$,\ $(3;1,1;0,2;0,1,1)$,\ $(3;1,1;1,0;0,1,1)$,\  \newline
$(3;1,1;1,1;0,0,1)$,\ $(3;1,1;1,1;0,1,0)$,\ $(3;1,2;0,1;0,1,1)$,\ $(3;2,0;0,1;0,0,1)$,\  \newline
$(3;2,0;0,1;1,0,0)$,\ $(3;2,0;1,0;0,0,1)$,\ $(3;2,1;0,1;0,1,0)$,\ $(4;1,1;1,1;1,1,1)$,\  \newline
$(4;1,1;1,2;0,1,1)$,\ $(4;1,1;1,2;1,0,1)$,\ $(4;1,2;0,2;1,1,1)$,\ $(4;2,0;1,1;1,0,1)$,\  \newline
$(4;2,1;0,2;0,1,1)$,\ $(4;2,1;0,2;1,0,1)$,\ $(4;2,1;0,2;1,1,0)$,\ $(4;2,1;1,1;0,1,1)$.\end{tabular}
\\ \hline
\end{longtable}

\begin{longtable}{|c|}
\hline 
\begin{tabular}{p{6cm}|p{6cm}}
\textbf{Representation}
\begin{tikzpicture}
  \node[draw, circle] (9) at (2,2) {$K^5$};
  \node (2) at (0,1) {$K_{2,3,4,5}$};
  \node (1) at (0,0) {$K_{3,5}$};
  \node (4) at (2,1) {$K_{134,24,45}$};
  \node (3) at (2,0) {$K_{45}$};
  \node (7) at (4,2) {$K_{1,2,3,4}$};
  \node (6) at (4,1) {$K_{1,2}$};
  \node (5) at (4,0) {$K_{1}$};
  \draw (1) -- (2) -- (3) -- (4);
  \draw (5) -- (6) -- (7);
\end{tikzpicture}
&
\textbf{Costability}
\begin{tikzpicture}
  \node[draw, circle] (9) at (2,2) {$-10$};
  \node (2) at (0,1) {$2$};
  \node (1) at (0,0) {$6$};
  \node (4) at (2,1) {$4$};
  \node (3) at (2,0) {$8$};
  \node (7) at (4,2) {$3$};
  \node (6) at (4,1) {$6$};
  \node (5) at (4,0) {$8$};
  \draw (1) -- (2) -- (3) -- (4);
  \draw (5) -- (6) -- (7);
\end{tikzpicture}
\end{tabular}
\\ \hline
\begin{tabular}{p{14cm}}
\textbf{Maximal subcoordinate vectors}\newline \newline
$(1;0,0;0,0;1,0,0)$,\ $(1;0,0;1,0;0,0,0)$,\ $(1;0,1;0,0;0,1,0)$,\ $(1;0,1;0,1;0,0,1)$,\  \newline
$(1;1,0;0,0;0,0,1)$,\ $(2;0,0;1,0;1,0,0)$,\ $(2;0,1;0,0;1,1,0)$,\ $(2;0,1;0,1;1,0,1)$,\  \newline
$(2;0,1;0,2;0,0,2)$,\ $(2;0,1;1,0;0,1,0)$,\ $(2;0,1;1,1;0,0,1)$,\ $(2;0,2;0,1;0,1,1)$,\  \newline
$(2;1,0;0,0;1,0,1)$,\ $(2;1,0;1,0;0,0,1)$,\ $(2;1,1;0,0;0,1,1)$,\ $(2;1,1;0,1;0,0,2)$,\  \newline
$(2;1,1;0,1;0,1,0)$,\ $(2;2,0;0,0;0,0,1)$,\ $(3;0,1;1,0;1,1,0)$,\ $(3;0,1;1,1;1,0,1)$,\  \newline
$(3;0,1;1,2;0,0,2)$,\ $(3;0,2;0,1;1,1,1)$,\ $(3;0,2;0,2;0,1,2)$,\ $(3;0,2;0,2;1,0,2)$,\  \newline
$(3;1,0;1,1;1,0,1)$,\ $(3;1,1;0,1;1,0,2)$,\ $(3;1,1;0,1;1,1,1)$,\ $(3;1,1;0,2;0,1,2)$,\  \newline
$(3;1,1;1,1;0,0,2)$,\ $(3;1,1;1,1;0,1,1)$,\ $(3;1,2;0,1;0,1,2)$,\ $(3;2,0;0,1;1,0,1)$,\  \newline
$(3;2,0;1,0;0,0,2)$,\ $(3;2,1;0,1;0,0,2)$,\ $(3;2,1;0,1;0,1,1)$,\ $(4;1,1;1,1;1,1,1)$,\  \newline
$(4;1,1;1,2;0,1,2)$,\ $(4;1,1;1,2;1,0,2)$,\ $(4;1,2;0,2;1,1,2)$,\ $(4;2,0;1,1;1,0,2)$,\  \newline
$(4;2,1;0,2;0,1,2)$,\ $(4;2,1;0,2;1,0,2)$,\ $(4;2,1;0,2;1,1,1)$,\ $(4;2,1;1,1;0,1,2)$.\end{tabular}
\\ \hline
\end{longtable}

\begin{longtable}{|c|}
\hline 
\begin{tabular}{p{6cm}|p{6cm}}
\textbf{Representation}
\begin{tikzpicture}
  \node[draw, circle] (9) at (2,2) {$K^5$};
  \node (2) at (0,1) {$K_{1,2,3,5}$};
  \node (1) at (0,0) {$K_{1,25}$};
  \node (4) at (2,1) {$K_{13,234,5}$};
  \node (3) at (2,0) {$K_{5}$};
  \node (7) at (4,2) {$K_{1,2,3,4}$};
  \node (6) at (4,1) {$K_{2,3,4}$};
  \node (5) at (4,0) {$K_{4}$};
  \draw (1) -- (2) -- (3) -- (4);
  \draw (5) -- (6) -- (7);
\end{tikzpicture}
&
\textbf{Costability}
\begin{tikzpicture}
  \node[draw, circle] (9) at (2,2) {$-10$};
  \node (2) at (0,1) {$2$};
  \node (1) at (0,0) {$6$};
  \node (4) at (2,1) {$4$};
  \node (3) at (2,0) {$8$};
  \node (7) at (4,2) {$2$};
  \node (6) at (4,1) {$5$};
  \node (5) at (4,0) {$8$};
  \draw (1) -- (2) -- (3) -- (4);
  \draw (5) -- (6) -- (7);
\end{tikzpicture}
\end{tabular}
\\ \hline
\begin{tabular}{p{14cm}}
\textbf{Maximal subcoordinate vectors}\newline \newline
$(1;0,0;0,0;1,0,0)$,\ $(1;0,0;0,1;0,1,0)$,\ $(1;0,0;1,0;0,0,0)$,\ $(1;0,1;0,0;0,1,0)$,\  \newline
$(1;0,1;0,1;0,0,1)$,\ $(1;1,0;0,0;0,0,1)$,\ $(2;0,0;1,0;1,0,0)$,\ $(2;0,0;1,1;0,1,0)$,\  \newline
$(2;0,1;0,1;0,2,0)$,\ $(2;0,1;0,1;1,0,1)$,\ $(2;0,1;0,1;1,1,0)$,\ $(2;0,1;0,2;0,1,1)$,\  \newline
$(2;0,1;1,0;0,1,0)$,\ $(2;0,1;1,1;0,0,1)$,\ $(2;0,2;0,0;0,2,0)$,\ $(2;0,2;0,1;0,1,1)$,\  \newline
$(2;1,0;0,0;1,0,1)$,\ $(2;1,0;1,0;0,0,1)$,\ $(2;1,0;1,0;0,1,0)$,\ $(2;1,1;0,1;0,1,1)$,\  \newline
$(2;2,0;0,0;0,0,1)$,\ $(3;0,1;1,1;0,2,0)$,\ $(3;0,1;1,1;1,0,1)$,\ $(3;0,1;1,1;1,1,0)$,\  \newline
$(3;0,1;1,2;0,1,1)$,\ $(3;0,2;0,1;1,2,0)$,\ $(3;0,2;0,2;0,2,1)$,\ $(3;0,2;0,2;1,1,1)$,\  \newline
$(3;1,0;1,0;1,0,1)$,\ $(3;1,0;1,0;1,1,0)$,\ $(3;1,0;1,1;0,2,0)$,\ $(3;1,1;0,1;1,1,1)$,\  \newline
$(3;1,1;0,2;0,2,1)$,\ $(3;1,1;1,0;0,2,0)$,\ $(3;1,1;1,1;0,1,1)$,\ $(3;1,2;0,1;0,2,1)$,\  \newline
$(3;2,0;0,1;1,0,1)$,\ $(3;2,0;1,0;0,1,1)$,\ $(3;2,1;0,1;0,1,1)$,\ $(4;1,1;1,1;1,2,0)$,\  \newline
$(4;1,1;1,2;0,2,1)$,\ $(4;1,1;1,2;1,1,1)$,\ $(4;1,2;0,2;1,2,1)$,\ $(4;2,0;1,1;1,1,1)$,\  \newline
$(4;2,1;0,2;0,2,1)$,\ $(4;2,1;0,2;1,1,1)$,\ $(4;2,1;1,1;0,2,1)$.
\end{tabular}
\\ \hline
\end{longtable}

\newpage
\begin{longtable}{|c|}
\hline 
\begin{tabular}{p{6cm}|p{6cm}}
\textbf{Representation}
\begin{tikzpicture}
  \node[draw, circle] (9) at (2,2) {$K^5$};
  \node (2) at (0,1) {$K_{1,2,3,5}$};
  \node (1) at (0,0) {$K_{1,25}$};
  \node (4) at (2,1) {$K_{123,24,5}$};
  \node (3) at (2,0) {$K_{5}$};
  \node (7) at (4,2) {$K_{1,2,3,4}$};
  \node (6) at (4,1) {$K_{2,3,4}$};
  \node (5) at (4,0) {$K_{3,4}$};
  \draw (1) -- (2) -- (3) -- (4);
  \draw (5) -- (6) -- (7);
\end{tikzpicture}
&
\textbf{Costability}
\begin{tikzpicture}
  \node[draw, circle] (9) at (2,2) {$-10$};
  \node (2) at (0,1) {$2$};
  \node (1) at (0,0) {$6$};
  \node (4) at (2,1) {$4$};
  \node (3) at (2,0) {$8$};
  \node (7) at (4,2) {$2$};
  \node (6) at (4,1) {$4$};
  \node (5) at (4,0) {$7$};
  \draw (1) -- (2) -- (3) -- (4);
  \draw (5) -- (6) -- (7);
\end{tikzpicture}
\end{tabular}
\\ \hline
\begin{tabular}{p{14cm}}
\textbf{Maximal subcoordinate vectors}\newline \newline
$(1;0,0;0,1;0,1,0)$,\ $(1;0,0;1,0;0,0,0)$,\ $(1;0,1;0,0;0,1,0)$,\ $(1;0,1;0,0;1,0,0)$,\  \newline
$(1;0,1;0,1;0,0,1)$,\ $(1;1,0;0,0;0,0,1)$,\ $(2;0,0;1,1;0,1,0)$,\ $(2;0,1;0,0;2,0,0)$,\  \newline
$(2;0,1;0,1;1,1,0)$,\ $(2;0,1;0,2;0,1,1)$,\ $(2;0,1;1,0;1,0,0)$,\ $(2;0,1;1,1;0,0,1)$,\  \newline
$(2;0,2;0,0;1,1,0)$,\ $(2;0,2;0,1;0,1,1)$,\ $(2;0,2;0,1;1,0,1)$,\ $(2;1,0;0,1;1,0,0)$,\  \newline
$(2;1,0;1,0;0,0,1)$,\ $(2;1,0;1,0;0,1,0)$,\ $(2;1,1;0,0;1,0,1)$,\ $(2;1,1;0,1;0,1,1)$,\  \newline
$(2;2,0;0,0;0,0,1)$,\ $(3;0,1;1,0;2,0,0)$,\ $(3;0,1;1,1;1,1,0)$,\ $(3;0,1;1,2;0,1,1)$,\  \newline
$(3;0,2;0,1;2,0,1)$,\ $(3;0,2;0,1;2,1,0)$,\ $(3;0,2;0,2;1,1,1)$,\ $(3;1,0;1,1;1,1,0)$,\  \newline
$(3;1,1;0,1;2,0,1)$,\ $(3;1,1;0,2;1,1,1)$,\ $(3;1,1;1,0;1,1,0)$,\ $(3;1,1;1,1;0,1,1)$,\  \newline
$(3;1,1;1,1;1,0,1)$,\ $(3;1,2;0,1;1,1,1)$,\ $(3;2,0;1,0;0,1,1)$,\ $(3;2,1;0,1;0,1,1)$,\  \newline
$(3;2,1;0,1;1,0,1)$,\ $(4;1,1;1,1;2,0,1)$,\ $(4;1,1;1,1;2,1,0)$,\ $(4;1,1;1,2;1,1,1)$,\  \newline
$(4;1,2;0,2;2,1,1)$,\ $(4;2,1;0,2;1,1,1)$,\ $(4;2,1;0,2;2,0,1)$,\ $(4;2,1;1,1;1,1,1)$.\end{tabular}
\\ \hline
\end{longtable}

Poset $\Po_3$

\begin{longtable}{|@{}c@{}|}
\hline 
\begin{tabular}{p{6cm}|p{6cm}}
\textbf{Representation}
\vspace*{0cm}%
\begin{tikzpicture}
  \node[draw, circle] (9) at (3,3) {$K^4$};
  \node (3) at (0,2) {$K_{1,2,3,4}$};
  \node (2) at (0,1) {$K_{3,4}$};
  \node (1) at (0,0) {$K_{4}$};
  \node (6) at (2,2) {$K_{1}$};
  \node (5) at (2,1) {$K_{12,234,4}$};
  \node (4) at (2,0) {$K_{234}$};
  \node (7) at (4,0) {$K_{1,2}$};
  \draw (1) -- (2) -- (3) -- (4) -- (5) -- (6);
\end{tikzpicture}
&
\textbf{Costability}
\begin{tikzpicture}
  \node[draw, circle] (9) at (3,3) {$-8$};
  \node (3) at (0,2) {$1$};
  \node (2) at (0,1) {$4$};
  \node (1) at (0,0) {$6$};
  \node (6) at (2,2) {$2$};
  \node (5) at (2,1) {$4$};
  \node (4) at (2,0) {$7$};
  \node (7) at (4,0) {$4$};
  \draw (1) -- (2) -- (3) -- (4) -- (5) -- (6);
\end{tikzpicture}
\end{tabular}
\\ \hline
\begin{tabular}{p{14cm}}
\textbf{Maximal subcoordinate vectors}\newline \newline
$(1;0,0,0;1,0,0;0)$,\ $(1;0,0,1;0,0,1;1)$,\ $(1;0,0,1;0,1,0;0)$,\  
$(1;0,1,0;0,0,1;0)$,\ \newline $(1;1,0,0;0,0,0;0)$,\ $(2;0,0,1;1,0,1;1)$,\  
$(2;0,0,1;1,1,0;0)$,\ $(2;0,0,2;0,0,1;2)$,\ \newline  $(2;0,0,2;0,1,1;1)$,\  
$(2;0,1,0;1,0,1;0)$,\ $(2;0,1,1;0,1,1;1)$,\ $(2;1,0,0;1,0,0;0)$,\  \newline 
$(2;1,0,1;0,0,1;1)$,\ $(2;1,0,1;0,1,0;1)$,\ $(2;1,1,0;0,0,1;0)$,\  
$(3;0,1,1;1,0,1;2)$,\ \newline  $(3;0,1,1;1,1,1;1)$,\ $(3;0,1,2;0,1,1;2)$,\  
$(3;1,0,1;1,0,1;1)$,\ $(3;1,0,1;1,1,0;1)$,\ \newline  $(3;1,0,2;0,1,1;2)$,\  
$(3;1,1,0;1,0,1;1)$,\ $(3;1,1,1;0,1,1;1)$.
\end{tabular}
\\ \hline
\end{longtable}

Poset $\Po_4$
\begin{longtable}{|@{}c@{}|}
\hline
\begin{tabular}{|p{6cm}|p{6cm}|}
\textbf{Representation}
\vspace*{0cm}%
\begin{tikzpicture}
  \node[draw, circle] (9) at (2,2.5) {$K^4$};
  \node (3) at (0,2) {$K_{1,2,3,4}$};
  \node (2) at (0,1) {$K_{3,4}$};
  \node (1) at (0,0) {$K_{4}$};
  \node (5) at (2,1) {$K_{12,234,4}$};
  \node (4) at (2,0) {$K_{234}$};
  \node (8) at (4,2) {$K_{1,2,3}$};
  \node (7) at (4,1) {$K_{1,2}$};
  \node (6) at (4,0) {$K_{1}$};
  \draw (1) -- (2) -- (3) -- (4) -- (5) -- (1);
  \draw (6) -- (7) -- (8);
\end{tikzpicture}
&
\textbf{Costability}
\begin{tikzpicture}
  \node[draw, circle] (9) at (2,2.5) {$-8$};
  \node (3) at (0,2) {$1$};
  \node (2) at (0,1) {$4$};
  \node (1) at (0,0) {$7$};
  \node (5) at (2,1) {$2$};
  \node (4) at (2,0) {$6$};
  \node (8) at (4,2) {$2$};
  \node (7) at (4,1) {$4$};
  \node (6) at (4,0) {$6$};
  \draw (1) -- (2) -- (3) -- (4) -- (5) -- (1);
  \draw (6) -- (7) -- (8);
\end{tikzpicture}
\end{tabular}
\\ \hline
\begin{tabular}{p{12cm}}
\textbf{Maximal subcoordinate vectors}\newline \newline
$(1;0,0,0;1,0;0,0,0)$,\ $(1;0,0,1;0,0;1,0,0)$,\ $(1;0,0,1;0,1;0,1,0)$,\  \newline
$(1;0,1,0;0,0;0,0,1)$,\ $(1;1,0,0;0,0;0,0,0)$,\ $(2;0,0,1;1,0;1,0,0)$,\  \newline
$(2;0,0,1;1,1;0,1,0)$,\ $(2;0,0,2;0,1;1,1,0)$,\ $(2;0,1,0;1,0;0,0,1)$,\  \newline
$(2;0,1,0;1,0;0,1,0)$,\ $(2;0,1,1;0,1;0,1,1)$,\ $(2;0,1,1;0,1;1,0,1)$,\  \newline
$(2;1,0,0;1,0;0,0,1)$,\ $(2;1,0,1;0,0;1,0,0)$,\ $(2;1,0,1;0,1;0,1,0)$,\  \newline
$(2;1,1,0;0,0;0,0,1)$,\ $(3;0,1,1;1,1;0,1,1)$,\ $(3;0,1,1;1,1;1,0,1)$,\  \newline
$(3;0,1,1;1,1;1,1,0)$,\ $(3;0,1,2;0,2;1,1,1)$,\ $(3;1,0,1;1,0;1,0,1)$,\  \newline
$(3;1,0,1;1,1;0,1,1)$,\ $(3;1,0,2;0,1;1,1,0)$,\ $(3;1,1,0;1,0;0,1,1)$,\  \newline
$(3;1,1,1;0,1;0,1,1)$,\ $(3;1,1,1;0,1;1,0,1)$.
\end{tabular}
\\ \hline
\end{longtable}

\newpage
Poset $\Po_5$.

\begin{longtable}{|@{}c@{}|}
\hline 
\begin{tabular}{p{6cm}|p{6cm}}
\textbf{Representation}
\begin{tikzpicture}
  \node[draw, circle] (9) at (0.8,3) {$K^5$};
  \node (2) at (0,1) {$K_{1,2,3,5}$};
  \node (1) at (0,0) {$K_{13,125}$};
  \node (4) at (2,1) {$K_{1,24,5}$};
  \node (3) at (2,0) {$K_{5}$};
  \node (8) at (4,3) {$K_{1,2,3,4,5}$};
  \node (7) at (4,2) {$K_{2,3,4}$};
  \node (6) at (4,1) {$K_{3,4}$};
  \node (5) at (4,0) {$K_4$};
  \draw (5) -- (6) -- (7) -- (8);
  \draw (4) -- (3) -- (8);
  \draw (1) -- (2) -- (3);
\end{tikzpicture}
&
\textbf{Costability}
\begin{tikzpicture}
  \node[draw, circle] (9) at (0.8,3) {$-10$};
  \node (2) at (0,1) {$2$};
  \node (1) at (0,0) {$6$};
  \node (4) at (2,1) {$4$};
  \node (3) at (2,0) {$9$};
  \node (8) at (4,3) {$1$};
  \node (7) at (4,2) {$4$};
  \node (6) at (4,1) {$6$};
  \node (5) at (4,0) {$8$};
  \draw (5) -- (6) -- (7) -- (8);
  \draw (4) -- (3) -- (8);
  \draw (1) -- (2) -- (3);
\end{tikzpicture}
\end{tabular}
\\ \hline
\begin{tabular}{p{12.5cm}}
\textbf{Maximal subcoordinate vectors}\newline \newline
$(1;0,0;0,0;1,0,0,0)$,\ $(1;0,0;0,1;0,0,1,0)$,\ $(1;0,0;1,0;0,0,0,0)$,\  \newline
$(1;0,1;0,0;0,0,1,0)$,\ $(1;0,1;0,0;0,1,0,0)$,\ $(1;0,1;0,1;0,0,0,1)$,\  \newline
$(1;1,0;0,0;0,0,0,1)$,\ $(2;0,0;1,0;1,0,0,0)$,\ $(2;0,0;1,1;0,0,1,0)$,\  \newline
$(2;0,1;0,0;1,1,0,0)$,\ $(2;0,1;0,1;0,1,1,0)$,\ $(2;0,1;0,1;1,0,0,1)$,\  \newline
$(2;0,1;0,1;1,0,1,0)$,\ $(2;0,1;0,2;0,0,1,1)$,\ $(2;0,1;1,0;0,0,1,0)$,\  \newline
$(2;0,1;1,0;0,1,0,0)$,\ $(2;0,1;1,1;0,0,0,1)$,\ $(2;0,2;0,0;0,1,1,0)$,\  \newline
$(2;0,2;0,1;0,0,1,1)$,\ $(2;1,0;0,1;1,0,0,1)$,\ $(2;1,0;1,0;0,0,0,1)$,\  \newline
$(2;1,1;0,1;0,0,0,2)$,\ $(2;1,1;0,1;0,0,1,1)$,\ $(2;1,1;0,1;0,1,0,1)$,\  \newline
$(2;2,0;0,0;0,0,0,2)$,\ $(3;0,1;1,0;1,1,0,0)$,\ $(3;0,1;1,1;0,1,1,0)$,\  \newline
$(3;0,1;1,1;1,0,0,1)$,\ $(3;0,1;1,1;1,0,1,0)$,\ $(3;0,1;1,2;0,0,1,1)$,\  \newline
$(3;0,2;0,1;1,1,1,0)$,\ $(3;0,2;0,2;1,0,1,1)$,\ $(3;1,0;1,1;1,0,0,1)$,\  \newline
$(3;1,1;0,1;1,1,0,1)$,\ $(3;1,1;0,2;0,0,1,2)$,\ $(3;1,1;0,2;0,1,1,1)$,\  \newline
$(3;1,1;0,2;1,0,0,2)$,\ $(3;1,1;0,2;1,0,1,1)$,\ $(3;1,1;1,0;0,1,1,0)$,\  \newline
$(3;1,1;1,1;0,0,1,1)$,\ $(3;1,1;1,1;0,1,0,1)$,\ $(3;1,2;0,1;0,1,1,1)$,\  \newline
$(3;2,0;0,1;1,0,0,2)$,\ $(3;2,0;1,0;0,0,1,1)$,\ $(3;2,1;0,1;0,0,1,2)$,\  \newline
$(3;2,1;0,1;0,1,0,2)$,\ $(4;1,1;1,1;1,1,0,1)$,\ $(4;1,1;1,1;1,1,1,0)$,\  \newline
$(4;1,1;1,2;0,1,1,1)$,\ $(4;1,1;1,2;1,0,1,1)$,\ $(4;1,2;0,2;1,1,1,1)$,\  \newline
$(4;2,0;1,1;1,0,1,1)$,\ $(4;2,1;0,2;0,1,1,2)$,\ $(4;2,1;0,2;1,0,1,2)$,\  \newline
$(4;2,1;0,2;1,1,0,2)$,\ $(4;2,1;1,1;0,1,1,1)$.
\end{tabular}
\\ \hline
\end{longtable}

\newpage

Poset $\Po_6$. 

\begin{longtable}{|@{}c@{}|}
\hline 
\begin{tabular}{p{6cm}|p{6cm}}
\textbf{Representation}
\begin{tikzpicture}
  \node[draw, circle] (9) at (0.8,2.6) {$K^5$};
  \node (2) at (0,1) {$K_{13,234,5}$};
  \node (1) at (0,0) {$K_{5}$};
  \node (4) at (2,1) {$K_{1,2,3,4,5}$};
  \node (3) at (2,0) {$K_{1,25}$};
  \node (8) at (4,3) {$K_{1,2,3,4}$};
  \node (7) at (4,2) {$K_{2,3,4}$};
  \node (6) at (4,1) {$K_{3,4}$};
  \node (5) at (4,0) {$K_4$};
  \draw (5) -- (6) -- (7) -- (8);
  \draw (5) -- (4) -- (3);
  \draw (4) -- (1) -- (2);
\end{tikzpicture}
&
\textbf{Costability}

\begin{tikzpicture}
  \node[draw, circle] (9) at (0.8,2.6) {$-10$};
  \node (2) at (0,1) {$4$};
  \node (1) at (0,0) {$8$};
  \node (4) at (2,1) {$1$};
  \node (3) at (2,0) {$6$};
  \node (8) at (4,3) {$2$};
  \node (7) at (4,2) {$4$};
  \node (6) at (4,1) {$6$};
  \node (5) at (4,0) {$9$};
  \draw (5) -- (6) -- (7) -- (8);
  \draw (5) -- (4) -- (3);
  \draw (4) -- (1) -- (2);
\end{tikzpicture}
\end{tabular}
\\ \hline
\begin{tabular}{p{12.5cm}}
\textbf{Maximal subcoordinate vectors}\newline \newline
$(1;0,0;0,0;1,0,0,0)$,\ $(1;0,0;0,1;0,1,0,0)$,\ $(1;0,0;1,0;0,0,0,1)$,\  \newline
$(1;0,1;0,1;0,0,0,1)$,\ $(1;0,1;0,1;0,0,1,0)$,\ $(1;1,0;0,0;0,0,0,0)$,\  \newline
$(2;0,0;0,1;1,1,0,0)$,\ $(2;0,0;1,0;1,0,0,1)$,\ $(2;0,0;2,0;0,0,0,1)$,\  \newline
$(2;0,1;0,1;1,0,0,1)$,\ $(2;0,1;0,1;1,0,1,0)$,\ $(2;0,1;0,2;0,1,1,0)$,\  \newline
$(2;0,1;1,1;0,0,1,1)$,\ $(2;0,1;1,1;0,1,0,1)$,\ $(2;0,2;0,2;0,0,1,1)$,\  \newline
$(2;1,0;0,0;1,0,0,0)$,\ $(2;1,0;0,1;0,1,0,0)$,\ $(2;1,0;1,0;0,0,0,1)$,\  \newline
$(2;1,0;1,0;0,0,1,0)$,\ $(2;1,1;0,1;0,0,0,1)$,\ $(2;1,1;0,1;0,0,1,0)$,\  \newline
$(3;0,1;0,2;1,1,1,0)$,\ $(3;0,1;1,1;1,0,1,1)$,\ $(3;0,1;1,1;1,1,0,1)$,\  \newline
$(3;0,1;2,0;1,0,0,1)$,\ $(3;0,1;2,1;0,0,1,1)$,\ $(3;0,1;2,1;0,1,0,1)$,\  \newline
$(3;0,2;0,2;1,0,1,1)$,\ $(3;0,2;0,3;0,1,1,1)$,\ $(3;0,2;1,2;0,1,1,1)$,\  \newline
$(3;1,0;0,1;1,1,0,0)$,\ $(3;1,0;1,0;1,0,0,1)$,\ $(3;1,0;1,0;1,0,1,0)$,\  \newline
$(3;1,0;2,0;0,0,1,1)$,\ $(3;1,1;0,1;1,0,0,1)$,\ $(3;1,1;0,1;1,0,1,0)$,\  \newline
$(3;1,1;0,2;0,1,1,0)$,\ $(3;1,1;1,1;0,0,1,1)$,\ $(3;1,1;1,1;0,1,0,1)$,\  \newline
$(3;1,1;1,1;0,1,1,0)$,\ $(3;1,2;0,2;0,0,1,1)$,\ $(4;0,2;1,2;1,1,1,1)$,\  \newline
$(4;0,2;2,1;1,0,1,1)$,\ $(4;0,2;2,1;1,1,0,1)$,\ $(4;0,2;2,2;0,1,1,1)$,\  \newline
$(4;1,1;1,1;1,1,0,1)$,\ $(4;1,1;1,1;1,1,1,0)$,\ $(4;1,1;2,0;1,0,1,1)$,\  \newline
$(4;1,1;2,1;0,1,1,1)$,\ $(4;1,2;1,1;1,0,1,1)$,\ $(4;1,2;1,2;0,1,1,1)$.
\end{tabular}
\\ \hline
\end{longtable}

\newpage

\section*{Appendix C. Sincere representations of finite representation type, their maximal subdimension vectors and stability conditions.}

For each non-primitive posets $\Po_1,\dots,\Po_6$ of finite representation type (given in Section 3.1) we list its sincere representations, stability condition (calculated by formulas \eqref{formulaCanStab}) and maximal subdimension vectors. We use the same notation as in Appendix B.

\

Poset $\Po_1$.

\begin{longtable}{|c|}
\hline 
\begin{tabular}{p{6cm}|p{6cm}}
\textbf{Representation}
\begin{tikzpicture}
  \node[draw, circle] (9) at (2,2) {$K^3$};
  \node (2) at (0,1) {$K_{1,23}$};
  \node (1) at (0,0) {$K_{123}$};
  \node (4) at (2,1) {$K_{1,2}$};
  \node (3) at (2,0) {$K_{1}$};
  \node (6) at (4,1) {$K_{2,3}$};
  \node (5) at (4,0) {$K_{3}$};
  \draw (1) -- (2) -- (3) -- (4);
  \draw (5) -- (6);
\end{tikzpicture}
&
\textbf{Stability}
\begin{tikzpicture}
  \node[draw, circle] (9) at (2,2) {$-6$};
  \node (2) at (0,1) {$1$};
  \node (1) at (0,0) {$2$};
  \node (4) at (2,1) {$2$};
  \node (3) at (2,0) {$1$};
  \node (6) at (4,1) {$2$};
  \node (5) at (4,0) {$2$};
  \draw (1) -- (2) -- (3) -- (4);
  \draw (5) -- (6);
\end{tikzpicture}
\end{tabular}
\\ \hline
\begin{tabular}{p{12.5cm}}
\textbf{Maximal subdimension vectors}\newline \newline
$(1;0,0;0,0;1,1)$,\ $(1;0,0;0,1;0,1)$,\ $(1;0,1;0,0;0,1)$,\ $(1;0,1;1,1;0,0)$,\  \newline
$(1;1,1;0,0;0,0)$,\ $(2;0,1;0,1;1,2)$,\ $(2;0,1;1,1;1,1)$,\ $(2;0,1;1,2;0,1)$,\  \newline
$(2;1,1;0,1;1,1)$,\ $(2;1,2;1,1;0,1)$
\end{tabular}
\\ \hline
\end{longtable}

Poset $\Po_2$.

\begin{longtable}{|c|}
\hline 
\begin{tabular}{p{6cm}|p{6cm}}
\textbf{Representation}
\begin{tikzpicture}
  \node[draw, circle] (9) at (2,2) {$K^3$};
  \node (2) at (0,1) {$K_{13,2,4}$};
  \node (1) at (0,0) {$K_{123,24}$};
  \node (4) at (2,1) {$K_{1,4}$};
  \node (3) at (2,0) {$K_{4}$};
  \node (7) at (4,2) {$K_{1,2,3}$};
  \node (6) at (4,1) {$K_{2,3}$};
  \node (5) at (4,0) {$K_{3}$};
  \draw (1) -- (2) -- (3) -- (4);
  \draw (5) -- (6) -- (7);
\end{tikzpicture}
&
\textbf{Stability}
\begin{tikzpicture}
  \node[draw, circle] (9) at (2,2) {$-7$};
  \node (2) at (0,1) {$1$};
  \node (1) at (0,0) {$3$};
  \node (4) at (2,1) {$3$};
  \node (3) at (2,0) {$1$};
  \node (7) at (4,2) {$2$};
  \node (6) at (4,1) {$2$};
  \node (5) at (4,0) {$2$};
  \draw (1) -- (2) -- (3) -- (4);
  \draw (5) -- (6) -- (7);
\end{tikzpicture}
\end{tabular}
\\ \hline
\begin{tabular}{p{14cm}}
\textbf{Maximal subdimension vectors}\newline \newline
$(1;0,0;0,0;1,1,1)$,\ $(1;0,0;0,1;0,0,1)$,\ $(1;0,1;0,0;0,1,1)$,\ $(1;0,1;1,1;0,0,0)$,\  \newline
$(1;1,1;0,0;0,0,1)$,\ $(2;0,1;0,0;1,2,2)$,\ $(2;0,1;0,1;1,1,2)$,\ $(2;0,1;1,1;1,1,1)$,\  \newline
$(2;0,1;1,2;0,0,1)$,\ $(2;1,1;0,0;1,1,2)$,\ $(2;1,1;0,1;0,1,2)$,\ $(2;1,2;0,0;0,1,2)$,\  \newline
$(2;1,2;1,1;0,1,1)$,\ $(2;2,2;0,0;0,0,1)$,\ $(3;1,2;0,1;1,2,3)$,\ $(3;1,2;1,1;1,2,2)$,\  \newline
$(3;1,2;1,2;1,1,2)$,\ $(3;2,2;0,1;1,1,2)$,\ $(3;2,3;1,1;0,1,2)$. 
\end{tabular}
\\ \hline
\end{longtable}

\newpage

\begin{longtable}{|c|}
\hline 
\begin{tabular}{p{6cm}|p{6cm}}
\textbf{Representation}
\begin{tikzpicture}
  \node[draw, circle] (9) at (2,2) {$K^3$};
  \node (2) at (0,1) {$K_{12,13,4}$};
  \node (1) at (0,0) {$K_{124,13}$};
  \node (4) at (2,1) {$K_{1,2,4}$};
  \node (3) at (2,0) {$K_{4}$};
  \node (7) at (4,2) {$K_{1,2,3}$};
  \node (6) at (4,1) {$K_{2,3}$};
  \node (5) at (4,0) {$K_{3}$};
  \draw (1) -- (2) -- (3) -- (4);
  \draw (5) -- (6) -- (7);
\end{tikzpicture}
&
\textbf{Stability}
\begin{tikzpicture}
  \node[draw, circle] (9) at (2,2) {$-8$};
  \node (2) at (0,1) {$1$};
  \node (1) at (0,0) {$3$};
  \node (4) at (2,1) {$3$};
  \node (3) at (2,0) {$2$};
  \node (7) at (4,2) {$2$};
  \node (6) at (4,1) {$2$};
  \node (5) at (4,0) {$2$};
  \draw (1) -- (2) -- (3) -- (4);
  \draw (5) -- (6) -- (7);
\end{tikzpicture}
\end{tabular}
\\ \hline
\begin{tabular}{p{14cm}}
\textbf{Maximal subdimension vectors}\newline \newline
$(1;0,0;0,0;1,1,1)$,\ $(1;0,0;0,1;0,1,1)$,\ $(1;0,1;0,1;0,0,1)$,\ $(1;0,1;1,1;0,0,0)$,\  \newline
$(1;1,1;0,0;0,0,1)$,\ $(1;1,1;0,1;0,0,0)$,\ $(2;0,1;0,1;1,2,2)$,\ $(2;0,1;0,2;0,1,2)$,\  \newline
$(2;0,1;1,1;1,1,1)$,\ $(2;0,1;1,2;0,1,1)$,\ $(2;1,1;0,1;1,1,2)$,\ $(2;1,1;0,2;0,1,1)$,\  \newline
$(2;1,2;0,1;0,1,2)$,\ $(2;1,2;1,2;0,0,1)$,\ $(2;2,2;0,1;0,0,1)$,\ $(3;1,2;0,2;1,2,3)$,\  \newline
$(3;1,2;1,2;1,2,2)$,\ $(3;1,2;1,3;0,1,2)$,\ $(3;2,2;0,2;1,1,2)$,\ $(3;2,3;1,2;0,1,2)$. 
\end{tabular}
\\ \hline
\end{longtable}

Poset $\Po_3$

\begin{longtable}{|@{}c@{}|}
\hline 
\begin{tabular}{p{6cm}|p{6cm}}
\textbf{Representation}
\vspace*{0cm}%
\begin{tikzpicture}
  \node[draw, circle] (9) at (3,3) {$K^4$};
  \node (3) at (0,2) {$K_{1,3,4}$};
  \node (2) at (0,1) {$K_{1,4}$};
  \node (1) at (0,0) {$K_{4}$};
  \node (6) at (2,2) {$K_{1,2,3}$};
  \node (5) at (2,1) {$K_{2,3}$};
  \node (4) at (2,0) {$K_{3}$};
  \node (7) at (4,0) {$K_{123,24}$};
  \draw (1) -- (2) -- (3) -- (4) -- (5) -- (6);
\end{tikzpicture}
&
\textbf{Stability}
\begin{tikzpicture}
  \node[draw, circle] (9) at (3,3) {$-8$};
  \node (3) at (0,2) {$1$};
  \node (2) at (0,1) {$2$};
  \node (1) at (0,0) {$2$};
  \node (6) at (2,2) {$2$};
  \node (5) at (2,1) {$2$};
  \node (4) at (2,0) {$1$};
  \node (7) at (4,0) {$4$};
  \draw (1) -- (2) -- (3) -- (4) -- (5) -- (6);
\end{tikzpicture}
\end{tabular}
\\ \hline
\begin{tabular}{p{14cm}}
\textbf{Maximal subdimension vectors}\newline \newline
$(1;0,0,0;0,0,1;1)$,\ $(1;0,0,1;1,1,1;0)$,\ $(1;0,1,1;0,0,1;0)$,\ $(1;1,1,1;0,0,0;0)$,\  \newline
$(2;0,0,1;0,0,1;2)$,\ $(2;0,0,1;1,1,2;1)$,\ $(2;0,0,1;1,2,2;0)$,\ $(2;0,1,1;0,1,2;1)$,\  \newline
$(2;0,1,2;1,1,2;0)$,\ $(2;1,1,1;0,1,1;1)$,\ $(2;1,1,2;1,1,1;0)$,\ $(2;1,2,2;0,0,1;0)$,\  \newline
$(3;0,1,2;1,1,2;2)$,\ $(3;0,1,2;1,2,3;1)$,\ $(3;1,1,2;0,1,2;2)$,\ $(3;1,1,2;1,2,2;1)$,\  \newline
$(3;1,2,3;1,1,2;1)$.
\end{tabular}
\\ \hline
\end{longtable}

\newpage
Poset $\Po_4$
\begin{longtable}{|@{}c@{}|}
\hline
\begin{tabular}{|p{6cm}|p{6cm}|}
\textbf{Representation}
\vspace*{0cm}%
\begin{tikzpicture}
  \node[draw, circle] (9) at (2,2.5) {$K^4$};
  \node (3) at (0,2) {$K_{1,23,4}$};
  \node (2) at (0,1) {$K_{123,4}$};
  \node (1) at (0,0) {$K_{4}$};
  \node (5) at (2,1) {$K_{1,2,4}$};
  \node (4) at (2,0) {$K_{14}$};
  \node (8) at (4,2) {$K_{1,2,3}$};
  \node (7) at (4,1) {$K_{2,3}$};
  \node (6) at (4,0) {$K_{3}$};
  \draw (1) -- (2) -- (3) -- (4) -- (5) -- (1);
  \draw (6) -- (7) -- (8);
\end{tikzpicture}
&
\textbf{Stability}
\begin{tikzpicture}
  \node[draw, circle] (9) at (2,2.5) {$-7$};
  \node (3) at (0,2) {$1$};
  \node (2) at (0,1) {$2$};
  \node (1) at (0,0) {$1$};
  \node (5) at (2,1) {$2$};
  \node (4) at (2,0) {$2$};
  \node (8) at (4,2) {$2$};
  \node (7) at (4,1) {$2$};
  \node (6) at (4,0) {$2$};
  \draw (1) -- (2) -- (3) -- (4) -- (5) -- (1);
  \draw (6) -- (7) -- (8);
\end{tikzpicture}
\end{tabular}
\\ \hline
\begin{tabular}{p{12cm}}
\textbf{Maximal subdimension vectors}\newline \newline
$(1;0,0,0;0,0;1,1,1)$,\ $(1;0,0,0;0,1;0,1,1)$,\ $(1;0,0,1;0,0;0,1,1)$,\  \newline
$(1;0,0,1;0,1;0,0,1)$,\ $(1;0,0,1;1,1;0,0,0)$,\ $(1;0,1,1;0,0;0,0,1)$,\  \newline
$(1;1,1,1;0,1;0,0,0)$,\ $(2;0,0,1;0,1;1,2,2)$,\ $(2;0,0,1;0,2;0,1,2)$,\  \newline
$(2;0,0,1;1,1;1,1,1)$,\ $(2;0,0,1;1,2;0,1,1)$,\ $(2;0,1,1;0,1;1,1,2)$,\  \newline
$(2;0,1,2;0,1;0,1,2)$,\ $(2;0,1,2;1,1;0,1,1)$,\ $(2;1,1,1;0,1;1,1,1)$,\  \newline
$(2;1,1,1;0,2;0,1,1)$,\ $(2;1,1,2;0,1;0,1,1)$,\ $(2;1,1,2;1,2;0,0,1)$,\  \newline
$(2;1,2,2;0,1;0,0,1)$,\ $(3;0,1,2;0,2;1,2,3)$,\ $(3;0,1,2;1,2;1,2,2)$,\  \newline
$(3;1,1,2;0,2;1,2,2)$,\ $(3;1,1,2;1,2;1,1,2)$,\ $(3;1,1,2;1,3;0,1,2)$,\  \newline
$(3;1,2,2;0,2;1,1,2)$,\ $(3;1,2,3;1,2;0,1,2)$. 
\end{tabular}
\\ \hline
\end{longtable}

Poset $\Po_5$. 

\begin{longtable}{|@{}c@{}|}
\hline 
\begin{tabular}{p{6cm}|p{6cm}}
\textbf{Representation}
\begin{tikzpicture}
  \node[draw, circle] (9) at (0.8,3) {$K^5$};
  \node (2) at (0,1) {$K_{1,2,4,5}$};
  \node (1) at (0,0) {$K_{15,4}$};
  \node (4) at (2,1) {$K_{123,24,5}$};
  \node (3) at (2,0) {$K_{5}$};
  \node (8) at (4,3) {$K_{1,2,3,5}$};
  \node (7) at (4,2) {$K_{1,2,3}$};
  \node (6) at (4,1) {$K_{2,3}$};
  \node (5) at (4,0) {$K_3$};
  \draw (5) -- (6) -- (7) -- (8);
  \draw (4) -- (3) -- (8);
  \draw (1) -- (2) -- (3);
\end{tikzpicture}
&
\textbf{Stability}
\begin{tikzpicture}
  \node[draw, circle] (9) at (0.8,3) {$-9$};
  \node (2) at (0,1) {$4$};
  \node (1) at (0,0) {$2$};
  \node (4) at (2,1) {$3$};
  \node (3) at (2,0) {$1$};
  \node (8) at (4,3) {$1$};
  \node (7) at (4,2) {$2$};
  \node (6) at (4,1) {$2$};
  \node (5) at (4,0) {$2$};
  \draw (5) -- (6) -- (7) -- (8);
  \draw (4) -- (3) -- (8);
  \draw (1) -- (2) -- (3);
\end{tikzpicture}
\end{tabular}
\\ \hline
\begin{tabular}{p{12.5cm}}
\textbf{Maximal subdimension vectors}\newline \newline
$(1;0,0;0,0;1,1,1,1)$,\ $(1;0,0;0,1;0,0,1,1)$,\ $(1;0,1;0,0;0,1,1,1)$,\  \newline
$(1;0,1;1,1;0,0,0,1)$,\ $(1;1,1;0,0;0,0,0,1)$,\ $(2;0,1;0,0;1,2,2,2)$,\  \newline
$(2;0,1;0,1;1,1,2,2)$,\ $(2;0,1;1,1;1,1,1,2)$,\ $(2;0,1;1,2;0,0,1,2)$,\  \newline
$(2;0,2;0,0;0,1,2,2)$,\ $(2;0,2;1,1;0,1,1,2)$,\ $(2;0,2;1,2;0,0,0,1)$,\  \newline
$(2;1,1;0,0;1,1,1,2)$,\ $(2;1,2;0,0;0,1,1,2)$,\ $(2;1,2;0,1;0,1,1,1)$,\  \newline
$(2;1,2;1,1;0,0,1,2)$,\ $(2;2,2;0,0;0,0,0,1)$,\ $(3;0,2;0,1;1,2,3,3)$,\  \newline
$(3;0,2;1,1;1,2,2,3)$,\ $(3;0,2;1,2;1,1,2,3)$,\ $(3;0,2;1,3;0,0,1,2)$,\  \newline
$(3;1,2;0,1;1,2,2,3)$,\ $(3;1,2;1,1;1,1,2,3)$,\ $(3;1,2;1,2;0,1,2,3)$,\  \newline
$(3;1,3;1,1;0,1,2,3)$,\ $(3;1,3;1,2;0,1,1,2)$,\ $(3;2,2;0,1;1,1,1,2)$,\  \newline
$(3;2,3;0,1;0,1,1,2)$,\ $(3;2,3;1,1;0,0,1,2)$,\ $(4;1,3;1,2;1,2,3,4)$,\  \newline
$(4;1,3;1,3;1,1,2,3)$,\ $(4;2,3;0,2;1,2,2,3)$,\ $(4;2,3;1,2;1,1,2,3)$,\  \newline
$(4;2,4;1,2;0,1,2,3)$. 
\end{tabular}
\\ \hline
\end{longtable}

Poset $\Po_6$. 

\begin{longtable}{|@{}c@{}|}
\hline 
\begin{tabular}{p{6cm}|p{6cm}}
\textbf{Representation}
\begin{tikzpicture}
  \node[draw, circle] (9) at (0.8,2.6) {$K^5$};
  \node (2) at (0,1) {$K_{1,2,5}$};
  \node (1) at (0,0) {$K_{5}$};
  \node (4) at (2,1) {$K_{13,23,4,5}$};
  \node (3) at (2,0) {$K_{134,235}$};
  \node (8) at (4,3) {$K_{1,2,3,4}$};
  \node (7) at (4,2) {$K_{2,3,4}$};
  \node (6) at (4,1) {$K_{3,4}$};
  \node (5) at (4,0) {$K_4$};
  \draw (5) -- (6) -- (7) -- (8);
  \draw (5) -- (4) -- (3);
  \draw (4) -- (1) -- (2);
\end{tikzpicture}
&
\textbf{Stability}

\begin{tikzpicture}
  \node[draw, circle] (9) at (0.8,2.6) {$-9$};
  \node (2) at (0,1) {$4$};
  \node (1) at (0,0) {$2$};
  \node (4) at (2,1) {$1$};
  \node (3) at (2,0) {$4$};
  \node (8) at (4,3) {$2$};
  \node (7) at (4,2) {$2$};
  \node (6) at (4,1) {$2$};
  \node (5) at (4,0) {$1$};
  \draw (5) -- (6) -- (7) -- (8);
  \draw (5) -- (4) -- (3);
  \draw (4) -- (1) -- (2);
\end{tikzpicture}
\end{tabular}
\\ \hline
\begin{tabular}{p{12.5cm}}
\textbf{Maximal subdimension vectors}\newline \newline
$(1;0,0;0,1;1,1,1,1)$,\ $(1;0,0;1,1;0,0,0,1)$,\ $(1;0,1;0,0;0,0,1,1)$,\  \newline
$(1;1,1;0,1;0,0,0,0)$,\ $(2;0,0;0,1;1,2,2,2)$,\ $(2;0,0;0,2;1,1,2,2)$,\  \newline
$(2;0,0;1,2;1,1,1,2)$,\ $(2;0,0;2,2;0,0,0,1)$,\ $(2;0,1;0,1;1,1,2,2)$,\  \newline
$(2;0,1;0,2;0,0,1,2)$,\ $(2;0,1;1,1;0,1,1,2)$,\ $(2;0,2;0,1;0,0,1,2)$,\  \newline
$(2;1,1;0,2;1,1,1,1)$,\ $(2;1,1;1,2;0,0,1,1)$,\ $(2;1,2;0,1;0,0,1,1)$,\  \newline
$(3;0,1;0,2;1,2,3,3)$,\ $(3;0,1;1,2;1,2,2,3)$,\ $(3;0,1;1,3;1,1,2,3)$,\  \newline
$(3;0,1;2,3;1,1,1,2)$,\ $(3;0,2;0,2;1,1,2,3)$,\ $(3;0,2;1,2;0,1,2,3)$,\  \newline
$(3;1,1;0,2;1,2,2,2)$,\ $(3;1,1;1,3;1,1,2,2)$,\ $(3;1,1;2,3;0,0,1,2)$,\  \newline
$(3;1,2;0,2;1,1,2,2)$,\ $(3;1,2;1,2;0,1,2,2)$,\ $(3;1,2;1,3;0,0,1,2)$,\  \newline
$(3;1,3;0,2;0,0,1,2)$,\ $(4;0,2;1,3;1,2,3,4)$,\ $(4;0,2;2,3;1,2,2,3)$,\  \newline
$(4;1,2;1,3;1,2,3,3)$,\ $(4;1,2;2,4;1,1,2,3)$,\ $(4;1,3;1,3;1,1,2,3)$.
\end{tabular}
\\ \hline
\end{longtable}

\newcommand{\etalchar}[1]{$^{#1}$}


\begin{thebibliography}{DHKK14}

\bibitem[BFK{\etalchar{+}}13]{BFKSY}
V.~Bondarenko, V.~Futorny, T.~Klimchuk, V.~Sergeichuk, and K.~Yusenko.
\newblock Systems of subspaces of a unitary space.
\newblock {\em Linear Algebra Appl.}, 438(5):2561--2573, 2013.

\bibitem[BM11]{BM11}
A.~Bayer and E.~Macr\`\i.
\newblock The space of stability conditions on the local projective plane.
\newblock {\em Duke Math. J.}, 160(2):263--322, 2011.

\bibitem[BPP16]{BPP16}
N.~Broomhead, D.~Pauksztello, and D.~Ploog.
\newblock Discrete derived categories {II}: the silting pairs {CW} complex and
  the stability manifold.
\newblock {\em J. Lond. Math. Soc. (2)}, 93(2):273--300, 2016.

\bibitem[Bri07]{Bridgeland07}
T.~Bridgeland.
\newblock Stability conditions on triangulated categories.
\newblock {\em Ann. of Math. (2)}, 166(2):317--345, 2007.

\bibitem[Bri17]{BR17}
T.~Bridgeland.
\newblock Scattering diagrams, {H}all algebras and stability conditions.
\newblock {\em Algebr. Geom.}, 4(5):523--561, 2017.

\bibitem[BST17]{BST17}
T.~Br\"{u}stle, D.~Smith, and H.~Treffinger.
\newblock Stability conditions, $\tau$-tilting theory and maximal green
  sequences.
\newblock 2017.
\newblock URL: \url{arXiv:1705.08227}.

\bibitem[CI18]{CI}
C.~Cavalcante and K.~Iusenko.
\newblock On dimension of poset variety.
\newblock {\em Linear Algebra Appl. In press.}, 2018.

\bibitem[DHKK14]{DHKK}
G.~Dimitrov, F.~Haiden, L.~Katzarkov, and M.~Kontsevich.
\newblock Dynamical systems and categories.
\newblock In {\em The influence of {S}olomon {L}efschetz in geometry and
  topology}, volume 621 of {\em Contemp. Math.}, pages 133--170. Amer. Math.
  Soc., Providence, RI, 2014.

\bibitem[dlPnS92]{dpSimson}
J.~A. de~la Pe\~na and D.~Simson.
\newblock Prinjective modules, reflection functors, quadratic forms, and
  {A}uslander-{R}eiten sequences.
\newblock {\em Trans. Amer. Math. Soc.}, 329(2):733--753, 1992.

\bibitem[Dol03]{Dolgachev}
I.~Dolgachev.
\newblock {\em Lectures on invariant theory}, volume 296 of {\em London
  Mathematical Society Lecture Note Series}.
\newblock Cambridge University Press, Cambridge, 2003.

\bibitem[Dro74]{Drozd}
Yu.~A. Drozd.
\newblock Coxeter transformations and representations of partially ordered
  sets.
\newblock {\em Funct. Anal. Appl.}, 8(3):219–225, 1974.

\bibitem[FJM08]{FJM08}
V.~Futorny, M.~Jardim, and A.~Moura.
\newblock On moduli spaces for abelian categories.
\newblock {\em Comm. Algebra}, 36(6):2171--2185, 2008.

\bibitem[Hal69]{Halmos}
P.~R. Halmos.
\newblock Two subspaces.
\newblock {\em Trans. Amer. Math. Soc.}, 144:381--389, 1969.

\bibitem[Har95]{Harris}
J.~Harris.
\newblock {\em Algebraic geometry}, volume 133 of {\em Graduate Texts in
  Mathematics}.
\newblock Springer-Verlag, New York, 1995.

\bibitem[HdlP02]{Hille}
L.~Hille and J.A. de~la Pe{\~n}a.
\newblock Stable representations of quivers.
\newblock {\em J. Pure Appl. Algebra}, 172(2-3):205--224, 2002.

\bibitem[Hu05]{Hu}
Y.~Hu.
\newblock Stable configurations of linear subspaces and quotient coherent
  sheaves.
\newblock {\em Q. J. Pure Appl. Math.}, 1(1):127--164, 2005.

\bibitem[Kin94]{King}
A.~D. King.
\newblock Moduli of representations of finite-dimensional algebras.
\newblock {\em Quart. J. Math. Oxford Ser. (2)}, 45(180):515--530, 1994.

\bibitem[Kle72]{Kleiner2}
M.~M. Kle{\u\i}ner.
\newblock Partially ordered sets of finite type.
\newblock {\em Zap. Nau\v cn. Sem. Leningrad. Otdel. Mat. Inst. Steklov.
  (LOMI)}, 28:32--41, 1972.
\newblock Investigations on the theory of representations.

\bibitem[KN79]{KempfNess}
G.~Kempf and L.~Ness.
\newblock The length of vectors in representation spaces.
\newblock In {\em Algebraic geometry ({P}roc. {S}ummer {M}eeting, {U}niv.
  {C}openhagen, {C}openhagen, 1978)}, volume 732 of {\em Lecture Notes in
  Math.}, pages 233--243. Springer, Berlin, 1979.

\bibitem[KNR06]{KruglyakNazarovaRoiter}
S.~A. Kruglyak, L.~A. Nazarova, and A.~V. Roiter.
\newblock Matrix problems in hilbert space.
\newblock 2006.
\newblock URL: \url{arXiv:math/0605728}.

\bibitem[Knu00]{Knu}
A.~Knutson.
\newblock The symplectic and algebraic geometry of {H}orn's problem.
\newblock {\em Linear Algebra Appl.}, 319(1-3):61--81, 2000.
\newblock Special Issue: Workshop on Geometric and Combinatorial Methods in the
  Hermitian Sum Spectral Problem (Coimbra, 1999).

\bibitem[KR05]{KruglyakRoiter}
S.~A.~Kruglyak and A.~V.~Roiter.
\newblock Locally scalar representations of graphs in the category of {H}ilbert
  spaces.
\newblock {\em Funct. Anal. Appl.}, 39(2):91--105, 2005.

\bibitem[KS14]{KS14}
M.~Kontsevich and Y.~Soibelman.
\newblock Wall-crossing structures in {D}onaldson-{T}homas invariants,
  integrable systems and mirror symmetry.
\newblock In {\em Homological mirror symmetry and tropical geometry}, volume~15
  of {\em Lect. Notes Unione Mat. Ital.}, pages 197--308. Springer, Cham, 2014.

\bibitem[Lad08]{ladkani}
S.~Ladkani.
\newblock On the periodicity of {C}oxeter transformations and the
  non-negativity of their {E}uler forms.
\newblock {\em Linear Algebra Appl.}, 428(4):742--753, 2008.

\bibitem[Mac07]{Macri07}
E.~Macr\`\i.
\newblock Stability conditions on curves.
\newblock {\em Math. Res. Lett.}, 14(4):657--672, 2007.

\bibitem[MFK94]{mfk}
D.~Mumford, J.~Fogarty, and F.~Kirwan.
\newblock {\em Geometric invariant theory}, volume~34.
\newblock Springer-Verlag, Berlin, third edition, 1994.

\bibitem[MS17]{MacriShmidt}
E.~Macr\`\i and B.~Schmidt.
\newblock Lectures on {B}ridgeland stability.
\newblock In {\em Moduli of curves}, volume~21 of {\em Lect. Notes Unione Mat.
  Ital.}, pages 139--211. Springer, Cham, 2017.

\bibitem[Rei08]{reineke}
M.~Reineke.
\newblock Moduli of representations of quivers.
\newblock In {\em Trends in representation theory of algebras and related
  topics}, EMS Ser. Congr. Rep., pages 589--637. Eur. Math. Soc., Z\"urich,
  2008.

\bibitem[Rud97]{Rud}
A.~Rudakov.
\newblock Stability for an abelian category.
\newblock {\em J. Algebra}, 197(1):231--245, 1997.

\bibitem[Sch91]{SC91}
A.~Schofield.
\newblock Semi-invariants of quivers.
\newblock {\em J. London Math. Soc. (2)}, 43(3):385--395, 1991.

\bibitem[Sch92]{sch}
A.~Schofield.
\newblock General representations of quivers.
\newblock {\em Proc. London Math. Soc. (3)}, 65(1):46--64, 1992.

\bibitem[Sim92]{SimsonB}
D.~Simson.
\newblock {\em Linear representations of partially ordered sets and vector
  space categories}, volume~4 of {\em Algebra, Logic and Applications}.
\newblock Gordon and Breach Science Publishers, Montreux, 1992.

\bibitem[Sim10]{simson}
D.~Simson.
\newblock Integral bilinear forms, {C}oxeter transformations and {C}oxeter
  polynomials of finite posets.
\newblock {\em Linear Algebra Appl.}, 433(4):699--717, 2010.

\bibitem[SY12]{SamYus}
Yu.~Samoilenko and K.~Yusenko.
\newblock Kleiner's theorem for unitary representations of posets.
\newblock {\em Linear Algebra Appl.}, 437(2):581--588, 2012.

\bibitem[WY13]{WY13}
T.~Weist and K.~Yusenko.
\newblock Unitarizable representations of quivers.
\newblock {\em Algebr. Represent. Theory}, 16(5):1349--1383, 2013.

\end{thebibliography}
\end{document}